\def\a{\alpha}
\def\b{\beta}
\def\d{\delta}
\def\s{\sigma}
\def\l{\lambda}
\def\L{\Lambda}
\def\g{\gamma}
\def\e{\epsilon}  
\def\oa{\mathop{\otimes}_A}
\def\ob{\mathop{\otimes}_B}
\theoremstyle{definition}
\newcommand{\Z}{\mathbb{Z}}
\newcommand{\ul}{\underline}
\newtheorem{theorem}{Theorem}[section]
\newtheorem{lemma}[theorem]{Lemma}
\newtheorem{example}[theorem]{Example}
\newtheorem{examples}[theorem]{Examples}
\newtheorem{definition}[theorem]{Definition}
\newtheorem{proposition}[theorem]{Proposition}
\newtheorem{remark}[theorem]{Remark}
\newtheorem{corollary}[theorem]{Corollary}
\newtheorem{problems}[theorem]{Problems}
\newtheorem{question}[theorem]{Question}
\begin{document}
\title{Peirce decompositions, idempotents and rings}

\author{P. N. \'Anh}
\address{R\'enyi Institute of Mathematics, Hungarian Academy of Sciences,
1364 Budapest, Pf.~127, Hungary}
\email{anh@renyi.hu}
\author{G. F. Birkenmeier}
\address{Department of Mathematics, University of Louisiana at Lafayette, Lafayette, LA 70504-1010, USA}
\email{gfb1127@louisiana.edu}
\author{L. van Wyk}
\address{Department of Mathematical Sciences, Stellenbosch University,
P/Bag X1, \hfill\break Matieland~7602, Stellenbosch, South Africa }
\email{LvW@sun.ac.za}
\thanks{Corresponding author: G. F. Birkenmeier   \\ \indent The first author was
supported by the National Research, Development and Innovation Office NKHIH K119934, FAPESP during his stay at the University of S\~ao Paulo, and by the Vietnamese Institute of Mathematics for his stay in Hanoi, Vietnam.  \\ \indent The third author was supported by the
National Research Foundation (of South Africa) under grant no.~UID 72375. Any
opinion, findings and conclusions or recommendations expressed in this
material are those of the authors and therefore the National Research
Foundation does not accept any liability in regard thereto.} 
\subjclass[2010]{16S50, 15A33, 16D20, 16D70}
\keywords{idempotent, Peirce decomposition, Peirce trivial, $n$-Peirce ring, generalized matrix ring, Morita context, {\bf J}-trivial, {\bf B}-trivial}
\date{Draft \today}

\begin{abstract}
Idempotents dominate the structure theory of rings. The Peirce decomposition induced by an idempotent provides a natural environment for defining and classifying new types of rings. This point of view offers a way to unify and to expand the classical theory of semiperfect rings and idempotents to much larger classes of rings. Examples and applications are included.
\end{abstract}

\maketitle

\section {Introduction}
\label{notions}

Since the coordinatization of projective and continuous geometries (see \cite{vn}), it is well-known that idempotents induce direct sum decompositions of regular representations which determine a structure of rings, provided that the rings have enough idempotents. This wealth of idempotents can be ensured if rings are proper matrix rings, i.e., they are $n$-by-$n$ for $n > 1$. An idempotent $e=e^2$ in a ring  $R$ not necessarily with unity induces  the (two-sided) Peirce decomposition
$$R=eRe\oplus eR(1-e)\oplus (1-e)Re\oplus (1-e)R(1-e),$$
or more transparently, $e$ induces on $R$ the generalized matrix ring structure
$$R= \left[ \begin{array}{cc}
eRe & eR(1-e) \\
(1-e)Re & (1-e)R(1-e) \end{array}\right],$$
with the obvious matrix addition and multiplication. Here $eRe$ ($=\{ere \  \vert \ 
r\in R\}$), $eR(1-e), \ (1-e)Re$ and $(1-e)R(1-e)$ are abelian subgroups of $R$, where the abbreviated notation $eR(1-e)$ stands formally for the set $\{e(r-re)=er-ere \ \vert \ r\in R\}$; and similarly, $(1-e)Re = \{re-ere \ \vert \ r\in R\}$, $(1-e)R(1-e)=\{r-er-re+ere \ \vert \ r\in R\}$.  Henceforth, there are generally two ways to treat idempotents concerning their structural influence. The first is an internal way given by the classical Peirce decompositions; the second way is an external one provided by generalized (or formal) matrix  rings. It is well known (e.g., see \cite{abivw1}) that with each Peirce decomposition, we can associate a generalized matrix ring; and with each generalized matrix ring, we can associate a Peirce decomposition.  Observe that a Morita context is 
a $2$-by-$2$ generalized matrix ring. Recall that a Morita context is a quadruple $(A, B, _A\negthinspace M_B, _B\negthinspace N_A)$ of rings $A$ and $B$ and bimodules $_AM_B$ and 
$_BN_A$, together with $(A,A)$- and $(B,B)$-bimodule homomorphisms

$$(-,-):M\ob N\longrightarrow _A\negthinspace A_A, \,\,\,\,\,\,\,\,\,\,\,\,\,\,\, [-,-]:N\oa M\longrightarrow _B\negthinspace B_B,$$

\noindent satisfying the conditions 

$$(m,n)m_1=m[n,m_1]\qquad{\rm and}\qquad [n,m]n_1=n(m,n_1) $$

\noindent of associativity for all $m, m_1\in M, \ n, n_1\in N.$ It is not necessary to require $A$ and~$B$ to be unital rings and $M$ and $N$ to be unitary bimodules. Consequently, every Morita context provides a generalized matrix ring 
$$R = \left[ \begin{array}{cc}
A & M \\
N & B \end{array}\right],$$
endowed with the usual matrix addition and multiplication by using bimodule homomorphimsms $(-,-), [-,-]$;
and vice versa in the case when at least one of~$A$ and $B$ is unital, by putting $e= \left[ \begin{array}{cc}
1 & 0 \\
0 & 0 \end{array}\right]$ or $e= \left[ \begin{array}{cc}
0 & 0 \\
0 & 1 \end{array}\right]$, one obtains a Peirce decomposition.

Generalized matrix rings, in particular Morita contexts, provide an efficient way to obtain rings with prescribed idempotents of a certain type. Then, using the prescribed idempotents to obtain Peirce decompositions, one can obtain further information about the rings.  For example, a ring with unity is a 2-by-2 generalized upper triangular matrix ring if and only if it has a left semicentral 
 idempotent which is neither 0 nor 1. Moreover, the Peirce decomposition may provide a means to unify a class of generalized matrix rings. For example, renumbering pairwise orthogonal idempotents 
leads to formally different generalized matrix rings which can be transformed from one to another by appropriate interchanging of rows and columns, respectively. However, the
associated Peirce decomposition is the same, because addition is commutative. 

The associativity condition imposed on Morita contexts is satisfied trivially if the considered bilinear products are trivial, i.e., zero. This naturally suggests  the notion of Peirce idempotents. An idempotent $e=e^2\in R$ is called \emph{Peirce trivial} if $eR(1-e)Re=0=(1-e)ReR(1-e)$ (see~\cite{abivw1}). By defining the class of rings which are indecomposable relative to the Peirce trivial concept (i.e., rings in which 0 and~1 are the only Peirce trivial idempotents) one obtains building blocks for a new decomposition theory (see Definition \ref{peirce}).  We refer to Peirce's original paper \cite{pe} for decompositions induced by idempotents. Other aspects and related properties of matrix and generalized matrix rings can be found also in \cite{abivw1},  \cite{bipar}, \cite{bodvw}, \cite{krt}, 
\cite{mamesvw} and \cite{s}. 

In this paper we devote our attention to the investigation of $n$-Peirce rings. In contrast to our other work in \cite{abivw1}, in this article we give a coordinatization-free treatment, i.e., we look for results which are independent of particular generalized matrix ring representations. In Section 2, the main result shows that one can develop a structure theory of Peirce rings similar to that of Bass for semiperfect rings. Thorough discussions on conditions weakening Peirce trivial idempotents can be found in~\cite{abivw1}. In Section 3, following the program suggested by Jacobson's classic (see~\cite{j}), we define so-called trivial idempotents relative to certain radicals, like {\bf J}-trivial and {\bf B}-trivial idempotents; and we sketch the process of how to lift results on semisimple factors to the rings. This is closely related to the classical theory of lifting idempotents modulo radicals. Applications of our theory are developed in the last section. In particular, we show that a variety of well known and useful conditions produce an $n$-Peirce ring with a generalized matrix representation whose diagonal rings are 1-Peirce rings which satisfy the respective condition.  Moreover, we provide many well known classes of rings for which an $n$-Peirce ring has a generalized matrix representation in which each diagonal ring is 1-Peirce and in the respective class.

A word about notation and convention: in the rest of this paper all rings are unital and all modules are unitary. When a ring $R$ with an idempotent $e^2=e\in R$ is viewed as a generalized matrix ring 
$R = \left[ \begin{array}{cc}
eRe & eR(1-e) \\
(1-e)Re & (1-e)R(1-e) \end{array}\right]$,
then the identity element of the rings $A=eRe$ and $B=(1-e)R(1-e)$ is 1 by convention, not $e$ or $1-e$, respectively. This convention will simplify and make routine calculations transparent. We consider an element $r = 1 \cdot r \cdot 1 = [e+(1-e)]r[e+(1-e)]$ both as a sum $r=ere+er(1-e)+(1-e)re+(1-e)r(1-e)$ and as a formal matrix $r=\left[ \begin{array}{cc}
ere & er(1-e) \\
(1-e)re & (1-e)r(1-e) \end{array}\right]$.

\medskip

\section{General structure theory}
\label{basicth}

For the sake of self-containment we provide the following definition (see \cite{abivw1}).

\begin{definition}
\label{peirce} An idempotent $e=e^2$ in a ring $R$ is called \emph{inner Peirce trivial} if $eR(1-e)Re=0$. Dually, $e$ is \emph{outer Peirce trivial} if $1-e$ is an inner Peirce trivial. An idempotent $e$ is \emph{Peirce trivial} if it is both inner and outer Peirce trivial. A ring~$R$ is a \emph{$0$-Peirce ring} if it has only one element $1=0$, and $R$ is called a \emph{Peirce ring}, or more precisely, a $1$\emph{-Peirce ring} if 0 and 1 (with $1\ne0$) are the only Peirce trivial idempotents of $R$. Inductively, for a natural number $n>1$, a ring~$R$ is called an \emph{$n$-Peirce ring}
if there is a Peirce trivial idempotent $e^2=e\in R$ such that $eRe$ is an $m$-Peirce ring for some 
$m, \ 1\leq m<n$, and $(1-e)R(1-e)$ is an $(n-m)$-Peirce ring. An idempotent 
$e\in R$ is called an 
\emph{n-Peirce idempotent} if $eRe$ is an $n$-Peirce ring. In particular,  $e=e^2\in R$ is called a \emph{1-Peirce idempotent} if $eRe$ is a $1$-Peirce ring.
\end{definition}

\begin{remark}
\label{recentral} Since all central idempotents are Peirce trivial, every 1-Peirce ring is indecomposable as a ring.  In particular, if a ring $R$ is semiprime or Abelian then both inner and outer Peirce trivial idempotents in $R$ are central; and such a ring $R$ is 1-Peirce if and only if $R$ is indecomposable as a ring. Recall that a ring is {\it Abelian} if its idempotents are central. Peirce trivial idempotents generalize the notion of semicentral idempotents which occur naturally in the structure of 2-by-2 generalized triangular matrix rings. For a natural number $n$, $n$-Peirce rings are generalizations of $n$-strongly triangular matrix rings (see \cite{avw2}), or in another terminology, rings with a complete set of triangulating idempotents (see~\cite{bihkipa}). For a thorough and subtle analysis of inner and outer Peirce idempotents, see~\cite{abivw1}. It is also worth noting that for an idempotent $e^2=e\in R$ the set $e+eR(1-e)$ is characterized in \cite[Part~II, Chapter~II, Lemma~2.7]{vn}  as the set of idempotents $f^2=f\in R$ such that $e$ and $f$ generate the same right ideal. 
\end{remark}

The following characterization (which is related to \cite[Corollary 3.6]{abivw1}) of Peirce trivial idempotents is obvious in view of Definition \ref{peirce}.

\begin{proposition}
\label{obvi1} Let $e=e^2\in R$ and $I=eR(1-e)+(1-e)Re$. Then $e$ is Peirce trivial if and only if $I$ is an ideal of $R$. 
\end{proposition}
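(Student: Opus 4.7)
The plan is to use the four-part Peirce decomposition $R = eRe \oplus eR(1-e) \oplus (1-e)Re \oplus (1-e)R(1-e)$ and compute $RI$ and $IR$ summand by summand, identifying exactly which cross-products fall outside $I$.

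First I would note that $I = eR(1-e) + (1-e)Re$ is automatically an additive subgroup, so the only issue is closure under left and right multiplication by arbitrary $r \in R$. Write $r = ere + er(1-e) + (1-e)re + (1-e)r(1-e)$. Multiplying $er'(1-e) \in eR(1-e)$ on the left by each of the four Peirce components of $r$, most products vanish by orthogonality, and what survives is a term in $eR(1-e)$ (harmless) plus a term of the form $(1-e)re \cdot er'(1-e) \in (1-e)ReR(1-e)$. Since the Peirce decomposition is a direct sum and $I \cap (1-e)R(1-e) = 0$, this surviving term lies in $I$ exactly when $(1-e)ReR(1-e) = 0$. An entirely parallel calculation with $r \cdot (1-e)r'e$ shows that $R \cdot (1-e)Re \subseteq I$ forces $eR(1-e)Re = 0$. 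So $RI \subseteq I$ if and only if $e$ is Peirce trivial. A symmetric computation for $IR$ yields the same two conditions.

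Combining both directions, $I$ is an ideal precisely when $eR(1-e)Re = 0 = (1-e)ReR(1-e)$, which is the definition of $e$ being Peirce trivial. The converse direction (Peirce trivial $\Rightarrow$ $I$ is an ideal) is then immediate from the same case analysis, since all the potentially obstructing products are hypothesized to vanish.

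No step is a serious obstacle: the proof is essentially bookkeeping on the sixteen products arising from multiplying the four Peirce components of $r$ against the two generating components of $I$. The only subtlety worth mentioning explicitly is invoking directness of the Peirce sum to conclude that terms landing in $eRe$ or $(1-e)R(1-e)$ must vanish in order for the result to belong to $I$, rather than merely being absorbed somewhere. Because the statement is labelled ``obvious'' by the authors, I would keep the write-up to a short paragraph that records the two generating computations and invokes symmetry for $IR$.
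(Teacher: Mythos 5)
Your proposal is correct, and it is exactly the routine Peirce-component bookkeeping that the authors have in mind when they declare the proposition ``obvious in view of Definition \ref{peirce}'' and omit the proof. The two key observations --- that the only surviving cross-products are $(1-e)ReR(1-e)$ and $eR(1-e)Re$, and that directness of the Peirce decomposition forces these to vanish rather than be absorbed into $I$ --- are precisely what make the equivalence work, so nothing is missing.
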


Direct matrix computations (see \cite{avw1} and \cite{avw2}) yield the following:

\begin{proposition}
\label{basic} Let $e=e^2\in R$ be a Peirce trivial idempotent, and put $A=eRe, \ B=(1-e)R(1-e), \ M=eR(1-e)$ and $N=(1-e)Re$. For arbitrary elements $m\in M$ and $n\in N$ the element $f = \left[ \begin{array}{cc}
1 & m \\
n & 0 \end{array}\right]$ is an idempotent, the rings $A$ and $ fRf=\left\{\left[ \begin{array}{cc} a & am \\
na & 0 \end{array}\right]: a\in A \right\}$ are isomorphic under the map
$\varphi$, sending
$a\in A$ to $\varphi(a) = \left[ \begin{array}{cc}
a & am \\
na & 0 \end{array}\right],$ and $B$ and 
$(1-f)R(1-f)=\left\{\left[ \begin{array}{cc} 0 & -mb \\
-bn & 1 \end{array}\right] : b\in B \right\}$ are isomorphic under the map
 $\varrho$, sending
$b\in B$ to $\varrho(b) = \left[ \begin{array}{cc}
0 & -mb \\
-bn & b \end{array}\right]$. Also, the modules $_RRe$ and $_RRf$ are isomorphic by sending $e\mapsto ef$ and $f\mapsto fe$. 
Moreover, $M=fR(1-f), \ N=(1-f)Rf$
and the identity maps on $M$ and $N$ are $(\varphi, \varrho)$-bimodule isomorphisms, i.e., for any $a\in eRe, \ b\in (1-e)R(1-e), \ x\in M$ and $y\in N$ one has 
$axb=\varphi(a)x\varrho(b)$ and $bya=\varrho(b)y\varphi(a)$. Consequently, $f$ is also a Peirce trivial idempotent in $R$.
\end{proposition}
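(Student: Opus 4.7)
The plan is to work inside the block-matrix picture from the introduction and to exploit Peirce triviality of $e$ in its two forms $eR(1-e)Re = 0$ and $(1-e)ReR(1-e) = 0$. These, together with the elementary orthogonality relations $M^2 = N^2 = 0$, $Me = eN = 0$, $MA = AN = 0$, $bM = Nb = 0$ for $b \in B$, and $MN = NM = 0$, form a short multiplication table for the four Peirce pieces $A, B, M, N$ that is the only ingredient needed throughout. With this table in hand, $f^2 = f$ is immediate: the off-diagonal entries $mn$ and $nm$ that show up in the square vanish by Peirce triviality.

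To show $\varphi$ is a ring isomorphism onto $fRf$, I would first compute $faf$ directly; the only entry that does not simplify trivially is the $(2,2)$-block $nam \in (1-e)ReR(1-e) = 0$, so $faf = \varphi(a)$. Multiplicativity reduces, after cancelling an $f$ on each side, to $a_1 f a_2 = a_1 a_2$, whose cross terms $a_1 m a_2$ and $a_1 n a_2$ vanish via $m a_2 = 0$ and $a_1 n = 0$. Injectivity is read off the $(1,1)$-entry, and surjectivity follows by decomposing an arbitrary $r = a + m_r + n_r + b$ and noting that $f m_r f$, $f n_r f$, and $f b f$ all vanish under repeated applications of the table. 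The treatment of $\varrho$ is parallel, with $mbn = 0$ as the key Peirce-trivial cancellation.

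For the module isomorphism, I would check $efe = e$ and $fef = f$ by direct expansion (the surviving summands in $fef$ are $e + em + ne = e + m + n = f$), so right multiplication by $f$ and by $e$ are mutually inverse left-$R$-linear maps $Re \leftrightarrow Rf$ sending $e \mapsto ef$ and $f \mapsto fe$. The identification $M = fR(1-f)$ goes in two steps. First, for $m' \in M$ the equalities $fm' = m'$ and $m'(1-f) = m'$ follow directly from the table, so $M \subseteq fR(1-f)$. Second, using $\varphi(a) = a + am + na$ and $\varrho(b) = b - mb - bn$ to substitute for $a$ and $b$ in the $e$-Peirce decomposition rewrites an arbitrary $r$ as an element of $\varphi(A) + \varrho(B) + M + N$; since this sum is contained in the direct $f$-Peirce decomposition with $\varphi(A) = fRf$, $\varrho(B) = (1-f)R(1-f)$, $M \subseteq fR(1-f)$, and $N \subseteq (1-f)Rf$, equality must hold in each Peirce slot. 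The bimodule identity $\varphi(a) x \varrho(b) = axb$ for $x \in M$ then collapses to $\varphi(a) y = ay$ and $y \varrho(b) = yb$ for $y \in M$, both of which follow from $fy = y = y(1-f)$. Finally, Peirce triviality of $f$ amounts to $MRf = 0 = NR(1-f)$: one checks $MR \subseteq MB \subseteq M$ and then $Mf = 0$ from $Me = Mm = Mn = 0$, and symmetrically for $N$.

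The main obstacle is purely bookkeeping rather than mathematical depth. One must record the multiplication table of the four Peirce pieces at the outset and keep it in view, so that each of the several assertions in the proposition collapses to a short calculation. Organizing the simultaneous re-identification of the Peirce slots under $f$, namely $fRf = \varphi(A)$, $(1-f)R(1-f) = \varrho(B)$, $M = fR(1-f)$, $N = (1-f)Rf$, cleanly and without re-deriving the basic vanishings each time is where the care is needed.
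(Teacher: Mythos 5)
Your proof is correct and follows exactly the route the paper intends: the paper gives no written argument beyond the remark that direct matrix computations (as in \cite{avw1}, \cite{avw2}) yield the result, and your systematic use of the multiplication table for $A,B,M,N$ --- with $MN=NM=0$ supplied by the two halves of Peirce triviality as the only non-formal inputs --- is precisely that computation carried out, including the direct-sum comparison that identifies the four $f$-Peirce slots. One cosmetic slip: the products $mn$ and $nm$ arising in $f^2$ land in the diagonal blocks $A$ and $B$ (they are products of the off-diagonal entries, not off-diagonal entries themselves), but they vanish for exactly the reason you give.
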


Simple formal calculations with matrices also show the following result.

\begin{lemma}
\label{calculate} If $e^2=e\in R$ and $g^2=g\in eRe$ are Peirce trivial idempotents in $R$ and $eRe$, respectively, then for any $m\in eR(1-e)$ and $n\in (1-e)Re$: \begin{enumerate}
\item the element 
$h = \left[ \begin{array}{cc}
g & gm \\
ng & 0 \end{array}\right]$ is an inner Peirce trivial idempotent in $R$; 
\item the rings
$gRg$ and $hRh$ are isomorphic; 
\item the modules $_RRg$ and $_RRh$ are isomorphic.
\end{enumerate}
\end{lemma}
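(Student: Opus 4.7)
The plan is to work throughout in the generalized matrix form induced by $e$, identifying any $r \in R$ with the matrix of its four Peirce components. The two hypotheses translate into two computational slogans: Peirce triviality of $e$ in $R$ gives $MN = 0$ in $A$ and $NM = 0$ in $B$ (where $A = eRe$, $B = (1-e)R(1-e)$, $M = eR(1-e)$, $N = (1-e)Re$), and inner Peirce triviality of $g$ in $A$ gives $g\, a_1 (1-g) a_2\, g = 0$ for all $a_1, a_2 \in A$. These are the only nontrivial facts needed; everything else is formal matrix bookkeeping.

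For part (1), I would first verify $h^2 = h$ by matrix multiplication: the only potentially offending cross terms are $(gm)(ng) \in MN$ and $(ng)(gm) \in NM$, both of which vanish, so $h^2 = h$ reduces to $g^2 = g$. To show $hR(1-h)Rh = 0$, I would compute $hr$ and $(1-h)sh$ for arbitrary $r, s \in R$ in matrix form; the identities $MN = 0 = NM$ eliminate all cross terms involving $m$ or $n$, and after multiplying $hr \cdot (1-h)sh$ every remaining entry acquires a left factor of the form $g\, a_1 (1-g) a_2\, g$, which is zero by inner Peirce triviality of $g$ in $eRe$. This yields $hR(1-h)Rh = 0$.

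For part (2), a direct matrix computation will show that an arbitrary $hrh$ has the form $\left[ \begin{array}{cc} a' & a'm \\ na' & 0 \end{array}\right]$ with $a' = g(ere)g$ ranging over all of $gRg$, again thanks to $MN = 0 = NM$ killing the stray terms. Mirroring Proposition~\ref{basic}, I would define $\varphi : gRg \to hRh$ by $\varphi(a') = \left[ \begin{array}{cc} a' & a'm \\ na' & 0 \end{array}\right]$; the same vanishing identities show $\varphi$ is a ring homomorphism sending $g$ to $h$, and bijectivity is immediate from reading off the top-left entry.

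For part (3), I would first record that $gh = g + gm$ and $hg = g + ng$, using $g(1-e) = 0 = (1-e)g$ (because $g \in eRe$); from these one easily obtains $ghg = g$ and $hgh = h$. Right multiplication by $h$ then defines an $R$-module map $\psi : Rg \to Rh$, whose two-sided inverse is right multiplication by $g$ viewed as a map $Rh \to Rg$. The main obstacle throughout is purely bookkeeping: keeping straight which cancellations come from the outer hypothesis (Peirce triviality of $e$ in $R$, via $MN = NM = 0$) and which from the inner hypothesis (Peirce triviality of $g$ in $eRe$, via $gA(1-g)Ag = 0$); no deeper argument is needed.
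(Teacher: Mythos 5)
Your computations are correct and are exactly the ``simple formal calculations with matrices'' the paper invokes without writing out: the identities $MN=0=NM$ from Peirce triviality of $e$, the vanishing of $gA(e-g)Ag$ from inner Peirce triviality of $g$, and the relations $ghg=g$, $hgh=h$ for part (3) are precisely what is needed, and your verification of each is sound. This matches the paper's (omitted) argument and the template of Proposition~\ref{basic}, so nothing further is required.
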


\begin{remark}
\label{warning}  It can seen from \cite[Example 3.9]{abivw1} that if $e$ is a Peirce trivial idempotent in $R$ and g is a Peirce trivial idempotent in $eRe$, then $g$ need not be Peirce trivial in $R$; but $g$ is inner Peirce trivial in $R$ (see \cite[Lemma 3.8(1)]{abivw1}). 
Observe that in general a product of two Peirce trivial idempotents is not even an idempotent.  Let $R$ be the 
     2-by-2 upper triangular matrix ring over a ring $A$.  Then $\left[ \begin{array}{cc} 1 & 0  \\ 0 & 0 \end{array}\right] 
		                                                             \left[ \begin{array}{cc} 0 & 1  \\ 0 & 1 \end{array}\right]$ 
is a product of Peirce trivial idempotents which is not an idempotent. 
\end{remark}

The following result provides basic properties of Peirce trivial idempotents.

\begin{proposition}
\label{ortho} Let $e\in R$ be a Peirce trivial idempotent in a ring $R$. Then any idempotent $f\in R=\left[ \begin{array}{cc}
A & M \\
N & B \end{array}\right],$ where $A=eRe,\, B=(1-e)R(1-e), \, M=eR(1-e)$ and $N=(1-e)Re$, can be written as a sum of two orthogonal idempotents $\a = \left[ \begin{array}{cc}
g & gm \\
ng & 0 \end{array}\right]$ and $\b = \left[ \begin{array}{cc}
0 & mh \\
hn & h \end{array}\right] \ $ ($f=\a+\b$ and ${\a}{\b}={\b}{\a}=0$) for appropriate $g^2=g\in A$,\, $h^2=h\in B,\, m\in M$ and $n\in N$. Furthermore,
\begin{enumerate}
\item $\a$ and $\b$ are Peirce trivial idempotents in the ring $fRf$; 
\item the modules $_RRf$ and $_RRf_e$, where $f_e=g+h$, are isomorphic;
\item $f$ is a Peirce trivial
idempotent of $R$ if and only if $f_e$ is a Peirce trivial idempotent of $R$.
\end{enumerate}
 Moreover, if $f$ is a Peirce trivial idempotent of $R$, then $g$ and $h$ are also Peirce trivial idempotents of $A$ and $B$, respectively. They are inner Peirce trivial idempotents of $R$, but not necessarily outer Peirce trivial idempotents of $R$. The same is true for both $\a$ and $\b$, i.e., they are inner Peirce trivial idempotents of~$R$.
\end{proposition}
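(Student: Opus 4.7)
The plan is to work throughout in the $2\times 2$ matrix representation of $R$ induced by $e$, exploiting the crucial consequences of the Peirce triviality of $e$, namely $MN=NM=0$. Writing $f$ in matrix form with diagonal entries $g\in A$, $h\in B$ and off-diagonals $m'\in M$, $n'\in N$, the identity $f^2=f$ forces $g^2=g$, $h^2=h$, $gm'+m'h=m'$, and $n'g+hn'=n'$; the potential obstructions $m'n'$ and $n'm'$ vanish automatically by Peirce triviality of $e$. Taking $m:=m'$ and $n:=n'$ and defining $\alpha,\beta$ as in the statement, the equation $f=\alpha+\beta$ is immediate. The idempotency of $\alpha$ and $\beta$ and the orthogonality $\alpha\beta=\beta\alpha=0$ reduce to matrix computations in which all off-diagonal cross terms lie in $MN=NM=0$, except for the $(1,2)$-entry of $\alpha\beta$ and the $(2,1)$-entry of $\beta\alpha$; for these one invokes the derived identities $gm'h=0$ and $hn'g=0$, obtained by right-multiplying $gm'+m'h=m'$ by $h$ (resp.\ $n'g+hn'=n'$ by $g$) and using $h^2=h$, $g^2=g$.

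For (1), since $\alpha+\beta=f=1_{fRf}$, $\alpha$ and $\beta$ are complementary idempotents in $fRf$, so it suffices to check $\alpha R\beta R\alpha=0=\beta R\alpha R\beta$. A short calculation shows that $\alpha R\beta$ sits in the $(1,2)$-corner with entries of the form $g(\cdot)h$, and right-multiplication by any element of $R\alpha$ annihilates these via $MN=0$; the symmetric argument handles $\beta R\alpha R\beta$. For (2), the orthogonality relations $\alpha\beta=\beta\alpha=0$ and $gh=hg=0$ supply decompositions $Rf=R\alpha\oplus R\beta$ and $Rf_e=Rg\oplus Rh$, reducing the task to producing isomorphisms ${}_RR\alpha\cong {}_RRg$ and ${}_RR\beta\cong {}_RRh$. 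These are given by the Murray--von Neumann style mutually inverse maps $g\mapsto g\alpha$, $\alpha\mapsto\alpha g$ (and similarly $h\mapsto h\beta$, $\beta\mapsto\beta h$); the identities $g\alpha g=g$ and $\alpha g\alpha=\alpha$ check directly, using only $NM=0$, which is the content of Lemma~\ref{calculate}(3).

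The main obstacle is part~(3): a direct entry-by-entry comparison of $fR(1-f)Rf$ with $f_eR(1-f_e)Rf_e$ leaves residual conditions involving $m',n'$ that do not obviously follow from their $f_e$-counterparts. The plan is to bypass this by applying part~(2) also to the complementary idempotent $1-f$, whose $e$-diagonal is precisely $(e-g)+((1-e)-h)=1-f_e$, yielding ${}_RR(1-f)\cong {}_RR(1-f_e)$ as well. Combining this with the canonical identifications $\mathrm{Hom}_R(Rf,R(1-f))\cong fR(1-f)$ and $\mathrm{Hom}_R(R(1-f),Rf)\cong (1-f)Rf$, under which composition of morphisms is realized by multiplication in $R$, the condition $fR(1-f)\cdot(1-f)Rf=0$ (equivalent to inner Peirce triviality of $f$) becomes the vanishing of the composition map on Hom-sets. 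This vanishing is invariant under the pair of module isomorphisms $Rf\cong Rf_e$, $R(1-f)\cong R(1-f_e)$; hence inner Peirce triviality transfers between $f$ and $f_e$. The outer condition is entirely symmetric.

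For the final assertion, if $f$ is Peirce trivial then by (3) so is $f_e$, and by Proposition~\ref{obvi1} the set $J=f_eR(1-f_e)+(1-f_e)Rf_e$ is an ideal of $R$. Compressing with $e$ on both sides gives $eJe=gA(1-g)+(1-g)Ag$, which is an ideal of $A$ (closure under $A$-multiplication follows from closure of $J$ under $R$-multiplication); Proposition~\ref{obvi1} applied inside $A$ now yields the Peirce triviality of $g$ in $A$, and symmetrically of $h$ in $B$. That $g$ and $h$, viewed as elements of $R$, are inner Peirce trivial in $R$ is Lemma~\ref{calculate}(1) with $m=n=0$, while the same lemma with $m=m'$, $n=n'$ yields the inner Peirce triviality of $\alpha$ and $\beta$ in $R$. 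The possible failure of outer Peirce triviality in $R$ is recorded in Remark~\ref{warning}.
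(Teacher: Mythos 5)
Your proof is correct. The setup and part (1) coincide with the paper's: extract $g,h,m,n$ from $f^2=f$, derive $gmh=0=hng$, and check idempotency, orthogonality and Peirce triviality of $\alpha,\beta$ in $fRf$ using $\alpha R\beta\subseteq M$, $\beta R\alpha\subseteq N$ and $MN=0=NM$. You genuinely diverge afterwards. For (2) the paper simply checks $f=ff_ef$ and $f_e=f_eff_e$ by one matrix multiplication, so that $ff_e$ and $f_ef$ are mutually inverse; your route through $Rf=R\alpha\oplus R\beta$ and $Rf_e=Rg\oplus Rh$ is longer but equally valid, and your direct verification of $g\alpha g=g$ and $\alpha g\alpha=\alpha$ is in fact necessary, since Lemma \ref{calculate} formally assumes $g$ to be Peirce trivial in $eRe$, which an arbitrary idempotent $f$ does not provide. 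For (3) the paper invokes Proposition \ref{basic}: $f$ and $f_e$ are mutual off-diagonal perturbations of one another, namely $f=f_e+(gm+hn)+(mh+ng)$ with the corrections lying in $f_eR(1-f_e)$ and $(1-f_e)Rf_e$, and symmetrically, and such perturbations preserve Peirce triviality. Your argument instead applies (2) to both $f$ and $1-f$ (whose diagonal part relative to $e$ is $1-f_e$), identifies ${\rm Hom}_R(Rf,R(1-f))$ with $fR(1-f)$ so that inner Peirce triviality becomes the vanishing of all composites $Rf\to R(1-f)\to Rf$, and notes that this vanishing is invariant under replacing the two modules by isomorphic copies; this is a correct and more conceptual alternative, resting on the identity $fR(1-f)Rf=(fR(1-f))\cdot((1-f)Rf)$, which holds because $1-f$ is idempotent and which you should state explicitly. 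It even yields slightly more than (3), namely that Peirce triviality of $f$ depends only on the isomorphism classes of $Rf$ and $R(1-f)$. For the final assertion, compressing the ideal $f_eR(1-f_e)+(1-f_e)Rf_e$ by $e$ and quoting Proposition \ref{obvi1} inside $A$ is a clean shortcut compared with the paper's direct computation of $gA(e-g)Ag=0=(e-g)AgA(e-g)$; your appeal to Lemma \ref{calculate} for the inner Peirce triviality of $g$, $h$, $\alpha$ and $\beta$ in $R$ matches the paper.
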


\begin{proof} Since $f$ can be written uniquely as the generalized matrix $f = \left[ \begin{array}{cc}
g & m \\
n & h \end{array}\right]$ for uniquely determined elements $g\in A, \ h\in B, \ m\in M$ and $n\in N$, the equality
$f^2=f$ implies that 
$$g^2=g, \ \ h^2=h, \ \ m=gm+mh \ \ {\rm and} \ \ n=ng+hn,$$ 
which in turn implies that 
$$gmh=0 \ \ {\rm and} \ \ hng=0.$$
Let
$$\a = \left[ \begin{array}{cc}
g & gm \\
ng & 0 \end{array}\right] \ \ {\rm and} \ \ \b = \left[ \begin{array}{cc}
0 & mh \\
hn & h \end{array}\right].$$
Then one can verify directly that $\a={\a}^2, \ \b={\b}^2, \ f=\a+\b$ and ${\a}{\b}={\b}{\a}=0.$ 

To see that $\a$ and $\b$ are Peirce trivial idempotents of $fRf$, one has to verify that ${\a}fRf{\b}fRf{\a}={\a}R{\b}R{\a}=0={\b}fRf{\a}fRf{\b}={\b}R{\a}R\b$, which is obvious by observing the inclusions ${\a}R\b\subseteq M$ and ${\b}R\a\subseteq N.$ The modules $_RRf$ and~$_RRf_e$ are isomorphic by the equalities $f=ff_ef$ and $f_e=f_eff_e.$ Since $f=f_e+(gm+hn)+(mh+ng), \ gm+hn\in f_eR(1-f_e), \ mh+ng\in (1-f_e)Rf_e, \ f_e=f-(gm+hn)-(mh+ng), \ gm+hn\in fR(1-f)$ and $mh+ng\in (1-f)Rf$, it follows immediately in view of Proposition \ref{basic} that $f$ is Peirce trivial if and only if $f_e$ is such.

Assume now in addition that $f$ is a Peirce trivial idempotent of $R$. The idempotent $e$ is now the identity $1_A$ of $A$, i.e., $e=\left[ \begin{array}{cc}
1 & 0 \\
0 & 0 \end{array}\right],$ and similarly $1-e= \left[ \begin{array}{cc}
0 & 0\\
0 & 1 \end{array}\right]$ is the identity $1_B$ of $B$. The equality
$0=fR(1-f)Rf=(\a+\b)R(1-f)R(\a+\b)$ implies that $0={\a}eRe(1-f)eRef=gA(e-g)Afe=aA(e-g)Ag$, whence $g$ is an inner Peirce trivial idempotent of $A=eRe$.
On the other hand, the equality $0=(1-f)RfR(1-f)$ shows thst
$0=(1-f)eRefeRe(1-f)=(1-f)eAgAe(1-f)$, from which $0=e(1-f)eAgAe(1-f)e=
(e-g)AgA(e-g)$ follows. Therefore, $g$ is also an outer Peirce trivial idempotent of~$A$. Consequently, $g$ is indeed a Peirce trivial idempotent of $A$. By symmetry, $h$ is also a Peirce trivial idempotent of 
$B=(1-e)R(1-e)$.
The remaining assertions are now simply consequences of~Lemma~\ref{calculate}. 
\end{proof}

As an obvious consequence of Proposition \ref{ortho} and Remark \ref{warning}, routine matrix multiplication shows that: 

\begin{corollary}
\label{product} In the notation of Proposition \ref{ortho}, the products $efe$ and $(1-e)f(1-e)$ of a Peirce trivial idempotent 
$e$ of $R$ and an idempotent $f\in R$ are the idempotents $g$ and $h$ of $R$, respectively. Moreover, in the case of a Peirce trivial idempotent $f$, the idempotents $g$ and $h$ are inner Peirce trivial, but not necessarily outer Peirce trivial idempotents of $R$.
\end{corollary}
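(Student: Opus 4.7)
The plan is to recognize this corollary as a packaging of Proposition \ref{ortho}, Lemma \ref{calculate}, and the example flagged in Remark \ref{warning}. Under the matrix representation in which $e = \left[\begin{array}{cc} 1 & 0 \\ 0 & 0 \end{array}\right]$ and $1-e = \left[\begin{array}{cc} 0 & 0 \\ 0 & 1 \end{array}\right]$, Proposition \ref{ortho} writes $f$ uniquely as $\left[\begin{array}{cc} g & m \\ n & h \end{array}\right]$ with $g \in A$, $h \in B$, $m \in M$ and $n \in N$. Direct matrix multiplication yields $efe = \left[\begin{array}{cc} g & 0 \\ 0 & 0 \end{array}\right]$ and $(1-e)f(1-e) = \left[\begin{array}{cc} 0 & 0 \\ 0 & h \end{array}\right]$, which under the identification of the corners with $A$ and $B$ (per the convention fixed in the Introduction) are precisely $g$ and $h$.

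Now assume in addition that $f$ is Peirce trivial in $R$. Then Proposition \ref{ortho} already asserts that $g$ is a Peirce trivial idempotent of $A = eRe$. I would next invoke Lemma \ref{calculate}(1) with $m = 0$ and $n = 0$: the lemma takes a Peirce trivial idempotent of $R$ (namely $e$) together with a Peirce trivial idempotent of $eRe$ (namely $g$) and produces an inner Peirce trivial idempotent of $R$ of the form $\left[\begin{array}{cc} g & gm \\ ng & 0 \end{array}\right]$. With the trivial choices $m = n = 0$ this element collapses to $g$ itself, so $g$ is inner Peirce trivial in $R$. Running the symmetric argument with the pair $(1-e,\, h)$ in place of $(e,\, g)$ — which is legitimate because the hypotheses of Proposition \ref{ortho} and Lemma \ref{calculate} are self-dual under the interchange of $e$ and $1-e$ — gives that $h$ is inner Peirce trivial in $R$.

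For the negative claim that $g$ and $h$ need not be outer Peirce trivial in $R$, I would simply appeal to the example already singled out in Remark \ref{warning}, namely \cite[Example~3.9]{abivw1}. There one has a Peirce trivial $e$ in $R$ and a Peirce trivial $g$ in $eRe$ such that $g$ fails to be Peirce trivial in $R$; by the inner-triviality just established, the failure must lie on the outer side. There is no real obstacle — the corollary announces itself as a consequence of routine matrix multiplication — and the only bookkeeping worth flagging is uniqueness of the decomposition in Proposition \ref{ortho}, which guarantees that $g = efe$ and $h = (1-e)f(1-e)$ are unambiguously the diagonal corners of $f$.
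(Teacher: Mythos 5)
Your proposal is correct and follows essentially the same route as the paper, which states the corollary as ``an obvious consequence of Proposition \ref{ortho} and Remark \ref{warning}'' via routine matrix multiplication: you identify $g=efe$ and $h=(1-e)f(1-e)$ as the diagonal corners, obtain inner Peirce triviality from Lemma \ref{calculate} (exactly the paper's mechanism, since the proof of Proposition \ref{ortho} also defers its final assertions to that lemma), and cite the example of Remark \ref{warning} for the failure of outer Peirce triviality. No gaps relative to the paper's own level of detail.
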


To justify Definition \ref{peirce}, one has to show that $n$ is an invariant of an $n$-Peirce ring, i.e, $n$ does not depend on the choice of Peirce trivial idempotents in $R$. This fact is shown in the following result.

\begin{theorem}
\label{main} Let $R$ be an $n$-Peirce ring and $f^2=f\in R$ be an arbitrary Peirce trivial idempotent. Then $fRf$ is a $k$-Peirce ring for some $k\leq n$, and $(1-f)R(1-f)$  is  an  $(n-k)$-Peirce ring.
\end{theorem}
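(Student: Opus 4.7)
The plan is to proceed by strong induction on $n$. The base case $n=1$ is immediate, since Definition~\ref{peirce} forces $f \in \{0,1\}$, and the conclusion reads off directly with $k \in \{0,1\}$.

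For the inductive step, fix a witness $e$ making $R$ an $n$-Peirce ring, so that $A := eRe$ is $m$-Peirce and $B := (1-e)R(1-e)$ is $(n-m)$-Peirce with $1 \le m < n$. Given the arbitrary Peirce trivial idempotent $f$, I first transfer the question from $f$ to its ``diagonal part'' via Proposition~\ref{ortho}: set $g := efe$, $h := (1-e)f(1-e)$, and $f_e := g+h$. Proposition~\ref{ortho} asserts that $g$ and $h$ are Peirce trivial in $A$ and $B$, respectively, and that ${}_RRf \cong {}_RRf_e$ as left $R$-modules. Applying the same proposition to the Peirce trivial idempotent $1-f$ (whose diagonal part is $1-f_e$) yields ${}_RR(1-f) \cong {}_RR(1-f_e)$. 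Since the endomorphism ring of ${}_RRe'$ is $(e'Re')^{\mathrm{op}}$ for any idempotent $e'$, these module isomorphisms promote to ring isomorphisms $fRf \cong f_eRf_e$ and $(1-f)R(1-f) \cong (1-f_e)R(1-f_e)$, reducing the problem to analysing the ``block diagonal'' idempotent $f_e$.

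Next I invoke the inductive hypothesis: applied to $g$ in the $m$-Peirce ring $A$ (with $m<n$) and to $h$ in the $(n-m)$-Peirce ring $B$ (with $n-m<n$), it supplies integers $0 \le k_1 \le m$ and $0 \le k_2 \le n-m$ such that $gAg$ is $k_1$-Peirce, $(e-g)A(e-g)$ is $(m-k_1)$-Peirce, $hBh$ is $k_2$-Peirce and $(1-e-h)B(1-e-h)$ is $(n-m-k_2)$-Peirce. Setting $k := k_1 + k_2 \le n$, I claim $f_eRf_e$ is $k$-Peirce and $(1-f_e)R(1-f_e)$ is $(n-k)$-Peirce, which finishes the proof.

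The heart of the argument is that $g$ is a Peirce trivial idempotent of the ring $f_eRf_e$ with complement $h = f_e - g$. Using $gf_e = g = f_eg$ and $hf_e = h = f_eh$, the two Peirce conditions for $g$ in $f_eRf_e$ collapse to $gRhRg = 0$ and $hRgRh = 0$ in $R$; these follow from the inclusions $MN \subseteq eR(1-e)Re = 0$ and $NM \subseteq (1-e)ReR(1-e) = 0$, where $M = eR(1-e)$, $N = (1-e)Re$, themselves consequences of $e$ being Peirce trivial in $R$. Since the corner rings $g(f_eRf_e)g$ and $h(f_eRf_e)h$ equal $gAg$ and $hBh$, Definition~\ref{peirce} makes $f_eRf_e$ a $k$-Peirce ring. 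The identical argument applied to $e-g$ inside $(1-f_e)R(1-f_e)$ with complement $1-e-h$ gives the second assertion. The main obstacle is this Peirce-triviality verification together with the bookkeeping in the degenerate cases $g \in \{0,e\}$ or $h \in \{0,1-e\}$, where one summand collapses and the recursive clause of Definition~\ref{peirce} must be reinterpreted so that the index still adds up correctly.
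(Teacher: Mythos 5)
Your proof is correct and follows essentially the same route as the paper: induction on $n$ via a fixed witness $e$, Proposition~\ref{ortho} to split $f$ into diagonal parts $g=efe$ and $h=(1-e)f(1-e)$, and the induction hypothesis applied to $g$ in $A$ and $h$ in $B$, with the dimensions adding. The only (cosmetic) difference is that the paper realizes $gAg$ and $hBh$ as the corners $\a R\a$ and $\b R\b$ inside $fRf$ by explicit matrix isomorphisms, whereas you pass to $f_e=g+h$ via the ring isomorphism $fRf\cong f_eRf_e$ coming from ${}_RRf\cong{}_RRf_e$ and verify Peirce triviality of $g$ in $f_eRf_e$ directly; both are sound, and your flagged degenerate cases ($g\in\{0,e\}$ or $h\in\{0,1-e\}$) are handled trivially since the corresponding corner ring is then all of $f_eRf_e$ or zero.
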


\begin{proof}
We use induction. The case $n=1$ is obvious from Definition \ref{peirce}. Assume now that $n>1$ and that the theorem is true for all $m<n$. Consider  an $n$-Peirce ring~$R$ defined by a Peirce trivial idempotent $e^2=e\in R$ such that $eRe$ is an $m$-Peirce ring $(1\leq m<n)$ and $(1-e)R(1-e)$ is an $(n-m)$-Peirce ring. For simplifying calculations put $A=eRe, \ M=eR(1-e), \ N=(1-e)Re$ and $B=(1-e)R(1-e)$, and write the elements of $R$ as generalized matrices $r = \left[ \begin{array}{cc}
a & m \\
n & b \end{array}\right]$. Therefore, if $f^2=f\in R$ is an arbitrary Peirce trivial idempotent in $R$, then in view of Proposition \ref{ortho}, for the unique generalized matrix representation $f = \left[ \begin{array}{cc}
g & m \\
n & h \end{array}\right],$ with uniquely determined elements $g\in A, \ h\in B, \ m\in M$ and $n\in N,$ by putting
$\a = \left[ \begin{array}{cc}
g & gm \\
ng & 0 \end{array}\right]$ and $\b = \left[ \begin{array}{cc}
0 & mh \\
hn & h \end{array}\right]$, one has that $f=\a+\b, \ {\a}{\b}={\b}{\a}=0,$ $\a$ and $\b$ are Peirce trivial idempotent of $fRf$, and $g$ and $h$ are Peirce trivial idempotents of $A$ and $B$, respectively. Without loss of generality, we may assume that $f\neq 0, 1.$

By the induction hypothesis applied to both $A$ and $B$, the rings $gAg$ and $hBh$ are $p$- and $q$-Peirce rings for some $p, \ 0\leq p\leq m$, and some $q, \ 0\leq q\leq n-m$, respectively, such that at least one of the two inequalities is proper by the extra assumption on~$f$. For the sake of simplicity, by putting $t=gm=gt, \ u=ng=ug, \ v=mh=vh$ and $w=hn=hw$, one has

$$\a = \left[ \begin{array}{cc}
g & t \\
u & 0 \end{array}\right] \quad {\rm and} \quad \b = \left[ \begin{array}{cc}
0 & v \\
w & g \end{array}\right].$$
Routine matrix calculations show that 
$${\a}fRf{\a}={\a}R{\a} = \left\{\left[ \begin{array}{cc}
gag & gat \\
uag & 0 \end{array}\right]\colon \  gag\in gAg\right\}$$
and
$$ (f-\a)fRf(f-\a)={\b}R{\b} = \left\{\left[ \begin{array}{cc}
0 & vbh \\
hbw & hbh \end{array}\right]\colon \ hbh\in hBh\right\},$$ 
whence ${\a}R{\a}$ and ${\b}R{\b}$ are isomorphic to $gAg$ and $hBh$ via the maps

$$\left[ \begin{array}{cc}
gag & gat \\
uag & 0 \end{array}\right] \longmapsto gag\in gAg$$
and
$$\left[ \begin{array}{cc}
0 & vbh \\
hbw & hbh \end{array}\right]\longmapsto hbh\in hBh,$$ 
respectively. Consequently $fRf$ is an $(p+q)$-Peirce ring.  Moreover, the left $R$-modules $_RRg$ and $_RR\a$ are isomorphic, as are $_RRh$ and $_RR\b$. By the same manner and by the induction hypothesis we have also that $(1-f)R(1-f)$ is an $(n-p-q)$-Peirce ring, completing the proof.
\end{proof}

Since an idempotent is either Peirce trivial or not Peirce trivial, Theorem \ref{main} suggests the following dichotomy.

\begin{definition}
\label{peircedim}
A ring $R$ has {\it Peirce dimension} 0 if it has only one element~$1=0$, and $R$ has {\it Peirce dimension} $n \ (n>0)$ if it is an $n$-Peirce ring. All other rings are said to have {\it infinite Peirce dimension}.
\end{definition}

As an obvious consequence of Definition \ref{peircedim} and Theorem \ref{main} we have:

\begin{corollary}
\label{additive} The Peirce dimension is additive, i.e., the Peirce dimension of a finite direct sum of rings is the sum of the Peirce dimensions of the direct summands. In particular, if $e\in R$ is an arbitrary Peirce trivial idempotent, then the Peirce dimension of $R$ is the sum of the Peirce dimensions of $eRe$ and
$(1-e)R(1-e)$.
\end{corollary}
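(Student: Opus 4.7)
The plan is to derive both assertions directly from Theorem \ref{main} with only a simple induction in between. The ``in particular'' clause is essentially a restatement of Theorem \ref{main} in the language of Definition \ref{peircedim}: if $R$ has finite Peirce dimension $n$ and $e \in R$ is a Peirce trivial idempotent, Theorem \ref{main} provides some $k \leq n$ for which $eRe$ is $k$-Peirce and $(1-e)R(1-e)$ is $(n-k)$-Peirce, which reads $\dim R = \dim eRe + \dim(1-e)R(1-e)$. The degenerate cases $e = 0$ and $e = 1$ are trivial.

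For the direct sum statement I would induct on the number of summands $t$. Given $R = R_1 \oplus R_2 \oplus \cdots \oplus R_t$ with $t \geq 2$, the element $e = (1_{R_1}, 0, \ldots, 0) \in R$ is a central idempotent, hence Peirce trivial by Remark \ref{recentral}, and the corner rings $eRe$ and $(1-e)R(1-e)$ are canonically isomorphic to $R_1$ and $R_2 \oplus \cdots \oplus R_t$, respectively. Applying the ``in particular'' clause yields $\dim R = \dim R_1 + \dim(R_2 \oplus \cdots \oplus R_t)$, and the inductive hypothesis on the second summand closes the argument. The base case $t = 1$ is vacuous, and the degenerate case $R_1 = 0$ collapses safely, since then $e = 0$ and $\dim R_1 = 0$.

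The one point requiring a brief remark is the case when some summand, equivalently $R$ itself, has infinite Peirce dimension. If $\dim R_i = \infty$ for some $i$, then $\dim R < \infty$ is impossible: otherwise the central Peirce trivial idempotent cutting out $R_i$ would, by the ``in particular'' clause, force $\dim R_i \leq \dim R < \infty$, a contradiction. So both sides of the asserted equality are $\infty$ under the natural convention. I do not expect any real obstacle here; Theorem \ref{main} performs all the structural work, and the corollary is essentially a bookkeeping exercise distinguishing the finite and infinite cases.
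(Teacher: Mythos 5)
Your proposal is correct and follows exactly the route the paper intends: the paper states this corollary as "an obvious consequence of Definition \ref{peircedim} and Theorem \ref{main}" without further proof, and your argument simply fills in that bookkeeping (the idempotent clause as a direct restatement of Theorem \ref{main}, the direct-sum clause by induction via central idempotents, and a correct handling of the infinite-dimension case).
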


The following consequence deals with rings of infinite Peirce dimension.

\begin{corollary}
\label{infinitedim} A ring $R$ has infinite Peirce dimension if and only if there is an infinite set of nonzero pairwise distinct idempotents 
$e_0=1, e_1, \ldots, e_n, \ldots $ such that $e_{i+1}$ is a $1$-Peirce idempotent of $e_iRe_i$ for every $i=0, 1, 2, \ldots \ $.
\end{corollary}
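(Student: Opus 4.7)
My plan is to establish both directions by repeatedly invoking Definition~\ref{peirce} together with the additivity of Peirce dimension in Corollary~\ref{additive}, using the lifting results from Proposition~\ref{basic}, Proposition~\ref{ortho} and Lemma~\ref{calculate} as needed.

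For the forward implication, I assume $R$ has infinite Peirce dimension and construct the sequence recursively. Set $e_0 = 1$. Given $e_i$ so that $e_iRe_i$ has infinite Peirce dimension, note that $e_iRe_i$ is in particular not $1$-Peirce, so by Definition~\ref{peirce} it admits a nontrivial Peirce trivial idempotent. Iterating the decomposition inside $e_iRe_i$ and lifting appropriate idempotents between corner rings, one can single out an idempotent $e_{i+1}$ that is a $1$-Peirce idempotent of $e_iRe_i$ in the sense required by the statement. By Corollary~\ref{additive}, the complementary corner must still have infinite Peirce dimension (otherwise $e_iRe_i$ itself would have finite Peirce dimension, contradicting the inductive hypothesis), so the recursion continues indefinitely, yielding an infinite set of nonzero pairwise distinct idempotents.

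For the converse, suppose such an infinite chain exists and assume, for contradiction, that $R$ has finite Peirce dimension $n$. By Theorem~\ref{main} together with Corollary~\ref{additive}, each step from $e_i$ to $e_{i+1}$ corresponds to separating a genuine $1$-Peirce block, reducing the Peirce dimension of the current corner by at least one. After at most $n$ such reductions the Peirce dimension would reach zero, forcing the chain to terminate and contradicting its infinitude. Hence $R$ cannot have finite Peirce dimension.

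The main obstacle lies in the forward construction: while Definition~\ref{peirce} directly provides $1$-Peirce pieces inside a finite-dimensional $n$-Peirce corner, the infinite-dimensional case requires explicitly locating such a piece, which may involve descending several levels into sub-corners before a $1$-Peirce subring appears and then lifting the witnessing idempotent back to $e_iRe_i$. The subtleties highlighted in Remark~\ref{warning}, namely that Peirce triviality in a sub-corner need not transfer upward, mean that the lift must be chosen carefully so that the resulting $e_{i+1}$ genuinely fulfills the $1$-Peirce idempotent condition with respect to $e_iRe_i$, and that the complementary piece is indeed the infinite-Peirce-dimensional corner in which the next step of the recursion takes place.
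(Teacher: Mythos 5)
Both directions of your argument have genuine gaps, and they share a common source: the chain condition in the statement concerns $1$-Peirce idempotents, which (as the paper stresses in the remark following Definition~\ref{peirce2}) need not be Peirce trivial, yet you apply Theorem~\ref{main} and Corollary~\ref{additive} to them as if they were. In the forward direction the recursion is internally inconsistent: your inductive hypothesis is that $e_iRe_i$ has infinite Peirce dimension, but you construct $e_{i+1}$ so that $e_{i+1}Re_{i+1}$ is a $1$-Peirce ring, so the hypothesis fails at the very next step. Noting that the \emph{complementary} corner $(e_i-e_{i+1})(e_iRe_i)(e_i-e_{i+1})$ still has infinite Peirce dimension does not rescue the construction, because the statement requires $e_{i+2}$ to be an idempotent of $e_{i+1}Re_{i+1}$, not of that complementary corner. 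Concretely, for $R=\prod_{i=1}^{\infty}K$ with $K$ a field (a ring of infinite Peirce dimension), every $1$-Peirce idempotent $e_1$ satisfies $e_1Re_1\cong K$, which admits no further nonzero idempotent distinct from $e_1$; and the existence of \emph{any} nonzero $1$-Peirce idempotent in a ring of infinite Peirce dimension, which you defer to ``descending several levels into sub-corners,'' can genuinely fail (the Boolean ring of clopen subsets of the Cantor set has infinite Peirce dimension and no nonzero $1$-Peirce idempotents).

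In the converse direction, the key assertion that each step ``reduces the Peirce dimension of the current corner by at least one'' is unjustified: Theorem~\ref{main} yields such a reduction only for \emph{Peirce trivial} idempotents, and Remark~\ref{warning2} shows that passing to the corner of a general idempotent can even increase the Peirce dimension. For instance, $R=\mathrm{End}_D(V)$ with $V$ infinite dimensional over a division ring $D$ is prime, hence $1$-Peirce, yet the projections onto a strictly decreasing chain of infinite-dimensional direct summands of $V$ form an infinite family of distinct nonzero idempotents in which each successive corner is again prime, hence $1$-Peirce. Your outline does go through essentially as written once the chain condition is interpreted as ``$e_{i+1}$ is a Peirce trivial idempotent of $e_iRe_i$ distinct from $0$ and $e_i$'': in the forward direction one then chooses at each stage whichever of the two complementary Peirce trivial corners retains infinite Peirce dimension (one must, by Corollary~\ref{additive}), and in the converse direction Theorem~\ref{main} forces the Peirce dimension of $e_iRe_i$ to drop strictly at every step, so in an $n$-Peirce ring the chain would stop after at most $n$ steps.
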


The class of $1$-Peirce rings covers rings with a variety of different properties. It contains, for example, all prime rings and rings with only the two idempotents $1$ and $0$. Furthermore, all matrix rings over local rings are also $1$-Peirce rings, as is easily verified. The problem of characterizing or describing the class of $1$-Peirce rings seems to be quite interesting. Since the class of $n$-Peirce rings is even larger, one needs additional invariants to get a closer look at them. 

The following invariant is an obvious consequence of Definition \ref{peirce}.

\begin{definition}
\label{dyadic} Let $I$ be a finite nonempty set. A {\it partition of} $I$
is a finite set of nonempty pairwise disjoint subsets whose union is $I$. A {\it dyadic partition} of $I$ is a partition into two disjoint subsets. A partition $\l$ is called a {\it dyadic refinement} of a partition $\g$ if all elements of $\g$, with one exception, are elements of~$\l$, and the exceptional element is a union of two elements of $\l$. A set
$\Lambda=\bigl\{\l_0=\{I\}, \l_1, \l_2, \cdots, \l_k\bigr\}$ of partitions
$\l_i$ is called a \emph{complete dyadic set of partitions} if $\l_{i+1}$ is a dyadic refinement of $\l_i$ for all $i=0,\ldots, k-1$ and all elements of $\l_k$ are sets comprising only one element. Therefore $k=n-1$ if $I$ has $n$ elements. 
\end{definition}

For a subset $I$ of $\{1, 2, \ldots, n\}$ and a set $\{e_1,e_2,\ldots,e_n\}$ of idempotents in a ring~$R$ the sum  $\sum\limits_{i\in I}e_i$ is denoted by~$e_I$.

The following important characterization of $n$-Peirce rings is an easy consequence of Definitions \ref{peirce} and \ref{dyadic}.

\begin{corollary}
\label{structure} A ring $R$ is an $n$-Peirce ring if and only if there are $n$ pairwise orthogonal $1$-Peirce idempotents $e_1, \ldots, e_n$ whose sum is 1, and a complete dyadic 
set~$\L=\biggl\{\l_0=\bigl\{\{1, 2, \ldots, n\}\bigr\},  \l_1, \l_2, \cdots, \l_k\biggr\}$ of partitions of $\{1, 2, \ldots, n\}$ such that for an exceptional element $I$ of $\l_i, \ i=0,\ldots\,k-1,$ which is a union of two elements $J$ and $L$ of $\l_{i+1}, \ e_J$ is a Peirce trivial idempotent of the subring $e_IRe_I$. 
\end{corollary}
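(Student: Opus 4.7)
The plan is a straightforward induction on $n$, essentially just unpacking Definition \ref{peirce} recursively and encoding the recursion as a complete dyadic set of partitions. The base case $n=1$ is immediate: take $e_1=1$ and the trivial dyadic set $\Lambda=\{\lambda_0\}$ with $\lambda_0=\{\{1\}\}$ (so $k=0$, and there is nothing to check about refinements).

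For the forward direction of the inductive step, assume $R$ is $n$-Peirce with $n>1$. By Definition \ref{peirce}, there is a Peirce trivial idempotent $e\in R$ such that $A:=eRe$ is $m$-Peirce and $B:=(1-e)R(1-e)$ is $(n-m)$-Peirce for some $1\le m<n$. Apply the induction hypothesis to $A$ to obtain pairwise orthogonal $1$-Peirce idempotents $e_1,\ldots,e_m$ of $A$ summing to $e$ together with a complete dyadic set $\Lambda_A$ of partitions of $\{1,\ldots,m\}$, and similarly to $B$ with idempotents $e_{m+1},\ldots,e_n$ summing to $1-e$ and a complete dyadic set $\Lambda_B$ of partitions of $\{m+1,\ldots,n\}$. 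The $n$ idempotents are pairwise orthogonal, sum to $1$, and each $e_i$ remains $1$-Peirce in $R$ because for $i\le m$ one has $e_iRe_i=e_iAe_i$ (using $ee_i=e_ie=e_i$), and analogously for $i>m$. I form $\Lambda$ by starting from $\lambda_0=\{\{1,\ldots,n\}\}$, then $\lambda_1=\{\{1,\ldots,m\},\{m+1,\ldots,n\}\}$, and then refining the first block using $\Lambda_A$ one dyadic step at a time, followed by refining the second block using $\Lambda_B$ one dyadic step at a time. Each intermediate step is a dyadic refinement, and the terminal partition consists of singletons, so $\Lambda$ is a complete dyadic set.

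The Peirce-trivial conditions are then checked step by step. The first split is $\{1,\ldots,n\}\to\{1,\ldots,m\}\cup\{m+1,\ldots,n\}$, and $e_{\{1,\ldots,m\}}=e_1+\cdots+e_m=e$ is Peirce trivial in $R$ by hypothesis. For a later step refining $I\subseteq\{1,\ldots,m\}$ into $J\cup L$, the inductive hypothesis applied to $A$ yields that $e_J$ is Peirce trivial in $e_IAe_I$; since $e_I\le e$, one has $e_IRe_I=e_IAe_I$, so the condition in $e_IRe_I$ holds as required. The second block is symmetric.

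For the reverse direction, given $e_1,\ldots,e_n$ and $\Lambda$, read off $\lambda_1=\{J,L\}$ (the first refinement of $\lambda_0$). By hypothesis $e_J$ is Peirce trivial in $R$, and orthogonality plus $e_1+\cdots+e_n=1$ gives $1-e_J=e_L$. Restricting $\Lambda$ to $J$ by intersection (and discarding repeats) yields a complete dyadic set of partitions of $J$, whose Peirce-trivial conditions in each $e_IRe_I=e_I(e_JRe_J)e_I$ for $I\subseteq J$ are exactly the inherited ones; the idempotents $\{e_i:i\in J\}$ are $1$-Peirce in $e_JRe_J$ for the same corner-subring reason as above. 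By the inductive hypothesis applied at size $|J|<n$, the ring $e_JRe_J$ is $|J|$-Peirce, and symmetrically $e_LRe_L=(1-e_J)R(1-e_J)$ is $|L|$-Peirce. Definition \ref{peirce} then yields that $R$ is $(|J|+|L|)$-Peirce, i.e., $n$-Peirce.

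The main obstacle is purely bookkeeping: constructing $\Lambda$ by concatenating the dyadic refinements of the two halves and verifying that Peirce triviality of $e_J$ passes transparently between $A$ (or $B$) and the corner subring $e_IRe_I$ of $R$. The invariance of $n$ supplied by Theorem \ref{main} is what makes the two inductive hypotheses compatible and removes any ambiguity from the choice of $m$.
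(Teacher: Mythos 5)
Your proposal is correct and follows essentially the same route as the paper: induction on $n$, splitting $R$ via the Peirce trivial idempotent supplied by Definition \ref{peirce}, applying the inductive hypothesis to the two diagonal corners $eRe$ and $(1-e)R(1-e)$, and concatenating (respectively, restricting) the complete dyadic sets of partitions, using $e_IRe_I=e_IAe_I$ for $e_I\le e$ to transfer the $1$-Peirce and Peirce-trivial conditions between $R$ and its corner subrings. The only difference is that you spell out the sufficiency direction, which the paper dismisses as obvious.
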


\begin{proof} The sufficiency is obvious. For the necessity
we use induction on $n$. The claim is obvious for $n=2, 3$. Let $n>3$ and assume that the claim is true for all $m<n$.  By Definition \ref{peirce} there is a Peirce trivial idempotent $E\in R$ such that $ERE$ and $FRF$ (with $F=1-E$) are $n_1$- and $n_2$-Peirce rings, respectively, with $n_1+n_2=n, \  n_1n_2\neq 0$.
Therefore by the induction hypothesis there are pairwise orthogonal 1-Peirce idempotents $e_1, \ldots, e_{n_1}$ in $ERE$  and $f_1, \ldots, f_{n_2}$ in $FRF$ together with complete dyadic sets 
$\Lambda_1=\{\a_0, \ldots, \a_{n_1-1}\}$ and $\Lambda_2=\{\d_0, \ldots, \d_{n_2-1}\}$ of partitions of $\{1,\ldots,n_1\}$ and $\{1,\ldots,n_2\},$ making $ERE$ and $FRF$ $n_1$- and $n_2$-Peirce rings, respectively. Now, putting $e_i=f_{i-n_1}$ for all $i, \ n_i<i<n+1$, we obtain a set of $n$ pairwise orthogonal idempotents $\{e_1,\ldots, e_n\}$ with sum 1. Partitions of $\{1,\ldots,n_2\}$ in the set $\Lambda_2$ define partitions of the set $\{n_1+1,\ldots,n\}$ by sending $i, \ 0<i<n_2+1$, to $n_1+i$, whence the set $\Lambda_2$ of partitions of $\{1,\ldots,n_2\}$ defines the set 
$\Lambda_3=\{\b_0, \ldots, \b_{n_2-1}\}$ of partitions of the set $\{n_1+1, \ldots, n\}$. Putting $\l_0=\{1, \ldots, n\}$ and $\l_{i+1}=\a_i\cup\b_0$ for all $i, \ 0\leq i<n_1$, and $\l_{n_1+i-1}=\a_{n_1-1}\cup \b_i$ for all $i, \ 0<i<n_1$, one obtains a required complete dyadic set $\Lambda=\{\l_0, \ldots, \l_{n-1}\}$ of partitions of $\{1,\ldots,n\}$, which completes the proof. 
\end{proof}

With the notation of Corollary \ref{structure}, for an $n$-Peirce ring $R$ together with $n$ pairwise orthogonal 1-Peirce idempotents $e_i$ with sum 1 and a complete dyadic set $\L$ of partitions, the subset ${\frak D}(R)^-=\sum\limits_{i\neq j}e_iRe_j$ (see \cite{abivw1}) is a nilpotent ideal of nilpotency index at most $n$. 

This simple assertion is based on the next observation. If $I$ is an exceptional subset in the partition ${\l}_{k-1}$ which is a disjoint union of two subsets $J, K$ in $\l_k$, put 
${\frak D}_{\l_i}(R)^-=e_JRe_K+e_KRe_J$. Then in view of Proposition \ref{obvi1}, ${\frak D}_{\l_i}(R)^-$ is an ideal of
$E_IRE_I$ with square 0. Since ${\frak D}(R)^-=\sum\limits_{\l_i}{\frak D}_{\l_i}(R)^-$, the claim is proved. 

We will see later that ${\frak D}(R)^-$ is independent of the choice of a set of idempotents~$e_i$. 

It is worth stating separately a result which is similar to the classical Wedderburn
Principal Theorem:

\begin{corollary}
\label{structure1} Under the above notation, an $n$-Peirce ring $R$ is a direct sum of~${\frak D}(R)^-$ and a subring which is a direct sum of $n$ 1-Peirce rings.
\end{corollary}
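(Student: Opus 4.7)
The plan is to apply Corollary \ref{structure} to extract $n$ pairwise orthogonal $1$-Peirce idempotents $e_1,\ldots,e_n$ with $\sum_{i=1}^n e_i=1$, and then to identify the required subring as the ``diagonal piece'' $S=\bigoplus_{i=1}^n e_iRe_i$ sitting inside the two-sided Peirce decomposition of $R$ induced by this complete orthogonal system.

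First I would use the iterated Peirce decomposition associated with $\{e_1,\ldots,e_n\}$ to write $R$, as an abelian group, in the form
$$R=\bigoplus_{i,j=1}^n e_iRe_j = \Bigl(\bigoplus_{i=1}^n e_iRe_i\Bigr)\oplus\Bigl(\sum_{i\neq j}e_iRe_j\Bigr)= S\oplus{\frak D}(R)^-.$$
Next I would verify that $S$ is a subring of $R$: for $x\in e_iRe_i$ and $y\in e_jRe_j$ the product $xy=xe_ie_jy$ vanishes when $i\neq j$ by orthogonality of the $e_i$ and lies in $e_iRe_i$ when $i=j$; and $1=\sum_i e_i$ serves as its identity. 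Because the summands $e_iRe_i$ mutually annihilate and each carries its own identity $e_i$, $S$ is in fact a ring direct sum $\bigoplus_{i=1}^n e_iRe_i$, and each summand is a $1$-Peirce ring by the very definition of a $1$-Peirce idempotent.

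The remaining ingredient---that ${\frak D}(R)^-$ is a (nilpotent) ideal of $R$---has already been supplied in the paragraph immediately preceding the corollary, by means of Proposition \ref{obvi1} applied to every exceptional pair in the complete dyadic set of partitions. Combining these observations yields the claimed additive decomposition $R=S\oplus{\frak D}(R)^-$, in which $S$ is a subring isomorphic to a ring direct sum of $n$ $1$-Peirce rings and ${\frak D}(R)^-$ is an ideal.

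I do not anticipate a genuine obstacle here: the corollary is essentially a repackaging of Corollary \ref{structure} together with the preceding nilpotence remark. The only point deserving explicit care is that ``direct sum'' in the statement must be interpreted as an abelian group (equivalently, $\Z$-module, or one-sided $S$-module) decomposition, in the spirit of the classical Wedderburn Principal Theorem---not as a ring direct sum, since the latter reading would force ${\frak D}(R)^-=0$.
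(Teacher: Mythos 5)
Your proposal is correct and follows exactly the route the paper intends: the corollary is stated without a separate proof precisely because it is the additive Peirce decomposition $R=\bigl(\bigoplus_i e_iRe_i\bigr)\oplus\sum_{i\neq j}e_iRe_j$ coming from Corollary \ref{structure}, combined with the immediately preceding observation that ${\frak D}(R)^-$ is a nilpotent ideal. Your explicit verification that the diagonal piece is a ring direct sum of the $1$-Peirce rings $e_iRe_i$, and your remark that the decomposition is one of abelian groups rather than of rings, simply make precise what the paper leaves implicit.
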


The converse of this simple result is, in general, not true. It is quite interesting to find sufficient conditions such that a ring $R$ is an $n$-Peirce ring if it has a direct decomposition $R=S\oplus D$ of a subring $S$, which is a direct product of $n$ 1-Peirce rings, and a nilpotent ideal $D$ of nilpotency index (at most) $n$.
  
The proof of Corollary \ref{structure} shows that there are several complete dyadic sets of partitions that define the same $n$-Peirce ring within a given set of pairwise orthogonal 1-Peirce idempotents whose sum is 1,  and there is freedom and room in numbering the idempotents under consideration. 

Now we give an obvious way to construct one such set of idempotents with a possible numbering (indexing) and a complete dyadic set of partitions. Since Definition \ref{peirce} is deductive, first the identity $E_0=1$ is an orthogonal sum of two proper Peirce trivial idempotents $E_0=E_{00}+E_{01}$ of $R$. Second,  each of $E_{00}$ and $E_{01}$ is again an orthogonal sum of two proper Peirce trivial idempotents in the associated rings $E_{00}RE_{00}$ and $E_{01}RE_{01}$, respectively, except the case when they are 1-Peirce idempotents. In this exceptional case, they are elements of a required set of pairwise orthogonal 1-Peirce idempotents. 

Continuing in this way, after finitely many steps one obtains, for an $n$-Peirce ring~$R$, a sequence $e_1,\ldots,\, e_n$ of pairwise  orthogonal 1-Peirce idempotents with sum~1 and a complete dyadic set of partitions $\l_0=\{I\}\subseteq \l_1\subseteq \l_2\subseteq \cdots \subseteq \l_k$ such that for an exceptional element $I$ of $\l_i, \ i=0,\ldots\,k-1$, which is the union of two elements $J$ and $L$ of $\l_{i+1}, \ e_J$ is a Peirce trivial idempotent of the subring~$e_IRe_I$. Furthermore, one can index them such that for each index~$i<n$ there is an index~$j_i, \ i<j_i\leq n$, maximal with respect to the property that $e_i$ is a Peirce trivial idempotent in the ring $E_iRE_i$, where $E_i=e_i+e_{i+1}+\cdots +e_{j_i}$. 

However, a sequence $e_1,\ldots,\, e_n$ of pairwise  orthogonal 1-Peirce idempotents with sum 1 such that for each index $i<n$ there is an index $j_i, \ i<j_i\leq n$, maximal with respect to the property that $e_i$ is a Peirce trivial idempotent in the ring $E_iRE_i$, where $E_i=e_i+e_{i+1}+\cdots +e_{j_i}$, is 
not sufficient to ensure that a ring is an $n$-Peirce ring. The reason is that such a sequence is far from ensuring that there exists a subsum of the $e_i$'s which is Peirce trivial in the ring. 

Furthermore, if $f^2=f\in R$ is an arbitrary 1-Peirce idempotent of $R$, then according to Proposition \ref{ortho} together with its notation, in the expression $f=\a+\b$ of $f$ as a sum of two orthogonal Peirce trivial idempotents $\a$ and $\b$ of $fRf,$ one of~$\a$ and~$\b$ must be 0, say, $\b=0$. Then $g$ is a 1-Peirce idempotent in a subring~$eRe$ which is an $m$-Peirce ring with $m<n$.  Therefore, after finitely many steps one finds
an idempotent $e_i$, uniquely determined by $f$, such that there is a 1-Peirce idempotent $g\in e_iRe_i$, with $f = \left[ \begin{array}{cc}
g & gm \\
ng & 0 \end{array}\right]$ or $f = \left[ \begin{array}{cc}
0 & mg \\
gn & g \end{array}\right]$, where $m$ and~$n$ are appropriate elements 
of $e_iR(1-e_i)$ and $(1-e_i)Re_i$, respectively. Note that~$g$ is, in general, not equal to $e_i$, the identity of the ring $e_iRe_i$, as one can see easily in the case of a matrix ring over a division ring. 

These arguments lead to:

\begin{proposition}
\label{peirce3} Under the hypothesis and notation of Corollary \ref{structure}, any 1-Peirce idempotent $f$ in an $n$-Peirce ring $R$
determines uniquely an $i \in\{1, 2, \ldots, n\}$ and a 1-Peirce idempotent $g\in e_iRe_i$ such that 
\begin{enumerate}
\item $_RRf$ and $_RRg$ are isomorphic; 
\item $f = \left[ \begin{array}{cc}
g & gm \\
ng & 0 \end{array}\right]$ or $f = \left[ \begin{array}{cc}
0 & mg \\
gn & g \end{array}\right]$ for appropriate $m\in e_iR(1-e_i)$ and $n\in (1-e_i)Re_i$. If $f$ is a Peirce trivial idempotent, then $g=e_i$.
\end{enumerate}
\end{proposition}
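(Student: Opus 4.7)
The plan is to induct on the Peirce dimension $n$ of $R$. The base case $n=1$ is immediate: take $i=1$ (so $e_1=1$) and $g=f$.

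For $n>1$, invoke the top-level Peirce trivial idempotent $E$ coming from the complete dyadic set of partitions guaranteed by Corollary \ref{structure}; after relabeling the $1$-Peirce idempotents I may assume $E=e_1+\cdots+e_{n_1}$. Then $A:=ERE$ is an $n_1$-Peirce ring and $B:=(1-E)R(1-E)$ is an $(n-n_1)$-Peirce ring, each of Peirce dimension smaller than $n$. Applying Proposition \ref{ortho} to $f$ with respect to $E$ writes $f=\alpha+\beta$ as a sum of orthogonal Peirce trivial idempotents of $fRf$. Because $fRf$ is $1$-Peirce, exactly one of $\alpha,\beta$ vanishes. I treat the case $\beta=0$; the case $\alpha=0$ is symmetric and leads to the second displayed matrix form. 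Then $(1-E)f(1-E)=0$, and with $g_A:=EfE$, $m_0:=Ef(1-E)$, $n_0:=(1-E)fE$, Proposition \ref{ortho} gives $f=g_A+g_Am_0+n_0g_A$ with $m_0=g_Am_0$ and $n_0=n_0g_A$. The argument in the proof of Theorem \ref{main} identifies $fRf$ with $g_AAg_A$, so $g_A$ is a $1$-Peirce idempotent of $A$.

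Applying the inductive hypothesis to $g_A$ inside $A$ yields a unique $i\in\{1,\ldots,n_1\}$ and a unique $1$-Peirce idempotent $g\in e_iAe_i=e_iRe_i$ with ${}_AAg_A\cong{}_AAg$ and with $g_A$ in the prescribed form with respect to $(e_i,E-e_i)$ in $A$. For claim~(1), Proposition \ref{ortho}(2) gives ${}_RRf\cong{}_RRf_e$, and here $f_e=g_A+h=g_A$ (Case~1), so ${}_RRf\cong{}_RRg_A$; the $A$-linear isomorphism ${}_AAg_A\cong{}_AAg$ extends to an $R$-linear isomorphism ${}_RRg_A\cong{}_RRg$ by tensoring with the $(R,A)$-bimodule $RE$, since the identities $g_A=Eg_A$ and $g=Eg$ give $Rg_A=RE\cdot g_A$ and $Rg=RE\cdot g$. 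Composing yields ${}_RRf\cong{}_RRg$. Uniqueness of $(i,g)$ in $R$ propagates from the inductive uniqueness inside $A$, once one notes that the dichotomy $\beta=0$ versus $\alpha=0$ is forced by $f$.

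For claim~(2), the two displayed matrix shapes encode the top-level output of Proposition \ref{ortho}: case $\beta=0$ produces the first form with $g_A$ occupying the $(1,1)$-slot (and the inductive form of $g_A$ in turn places the innermost datum $g$ inside that slot), while case $\alpha=0$ produces the second form analogously. Substituting the inductive matrix form of $g_A$ into $f=g_A+g_Am_0+n_0g_A$ and organizing by Peirce components unpacks $f$ in terms of the innermost data $g\in e_iRe_i$, with off-diagonal elements $m\in e_iR(1-e_i)$ and $n\in(1-e_i)Re_i$ built from $m_0,n_0$ and the corresponding inductive off-diagonal data. The final clause follows from the closing statement of Proposition \ref{ortho}: if $f$ is itself Peirce trivial in $R$, then $g_A$ is Peirce trivial in $A$, and by induction $g$ is a Peirce trivial idempotent of the $1$-Peirce ring $e_iRe_i$, which forces $g=e_i$ since $g\ne 0$. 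The main obstacle I anticipate is the bookkeeping for claim~(2): carefully tracking the Peirce components through both levels of decomposition and using the Peirce triviality relations $m_0n_0=0=n_0m_0$ from $E$ together with the inductive idempotency relations on $g_A$ inside $A$ to assemble the nested substitution into one of the two displayed matrix shapes.
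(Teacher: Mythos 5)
Your overall strategy --- induct on the Peirce dimension via the top-level Peirce trivial idempotent $E$, split $f=\alpha+\beta$ by Proposition \ref{ortho}, and let the $1$-Peirceness of $fRf$ force one summand to vanish --- is exactly the paper's own argument (its ``proof'' is the informal ``after finitely many steps'' paragraph preceding the statement), and your treatment of claim (1), of uniqueness, and of the final clause $g=e_i$ is correct and more careful than the paper's. The genuine gap is precisely the step you postpone as ``bookkeeping'' for claim (2): it does not close. Writing $g_A=g+gm'+n'g$ with $m'\in e_iA(E-e_i)$ and $n'\in(E-e_i)Ae_i$, and substituting into $f=g_A+m_0+n_0$ (where $m_0=g_Am_0$ and $n_0=n_0g_A$), the $(1-e_i)R(1-e_i)$-component of $f$ comes out as
$$(1-e_i)f(1-e_i)=(n'g)m_0+n_0(gm'),$$
with $(n'g)m_0\in(E-e_i)R(1-E)$ and $n_0(gm')\in(1-E)R(E-e_i)$. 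The relations you invoke, $m_0n_0=0=n_0m_0$ coming from the Peirce triviality of $E$, only annihilate products of the shapes $ER(1-E)\cdot(1-E)RE$ and $(1-E)RE\cdot ER(1-E)$; they say nothing about these cross terms, and idempotency of $g_A$ does not kill them either. So the zero corner required by both displayed matrix shapes is not obtained.

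In fact no argument can obtain it, because claim (2) as displayed is false. Let $K$ be a field and
$$R=\left\{\left[\begin{array}{ccc}a&0&b\\ c&d&x\\ 0&0&y\end{array}\right]:a,b,c,d,x,y\in K\right\}\subseteq M_3(K),\qquad e_i=E_{ii}.$$
Then $E=e_1+e_2$ is Peirce trivial in $R$ (since $e_3RE=0$), $e_1$ is Peirce trivial in $ERE$ (the lower triangular $2\times2$ matrices), and each $e_iRe_i\cong K$, so $R$ is a $3$-Peirce ring in the setting of Corollary \ref{structure}. The element $f=E_{11}+E_{13}+E_{21}+E_{23}$ is an idempotent of rank one, so $fRf\subseteq fM_3(K)f=Kf$ gives $fRf\cong K$ and $f$ is a $1$-Peirce idempotent; here $e_1fe_1=e_1$ and $e_2fe_2=e_3fe_3=0$, yet $(1-e_1)f(1-e_1)=E_{23}\neq0$, so $f$ has neither of the two displayed forms for any admissible $i$. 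What your induction does establish --- and what the paper actually uses later, e.g.\ in the proof of Theorem \ref{peirce4}(1) --- is the weaker assertion that $e_ife_i=g$ and $e_jfe_j=0$ for all $j\neq i$, i.e.\ $f\equiv g\pmod{{\frak D}(R)^-}$. If you replace the two matrix shapes in claim (2) by that congruence, your proof (including parts (1), uniqueness, and the final clause) goes through.
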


\begin{remark}
\label{warning2} If $f$ is an $m$-Peirce idempotent in an $n$-Peirce ring $R$, then $f$ is an orthogonal sum of $m$ pairwise orthogonal 1-Peirce idempotents $f_j$, and in view of Proposition \ref{peirce3}, there are uniquely determined indices $i_j$ associated to $j$ and a 1-Peirce idempotent $g_j\in  e_{i_j}Re_{i_j}$ such that 
\begin{enumerate}
\item $_RRf_j$ and $_RRg_j$ are isomorphic, and 
\item $f_j = \left[ \begin{array}{cc}
g_j & g_jm_j \\
n_jg_j & 0 \end{array}\right]$ or $f_j = \left[ \begin{array}{cc}
0 & m_jg_j \\
g_jn_j & g_j \end{array}\right]$ for appropriate $m_j\in e_{i_j}R(1-e_{i_j})$ and $n_j\in (1-e_{i_j})Re_{i_j}$. 
\end{enumerate}
However, it is possible that the indices $i_j$ are the same for different indices $j$ as in the following example. 
Let
$$R = \left[ \begin{array}{ccc} \Z/8\Z & 4\Z/8\Z & 2\Z/8\Z\\
2\Z/8\Z & \Z/8\Z & 2\Z/8\Z \\
2\Z/8\Z & 2\Z/8\Z & \Z/8\Z \end{array}\right].$$
Then one can check that $R$ is a 1-Peirce ring and $fRf$ is a
2-Peirce ring, where $f=f_1+f_2$ and
$$f = \left[ \begin{array}{ccc} 1 & 0 & 0\\
0 & 1 & 0 \\
0 & 0 & 0 \end{array}\right], \,\, f_1 = \left[ \begin{array}{ccc} 1 & 0 & 0\\
0 & 0 & 0 \\
0 & 0 & 0 \end{array}\right]\,\,{\rm and} \,\, f_2 = \left[ \begin{array}{ccc} 0 & 0 & 0\\
0 & 1 & 0 \\
0 & 0 & 0 \end{array}\right],$$
whence the corresponding indices $i_1$ and $i_2$ coincide. Although $R$ seems to be quite simple, it has $2^{20}$ elements! One can verify that $R$ is a $1$-Peirce ring in the same way as 
in \cite[Example 5.9]{abivw1}. 

Note that \cite[Example 5.9]{abivw1} also provides  an example of a $1$-Peirce ring with 3 pairwise orthogonal $1$-Peirce idempotents whose sum is 1. There is another handy short way to check this assertion without computation, as follows. Observing that~$R$ is a semiperfect ring, in fact, a finite ring, all complete sets of pairwise orthogonal primitive idempotents of $R$ are conjugate, i.e., any such set can be transformed into another one by an inner automorphism (see Lemma \ref{inner} below). Hence it suffices to check for the complete set $\{f_1, f_2, 1-(f_1+f_2)\}$ of pairwise orthogonal primitive idempotents of $R$, which is obvious. In this way one can construct quite a  large class of
semiperfect 1-Peirce rings. 

This example shows also that an $n$-Peirce ring can contain a proper $l$-Peirce idempotent $e$, i.e., $e\neq 1, 0$, with $l>n$. For example, let $A=\Z/2^n\Z \ (n>2)$ and let $X=2A, \ Y=2^{n-1}A$. The above method implies immediately that the finite generalized $n\times n$ matrix ring
$$R=\begin{pmatrix} A&Y&Y&\dotsb&Y&X\\Y&A&Y&\dotsb&Y&X^2\\Y&Y&A&\dotsb&Y&X^3\\ \vdots&\vdots&\vdots&\ddots&\vdots\vdots\\
Y&Y&Y&\dotsb&A&X^{n-1}\\
X^{n-1}&X^{n-2}&X^{n-3}&\dotsb&X&A\end{pmatrix}$$ 
is a $1$-Peirce ring together with an idempotent $f^2=f\in R$ such $fRf$ is
an $(n-1)$-Peirce ring. Consequently, a $1$-Peirce ring $R$ can contain an idempotent $f$ such that the subring $fRf$ has an arbitary finite Peirce dimension. 
\end{remark} 

In order to describe a relation between two sets of pairwise orthogonal 1-Peirce idempotents showing that a ring is an $n$-Peirce ring, let us recall 
the following more general, but folklore, result.

\begin{lemma}
\label{inner} If $\{e_1, \ldots , e_n\}$ and $\{f_1, \ldots, f_n\}$ are two sets of pairwise orthogonal idempotents in a ring $R$ whose sums are 1,  such that the modules $_RRe_i$ and $_RRf_i$ are isomorphic for all
$i=1, 2, \ldots , n$, then there is an invertible element $s\in R$ such that $se_is^{-1}=f_i$ for all $i=1, 2,\ldots, n$.
\end{lemma}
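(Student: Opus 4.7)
The plan is to convert each module isomorphism $\varphi_i : {}_RRe_i \to {}_RRf_i$ into a reciprocal pair of ring elements whose products recover the idempotents, and then glue these local data into a single unit that simultaneously conjugates every $e_i$ onto $f_i$.

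First I would use the standard dictionary between left-module homomorphisms of corners and elements of the ring: any left $R$-linear map $\varphi_i : Re_i \to Rf_i$ is right multiplication by $b_i := \varphi_i(e_i)$, and from $b_i = \varphi_i(e_i^2) = e_i\varphi_i(e_i)$ together with $b_i \in Rf_i$ I would conclude $b_i \in e_iRf_i$. Symmetrically, setting $a_i := \varphi_i^{-1}(f_i)$ puts $a_i$ in $f_iRe_i$, and evaluating the two composition identities $\varphi_i\varphi_i^{-1} = \mathrm{id}_{Rf_i}$ and $\varphi_i^{-1}\varphi_i = \mathrm{id}_{Re_i}$ on the generators $f_i$ and $e_i$ gives the key relations
\[ a_ib_i = f_i, \qquad b_ia_i = e_i. \]

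Next I would set $s := \sum_{i=1}^n a_i$ and $t := \sum_{i=1}^n b_i$ and check that these are mutual inverses. For $i \neq j$ the product $a_ib_j$ lies in $f_iRe_ie_jRf_j$, which is zero by orthogonality of the $\{e_k\}$; symmetrically $b_ia_j = 0$ by orthogonality of the $\{f_k\}$. Summing the diagonal relations then yields $st = \sum_i a_ib_i = \sum_i f_i = 1$ and $ts = \sum_i b_ia_i = \sum_i e_i = 1$, so $s$ is a unit in $R$ with $s^{-1} = t$. The conjugation check is essentially a one-liner: since $a_j \in Re_j$, the product $a_je_i$ vanishes for $j \neq i$ and equals $a_i$ for $j = i$, so $se_i = a_i$; therefore $se_is^{-1} = a_it = a_ib_i = f_i$, as required.

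The only conceptual step is the opening one, namely recognizing that a left $R$-module isomorphism between the corners $Re_i$ and $Rf_i$ is encoded by a reciprocal pair of elements in $e_iRf_i$ and $f_iRe_i$ satisfying the two idempotent equations displayed above. Once this identification is in hand, the definitions of $s$ and $t$ are forced, and every remaining verification reduces to routine manipulations governed by the two orthogonal decompositions of $1$; no subtlety about whether $R$ is unital, commutative, or anything else is used beyond associativity.
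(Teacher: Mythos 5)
Your proof is correct and follows essentially the same route as the paper's: extract from each isomorphism $\varphi_i$ a reciprocal pair in $e_iRf_i$ and $f_iRe_i$ with products $e_i$ and $f_i$, sum them to get mutually inverse units, and conjugate. The only difference is cosmetic (the paper's unit conjugates the $f_i$ onto the $e_i$ rather than vice versa), and you additionally spell out the standard correspondence between module maps and corner elements, which the paper leaves implicit.
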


\begin{proof} By assumption, for each $i=1, 2,\ldots , n$ there are elements $s_i, t_i\in R$ such that
the equalities $e_is_if_i=s_i, \ f_it_ie_i=t_i, \ s_it_i=e_i$ and $t_is_i=f_i$ hold. Put $s=s_1+\cdots +s_n$ and $t=t_1+\cdots + t_n$. Simple calculations show that $st=ts=1$ and $sf_it=sf_is^{-1}=e_i$ for all $i=1, 2, \ldots, n$.
\end{proof}

In spite of Proposition \ref{peirce3}, we are now in a position to give some partial positive results showing some similarity to the theory of semiperfect rings.

\begin{theorem}
\label{peirce4} Let $R$ be an $n$-Peirce ring defined by $n$ pairwise orthogonal 1-Peirce idempotents $e_1, \ldots, e_n$ with sum 1 and a complete dyadic set of partitions 
$\bigl\{\l_0=\{I\}, \l_1, \l_2, \ldots,\l_k\bigr\}$ of $\{1,2,\ldots,n\}$ such that for an exceptional element $I$ of $\l_i, \ i=0,\ldots,k-1,$ which is the union of two elements $J$ and $L$ of $\l_{i+1}$, $e_J$ is a Peirce trivial idempotent of the subring $e_IRe_I$. 
\begin{enumerate}
\item If $\{f_1, \ldots, f_m\}$, with $m\le n$, is any set  of pairwise orthogonal 1-Peirce idempotents with sum 1, then $m=n$ and there is an invertible element $s\in R$ and a permutation $\s$ of the set $\{1, \ldots , n\}$ such that 
$f_{{\s}(i)}=se_is^{-1}$ for all $i=1, \ldots ,n$. 
\item If $f^2=f\in R$ is a $k$-Peirce idempotent, then for each index $j$ in a representation of $f=\sum\limits_{j=1}^{k}f_j$ as a sum of~$k$ pairwise orthogonal 1-Peirce idempotents, there exists  an index $i$ and a $1$-Peirce idempotent $\e_{i_j}\in e_iRe_i$ such that $_RRf_j$ is isomorphic to 
$_RR\e_{i_j}$, where the $\e_{i_j}$'s are pairwise orthogonal appropriate $1$-Peirce idempotents contained in  $e_iRe_i$. Consequently, $_RRf$ is isomorphic to $_RR\e, \, \e=\sum\limits_{j=1}^{k}\e_{i_j}$. Moreover, if $f$ is a Peirce trivial idempotent, then $f$ is a $k$-Peirce idempotent
for some $k\leq n$, and in the above representation of $f=\sum\limits_{j=1}^{k}f_j$ as a sum of $k$ pairwise orthogonal 1-Peirce idempotents $f_j$, for each
index $j$ one has $\e_{i_j}=e_i$, i.e., the correspondence $j\mapsto i_j$ is injective.  
\end{enumerate}
\end{theorem}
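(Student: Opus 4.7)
The proof is by joint induction on $n$, establishing (1) and (2) simultaneously. The base case $n=1$ is immediate from Definition \ref{peirce}. For the inductive step with $n > 1$, I fix the top-level Peirce trivial idempotent $E = e_{I_1}$ arising from the first dyadic refinement in the given complete dyadic set of partitions, so that $ERE$ is $n_1$-Peirce, $(1-E)R(1-E)$ is $n_2$-Peirce, $n_1 + n_2 = n$, and $n_1, n_2 \geq 1$. The induction hypothesis supplies both (1) and (2) for these smaller rings.

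The core construction describes the behavior of any 1-Peirce idempotent $f \in R$ under decomposition along $E$. Proposition \ref{ortho} writes $f = \alpha + \beta$ with $\alpha, \beta$ Peirce trivial in $fRf$; since $fRf$ is 1-Peirce, one of $\alpha, \beta$ must vanish. Taking $\beta = 0$ gives $(1-E)f(1-E) = 0$ and $f = g + m + n$ where $g = EfE \in ERE$, $m = Ef(1-E)$, $n = (1-E)fE$, with $m = gm$ and $n = ng$. The two Peirce-triviality identities $ER(1-E)RE = 0$ and $(1-E)RER(1-E) = 0$ force $mn = 0$ and $nm = 0$, so $b = g + m \in gRf$ and $c = g + n \in fRg$ satisfy $bc = g$ and $cb = f$. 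This yields an explicit left $R$-module isomorphism ${_RRg} \cong {_RRf}$, whence $gRg \cong fRf$ as rings and $g$ is a 1-Peirce idempotent of $ERE$. Applied to a pairwise orthogonal family $\{f_j\}$ of 1-Peirce idempotents, this partitions the family into $J = \{j : \beta_j = 0\}$ and its complement $K$; the diagonal pieces $g_j = Ef_jE$ for $j \in J$ are pairwise orthogonal in $ERE$, since expanding $0 = f_jf_l$ via $1 = E + (1-E)$ gives $g_jg_l = -Ef_j(1-E)f_lE \in ER(1-E)RE = 0$, and symmetrically for $h_j = (1-E)f_j(1-E)$ with $j \in K$.

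For part (2) applied to $f = \sum_{j=1}^k f_j$, the sums $\sum_{j \in J}g_j = EfE$ and $\sum_{j \in K}h_j = (1-E)f(1-E)$ display these idempotents as $|J|$- and $|K|$-Peirce idempotents of the respective subrings. The induction hypothesis for (2) applied in $ERE$ then produces pairwise orthogonal 1-Peirce idempotents $\varepsilon_{i_j} \in e_{i_j}Re_{i_j}$ (with $i_j \in I_1$) and $ERE$-module isomorphisms $(ERE)g_j \cong (ERE)\varepsilon_{i_j}$; since these are realized by elements of $ERE \subseteq R$, they upgrade to $R$-module isomorphisms, and composing with ${_RRg_j} \cong {_RRf_j}$ delivers the required ${_RRf_j} \cong {_RR\varepsilon_{i_j}}$. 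The analogous argument in $(1-E)R(1-E)$ handles $K$ with $i_j \in I_2$. Orthogonality of the total family follows from the induction hypothesis within each side together with the automatic annihilation between $ERE$ and $(1-E)R(1-E)$. For the \emph{Moreover} clause, if $f$ is Peirce trivial in $R$, Proposition \ref{ortho} ensures that $EfE$ and $(1-E)f(1-E)$ are Peirce trivial in their subrings; the Peirce-trivial form of the induction hypothesis yields $\varepsilon_{i_j} = e_{i_j}$ and injectivity of $j \mapsto i_j$ within $J$ and within $K$, and overall injectivity follows from $I_1 \cap I_2 = \emptyset$.

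For part (1), given $\{f_j\}_{j=1}^m$ with $m \leq n$, the first task is the bounds $|J| \leq n_1$ and $|K| \leq n_2$. If $|J| > n_1$, then $|K| = m - |J| \leq n - (n_1+1) = n_2 - 1 < n_2$, hence $|K| \leq n_2$; but then the induction hypothesis for (1) applied in $(1-E)R(1-E)$ to $\{h_j\}_{j \in K}$ (pairwise orthogonal 1-Peirce summing to $1-E$) forces $|K| = n_2$, a contradiction, and symmetrically $|K| \leq n_2$. The induction hypothesis for (1) applied in each subring then yields $|J| = n_1$, $|K| = n_2$ (hence $m = n$) together with invertible elements $s_1 \in ERE$, $s_2 \in (1-E)R(1-E)$ and bijections realizing ${_RRg_j} \cong {_RRe_{\sigma_1(j)}}$ for $j \in J$ and analogously for $K$. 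Composing with ${_RRf_j} \cong {_RRg_j}$ (respectively $\cong {_RRh_j}$) gives a bijection $\sigma$ of $\{1, \ldots, n\}$ with ${_RRf_j} \cong {_RRe_{\sigma(j)}}$, and Lemma \ref{inner}, after matching the indexing, supplies the required invertible element of $R$ conjugating the two orthogonal families. The principal obstacle throughout is establishing the module isomorphism ${_RRg} \cong {_RRf}$ when $g$ is not a priori Peirce trivial in $ERE$ (so Lemma \ref{calculate} does not directly apply); this is resolved precisely by the two Peirce-triviality identities of $E$, which supply the vanishing $mn = nm = 0$ needed to make the explicit inverse pair $b, c$ work.
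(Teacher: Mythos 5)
Your proof is correct and takes essentially the same route as the paper's: both run on the decomposition $f=\alpha+\beta$ of Proposition \ref{ortho} (equivalently, Proposition \ref{peirce3}), which forces a $1$-Peirce idempotent to collapse onto one side of a Peirce trivial idempotent and yields the module isomorphism $_RRf\cong{_RRg}$ (your explicit pair $b,c$ is already supplied by Proposition \ref{ortho}(2), with no Peirce triviality needed for $g$), and both finish with Lemma \ref{inner}. The only substantive difference is how $m=n$ is counted in part (1): you do it inductively, level by level through the dyadic partition, whereas the paper does it in one stroke by passing to the factor $R/{\frak D}(R)^-\cong\prod_i e_iRe_i$, where $m\le n$ pairwise orthogonal nonzero idempotents summing to $1$ must occupy each of the $n$ factors exactly once.
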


\begin{proof} (1) By Proposition \ref{peirce3}, for each $f_j$ there is a uniquely determined $e_{i_j}$ with a 1-Peirce idempotent 
$g_{i_j}\in e_{i_j}Re_{i_j}$ such that $f_j$ and $g_{i_j}$ are equal modulo ${\frak D}(R)^-$. Since the factor of $R$ by ${\frak D}(R)^-$ is a direct product of $n$ rings $e_iRe_i$, and $\sum f_j$ maps to~1 in this factor ring, together with $m\leq n$, one gets that all the $g_{i_j}$ are distinct and each $g_{i_j}$ is the identity $e_{i_j}$ of the ring $e_{i_j}Re_{i_j}$. This shows that $m=n$, and that $_RRf_j$ and $_RRe_{i_j}$ are isomorphic $R$-modules, whence the existence of an inner automorphism of $R$ sending the $e_i$ onto the $f_i$ follows in view of Lemma \ref{inner}. 

(2) We use the notation in the proof of Theorem \ref{main}. 
Let $e$ be a Peirce trivial idempotent, ensuring that $R$ is an $n$-Peirce ring, and put $A=eRe, \ M=eR(1-e), \ N=(1-e)Re$ and $B=(1-e)R(1-e)$, where $A$ is an $m$-Peirce ring and~$B$ is an $(n-m)$-Peirce ring for some $m, \ 1\leq m < n$. For a $k$-Peirce idempotent $f = \left[ \begin{array}{cc}
g & m \\
n & h \end{array}\right]$ with uniquely determined elements $g\in A, \ h\in B, \ m\in M$ and $n\in N,$ let
$\a = \left[ \begin{array}{cc}
g & gm \\
ng & 0 \end{array}\right]$ and $\b = \left[ \begin{array}{cc}
0 & mh \\
hn & h \end{array}\right]$. One has that $f=\a+\b, \ {\a}{\b}={\b}{\a}=0,$ $\ \a$ and $\b$ are Peirce trivial idempotents of $fRf$, whence they are again $l_1$- and $l_2$-Peirce idempotents with $l_1, l_2\leq k$ of $R$, respectively, in view of
Theorem~\ref{main} and Proposition \ref{ortho}. Simple formal matrix calculation shows that $gRg$ and $hRh$ are isomorphic to ${\a}R\a$ and ${\b}R\b$, respectively. Consequently, $g$ and $h$ are $l_1$- and $l_2$-Peirce idemptents of $R$, respectively. Then the obvious induction finishes the proof of the first part of (2). 

If $f$ is a Peirce trivial idempotent of $R$, then again by Theorem~\ref{main} $f$ is a $k$-Peirce idempotent for some $k\leq n$. In a representation of $f$ as a sum $\sum\limits_{j=1}^{k}f_j$ of $k$ pairwise orthogonal $1$-Peirce idempotents above, Proposition \ref{ortho} shows that, for each index $j$, the corresponding idempotent $\e_{i_j}$ associated with $f_j$ is a Peirce trivial idempotent of $e_iRe_i$, whence $\e_{i_j}=e_i$, as required. It is worth noting that, in view of Remark \ref{warning2}, $k > n$ can happen for Peirce idempotents $f$ which are not Peirce trivial. 
\end{proof}

One can see assertion (1) of Theorem \ref{peirce4} by using \cite[Theorem~5.7(2)]{abivw1}. By this result, $R$ is a $k$-Peirce ring for some $k, \ k\leq m\leq n$, whence $k=m=n$ by~Theorem~\ref{main}. 

As a consequence of the above proof we obtain immediately that $$\sum_{i\neq j}f_iRf_j\subseteq \sum_{i\neq j}e_iRe_j.$$ Since the role of $e_i$ and of $f_j$ are now quite symmetric, in view of Proposition \ref{ortho} and Theorem \ref{peirce4}, by interchanging the role of $f_i$ and of $e_i$ in the above inclusion, we obtain the equality 
$$\sum_{i\neq j}f_iRf_j=\sum_{i\neq j}e_iRe_j,$$
showing that:

\begin{corollary}
\label{equality} The ideal ${\frak D}(R)^-$ of an $n$-Peirce ring $R$ is independent of the choice of the set $\{e_1,e_2,\dots,e_n\}$ of $n$ pairwise orthogonal 1-Peirce idempotents with sum~1. In particular, a ring-theoretical direct sum  
$\sum\limits_i e_iRe_i$ of $n$ 1-Peirce subrings~$e_iRe_i$ is uniquely determined by~$R$ up to isomorphisms, i.e., independent of the choice of the corresponding $n$ pairwise orthogonal 1-Peirce idempotents whose sum is 1. Consequently, the  subrings $e_iRe_i, \ i=1, 2, \ldots, n,$ are also invariants of $R$ and the 
bimodules $_{e_iRe_i} {e_iRe_j} _{e_jRe_j}$ are uniquely determined up to bimodule isomorphisms, too.
\end{corollary}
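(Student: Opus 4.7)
The plan is to harvest what is essentially already proved in the lead-in discussion. Let $\{e_1,\ldots,e_n\}$ and $\{f_1,\ldots,f_m\}$ be two sets of pairwise orthogonal $1$-Peirce idempotents of $R$ whose sums equal $1$. By Theorem \ref{peirce4}(1) applied to the second set, we must have $m=n$, and there exist a permutation $\sigma$ of $\{1,\ldots,n\}$ and an invertible element $s\in R$ such that $f_{\sigma(i)}=se_is^{-1}$ for all $i$.

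For the invariance of $\mathfrak{D}(R)^-$, the inclusion $\sum_{i\neq j}f_iRf_j\subseteq \sum_{i\neq j}e_iRe_j$ has already been observed in the paragraph immediately preceding the corollary, as a direct consequence of the proof of Theorem \ref{peirce4}(2): each $f_j$ is congruent modulo $\sum_{i\neq j}e_iRe_j$ to an idempotent of the form $g_{i_j}\in e_{i_j}Re_{i_j}$, and in the sum-to-$1$ setting part (1) forces each $g_{i_j}=e_{i_j}$, so $f_j-e_{\sigma^{-1}(j)}\in \sum_{i\neq j}e_iRe_j$; the inclusion then follows. The argument used nothing that distinguishes $\{e_i\}$ from $\{f_j\}$, so interchanging their roles gives $\sum_{i\neq j}e_iRe_j\subseteq \sum_{i\neq j}f_iRf_j$. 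Equality follows, so $\mathfrak{D}(R)^-$ is independent of the chosen family.

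With invariance of $\mathfrak{D}(R)^-$ in hand, the remaining statements come from the obvious ring decomposition $R=\bigl(\sum_i e_iRe_i\bigr)\oplus \mathfrak{D}(R)^-$ (additive direct sum; the first summand is a subring since $e_iRe_i\cdot e_jRe_j=0$ for $i\neq j$). The natural projection identifies $\sum_i e_iRe_i$ with $R/\mathfrak{D}(R)^-$, and likewise $\sum_j f_jRf_j\cong R/\mathfrak{D}(R)^-$, so the two diagonal subrings are canonically isomorphic. For the component-wise statements, the inner automorphism $\varphi_s\colon x\mapsto sxs^{-1}$ of $R$ sends $e_i$ to $f_{\sigma(i)}$; restricting to Peirce components, it yields ring isomorphisms $e_iRe_i\xrightarrow{\sim} f_{\sigma(i)}Rf_{\sigma(i)}$ and bimodule isomorphisms $e_iRe_j\xrightarrow{\sim} f_{\sigma(i)}Rf_{\sigma(j)}$, compatible with the above ring isomorphisms on the diagonals.

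The main obstacle is essentially cosmetic: one must verify that the derivation of $\sum_{i\neq j}f_iRf_j\subseteq \sum_{i\neq j}e_iRe_j$ uses only the fact that the $f_j$'s form a pairwise orthogonal set of $1$-Peirce idempotents summing to $1$ (and hence applies with the roles reversed). Everything else is bookkeeping with Theorem \ref{peirce4}(1) and the direct sum decomposition $R=\bigl(\sum_i e_iRe_i\bigr)\oplus \mathfrak{D}(R)^-$; once symmetry is granted, the corollary is essentially immediate.
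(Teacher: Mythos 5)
Your proposal is correct and follows essentially the same route as the paper, which derives the corollary in the paragraph immediately preceding it: the inclusion $\sum_{i\neq j}f_iRf_j\subseteq \sum_{i\neq j}e_iRe_j$ from the proof of Theorem \ref{peirce4}, the reverse inclusion by symmetry, and the inner automorphism of Theorem \ref{peirce4}(1) (via Lemma \ref{inner}) for the componentwise ring and bimodule isomorphisms. Your added care about why the roles of the two families may be interchanged is a reasonable elaboration of the paper's terse ``the roles are now quite symmetric,'' not a different argument.
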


Unfortunately, the converse of this result is not true. However, in view of \cite[Theorem 5.7(2)]{abivw1}  every ring with a complete set
$\{e_1, e_2, \dots, e_n\}$ of pairwise orthogonal 1-Peirce idempotents $e_i$ is a $k$-Peirce ring for some $k\leq n$, whence $R$ admits a Wedderburn-like principal decomposition described in Corollaries 
\ref{structure1}
and \ref{equality} with another complete set of $k$ pairwise orthogonal
1-Peirce idempotents. Moreover, \cite[Theorem 5.7(2)]{abivw1} together with the above consequences provides a very handy tool to determine if certain rings are 1-Peirce rings. To state the criterion, we define an auxiliary
notion. A subset $X$ of a ring $R$ is said to be \emph{nilpotent of index} $n$ if its \emph{ring closure}, i.e., the smallest additive group of $R$ containing $X$ and closed under multipilcation is a nilpotent ring (without identity) of index $n$.

\begin{corollary}
\label{test} Let $R$ be a ring with a complete set $\{e_1, e_2, \dots, e_n\}$ of pairwise orthogonal 1-Peirce idempotents $e_i$, i.e.,
$\sum e_i=1, e_ie_j={\delta}_{ij}e_i$. Then $R$ is a $1$-Peirce ring if for every subset $I\subseteq \{1, 2, \dots, n\}$ of at least two elements,
the nilpotency index of ${{\frak D}_I}^-=\sum\limits_{i, j\in I, \ i\ne j} e_iRe_j$ is bigger than the cardinality of $I$.
\end{corollary}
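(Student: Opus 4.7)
The plan is a proof by contradiction. Assume $R$ is not $1$-Peirce; the goal is to locate a pair $\{i,j\} \subseteq \{1,\ldots,n\}$ for which the nilpotency index of ${\frak D}_{\{i,j\}}^-$ is at most $2$, which contradicts the hypothesis applied with $|I|=2$. The case $n=1$ is immediate since then $R = e_1Re_1$ is $1$-Peirce by hypothesis and the nilpotency condition is vacuous, so I restrict to $n \ge 2$.

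Assuming $R$ is not $1$-Peirce, I start by picking a Peirce trivial idempotent $f \in R$ with $f \ne 0, 1$. By Proposition \ref{obvi1}, $J := fR(1-f) + (1-f)Rf$ is an ideal of $R$, and a direct expansion using the Peirce trivial identities $fR(1-f)Rf = 0 = (1-f)RfR(1-f)$ together with $(fR(1-f))^2 = ((1-f)Rf)^2 = 0$ yields $J^2 = 0$; moreover $R/J \cong fRf \times (1-f)R(1-f) =: A \times B$. For each $i$ I apply Proposition \ref{ortho} (with its ``$e$'' being our $f$ and its ``$f$'' being $e_i$) to decompose $e_i = \alpha_i + \beta_i$ as a sum of two orthogonal idempotents that are both Peirce trivial in $e_iRe_i$. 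Since $e_iRe_i$ is $1$-Peirce by assumption its only Peirce trivial idempotents are $0$ and $e_i$, and combined with $\alpha_i\beta_i = 0$ and $\alpha_i + \beta_i = e_i$ this forces one of $\alpha_i, \beta_i$ to equal $0$ and the other $e_i$. Writing $g_i := fe_if$ and $h_i := (1-f)e_i(1-f)$, the explicit formulas from Proposition \ref{ortho} give $\alpha_i = 0 \Leftrightarrow g_i = 0$ and $\beta_i = 0 \Leftrightarrow h_i = 0$, so for every $i$ exactly one of $g_i$ and $h_i$ vanishes.

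Setting $I_A = \{i : g_i \ne 0\}$ and $I_B = \{i : h_i \ne 0\}$ then produces a disjoint partition $I_A \sqcup I_B = \{1,\ldots,n\}$, and since $\sum_i g_i = f \ne 0$ and $\sum_i h_i = 1-f \ne 0$, both $I_A$ and $I_B$ are nonempty. Choosing $i_0 \in I_A$ and $j_0 \in I_B$, under the map $R \to R/J \cong A \times B$ one has $\bar e_{i_0} = (g_{i_0}, 0)$ and $\bar e_{j_0} = (0, h_{j_0})$, hence $\bar e_{i_0}\bar R \bar e_{j_0} = 0 = \bar e_{j_0}\bar R \bar e_{i_0}$. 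Therefore $e_{i_0}Re_{j_0} + e_{j_0}Re_{i_0} \subseteq J$, and since $J^2 = 0$ this shows that ${\frak D}_{\{i_0, j_0\}}^-$ is nilpotent of index at most $2 = |\{i_0, j_0\}|$, contradicting the hypothesis. The hard part I anticipate is spotting the trichotomy ``$g_i = 0$ or $h_i = 0$'' for each $i$; it requires combining Proposition \ref{ortho}'s splitting of $e_i$ into Peirce trivial idempotents of $e_iRe_i$ with the defining property of $1$-Peirce rings, and once this disjointness is in hand the rest of the argument is routine matrix bookkeeping.
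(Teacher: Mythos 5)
Your proof is correct, and it takes a more self-contained route than the paper does. The paper offers no explicit argument for this corollary: it is presented as a consequence of Theorem~5.7(2) of the companion paper \cite{abivw1} (a ring with a complete set of $n$ pairwise orthogonal $1$-Peirce idempotents is a $k$-Peirce ring for some $k\leq n$) together with the preceding observation that for a $k$-Peirce ring the sets ${\frak D}_{\l_i}(R)^-$ attached to a complete dyadic set of partitions are square-zero ideals. You instead argue entirely inside the present paper: a proper Peirce trivial idempotent $f$ gives the square-zero ideal $J=fR(1-f)+(1-f)Rf$ via Proposition~\ref{obvi1}, and the decomposition $e_i=\a_i+\b_i$ of Proposition~\ref{ortho} combined with $1$-Peirceness of $e_iRe_i$ forces each $e_i$ to lie, modulo $J$, entirely in $fRf$ or entirely in $(1-f)R(1-f)$ --- your dichotomy ``$g_i=0$ or $h_i=0$'' is exactly the content of the explicit matrix forms of $\a_i$ and $\b_i$, and Corollary~\ref{product} confirms that $g_i=fe_if$ and $h_i=(1-f)e_i(1-f)$ are the relevant corner idempotents. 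All the individual steps check out: $J^2=0$ follows from $fR(1-f)Rf=0=(1-f)RfR(1-f)$ together with $(fR(1-f))^2=0=((1-f)Rf)^2$; both $I_A$ and $I_B$ are nonempty because $\sum_i g_i=f\neq 0$ and $\sum_i h_i=1-f\neq 0$; and since ${\frak D}_{\{i_0,j_0\}}^-\subseteq J$ with $J^2=0$, its ring closure coincides with itself and has square zero, so the nilpotency index is at most $2=\vert\{i_0,j_0\}\vert$. Note that your argument actually establishes a sharper statement than the one printed: only the two-element subsets $I$ need to satisfy the nilpotency condition, since the violating subset you produce is always a pair. That is consistent with the paper (the hypothesis over all $I$ with $\vert I\vert\geq 2$ is simply stronger than the hypothesis over pairs, so the stated corollary follows), and it is worth recording as a refinement; the larger subsets in the paper's formulation presumably reflect the dyadic-partition viewpoint rather than a logical necessity.
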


Note that ${{\frak D}_I}^-$ may not be nilpotent. Furthermore, it should be noted separately that \cite[Theorem 5.7(2)]{abivw1} is an efficient tool for constructing certain $n$-Peirce rings with prescribed properties.
In view of Corollary \ref{structure} one can refine the definition of $n$-Peirce rings by including a complete dyadic set of partitions of~$\{1,2,\ldots,n\}$ as an additional invariant.

\begin{definition}
\label{peirce1} A ring $R$ is called an $n$-Peirce ring associated with a complete dyadic set of partitions $\l_0=\{I\}\subseteq \l_1\subseteq \l_2\subseteq \cdots \subseteq \l_k$ of $\{1,2,\ldots,n\}$ if  
there are $n$ pairwise orthogonal $1$-Peirce idempotents $e_1, \ldots, e_n$ with sum 1 such that for an exceptional element $I$ of $\l_i, \ i=0,\ldots,k-1,$ which is the union of two elements~$J$ and~$L$ of $\l_{i+1}$, $ \ e_J$ is a Peirce trivial idempotent of the subring $e_IRe_I$. 
\end{definition}

It is worth noting that an $n$-Peirce ring in the sense of Definition \ref{peirce} can admit different complete dyadic sets of partitions of $\{1,2,\ldots,n\}$.
It is quite an  interesting combinatorial question to determine all complete dyadic set of partitions of $\{1,2,\ldots,n\}$ for an $n$-Peirce ring. This freedom would provide room for a combinatorial description of certain automorphisms of $n$-Peirce rings. 

To justify Definition \ref{peirce1} we give an example of a 4-Peirce ring associated with the complete dyadic set $\Bigl\{\l_0=\bigl\{\{1, 2, 3, 4\}\bigr\}, \l_1=\bigl\{\{1, 2\},\{3, 4\}\bigr\},  \l_2=\bigl\{\{1\}, \{2\}, \hfill\break\{3, 4\}\bigr\}, \l_3=\bigl\{\{1\}, \{2\}, \{3\}, \{4\}\bigr\}\Bigr\}$ of partitions of $\{1,2,3,4\}$ which does not contain a Peirce trivial 3-Peirce idempotent. Consider the field $K=\Z_2$ of 2 elements, together with the trivial bilinear forms $[-,-]=(-,-):\Z_2\otimes_{\Z_2} \Z_2\rightarrow \Z_2$, and let $A=B=\left[ \begin{array}{cc}
K & K \\
K & K \end{array}\right]$ be the generalized matrix ring induced by these
trivial bilinear forms. Let $M=N=A=B$, considering $_AM_B$ and $_BN_A$ as bimodules equipped with the trivial bilinear form $(-,-)_B: M\otimes_B N\rightarrow A, [-,-]_A: N\otimes_A M\rightarrow B$. Now the generalized matrix ring $R = \left[ \begin{array}{cc}
A & M \\
N & B \end{array}\right]$ induced by these bilinear forms is the required
example, as is easily verified by using the method described in Remark \ref{warning2}.

Since a complete dyadic set associated with a 2-Peirce or a 3-Peirce ring is unique, or equivalently,
a Peirce trivial idempotent that defines a 2-Peirce or 3-Peirce ring, can be chosen to be a 1-Peirce idempotent, we can make the definition of a complete dyadic set essentially simpler as follows.

\begin{definition}
\label{dyadic1} A set
$\Lambda=\bigl\{\l_0=\{I\}, \l_1, \l_2, \cdots, \l_k\bigr\}$ of partitions
$\l_i$ of a finite nonempty set $I$ is called a \emph{reduced dyadic set of partitions} if $\l_{i+1}$ is a dyadic refinement of $\l_i$ for all $i=0,\ldots, k-1$ and all elements of $\l_k$ are sets having at most three elements. 
\end{definition}

Definition \ref{dyadic1} simplifies Corollary \ref{structure} as
\begin{corollary}
\label{structure0} A ring $R$ is an $n$-Peirce ring if and only if there are $n$ pairwise orthogonal $1$-Peirce idempotents $e_1, \ldots, e_n$ with sum 1 and a reduced dyadic set~$\L=\biggl\{\l_0=\bigl\{\{1, 2, \ldots, n\}\bigr\},  \l_1, \l_2, \cdots, \l_k\biggr\}$ of partitions of $\{1, 2, \ldots, n\}$ such that for an exceptional element $I$ of $\l_i, \ i=0,\ldots,k-1,$ which is the union of two elements $J$ and $L$ of $\l_{i+1},  \ e_J$ is a Peirce trivial idempotent of the subring $e_IRe_I$ and for all  subsets $J$ in $\l_k$ the ring $e_JRe_J$ is a $\vert J\vert$-Peirce ring  where $\vert J\vert\in \{1, 2, 3\}$ denotes the cardinality of $J$. 
\end{corollary}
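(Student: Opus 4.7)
The plan is to reduce both directions to Corollary \ref{structure} and Definition \ref{peirce}, exploiting the facts that every complete dyadic set is \emph{a fortiori} a reduced dyadic set, and that the refinement tree of a reduced dyadic set can be traversed upward one level at a time using Definition \ref{peirce} directly.

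For the necessity, I would observe that if $R$ is an $n$-Peirce ring, then Corollary \ref{structure} furnishes $n$ pairwise orthogonal $1$-Peirce idempotents $e_1,\ldots,e_n$ with sum $1$ together with a complete dyadic set of partitions of $\{1,2,\ldots,n\}$ whose refinement steps satisfy the required Peirce trivial condition. Since every element of the terminal partition $\lambda_k$ is a singleton $\{i\}$, its cardinality $1$ lies in $\{1,2,3\}$, and $e_{\{i\}}Re_{\{i\}}=e_iRe_i$ is a $1$-Peirce ring by the definition of $1$-Peirce idempotent. Hence this complete dyadic set already qualifies as a reduced dyadic set meeting the hypotheses of Corollary \ref{structure0}.

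For the sufficiency, given the data of the corollary, I would prove by reverse induction on $i=k,k-1,\ldots,0$ the statement: for every block $I\in\lambda_i$, the subring $e_IRe_I$ is an $|I|$-Peirce ring. The base case $i=k$ is precisely the standing hypothesis that $e_JRe_J$ is a $|J|$-Peirce ring for every $J\in\lambda_k$ with $|J|\in\{1,2,3\}$. For the inductive step, let $I\in\lambda_i$ be the (unique) exceptional element, so that $I=J\cup L$ disjointly with $J,L\in\lambda_{i+1}$. By hypothesis $e_J$ is a Peirce trivial idempotent of $e_IRe_I$, and $e_I-e_J=e_L$. The inductive hypothesis, applied at level $i+1$, asserts that $e_JRe_J$ is $|J|$-Peirce and $e_LRe_L$ is $|L|$-Peirce. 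Definition \ref{peirce} applied to the idempotent $e_J$ inside $e_IRe_I$ (with $m=|J|$ and $|I|-m=|L|$) then makes $e_IRe_I$ an $|I|$-Peirce ring. The non-exceptional blocks of $\lambda_i$ already appear verbatim in $\lambda_{i+1}$, so the inductive conclusion is inherited for them with no further work. Specializing to $i=0$ gives $R=e_{\{1,\ldots,n\}}Re_{\{1,\ldots,n\}}$ as an $n$-Peirce ring.

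I do not anticipate any real obstacle. The only point demanding attention is that terminal blocks of size $2$ or $3$ are treated as genuine base cases rather than being refined further; this is exactly what the corollary permits, and the remark preceding Definition \ref{dyadic1} (that a $2$- or $3$-Peirce ring admits the requisite dyadic structure) justifies why allowing such blocks at the bottom level costs nothing in descriptive power.
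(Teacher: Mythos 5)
Your argument is correct and follows exactly the route the paper intends: the paper states Corollary~\ref{structure0} without proof as an immediate simplification of Corollary~\ref{structure}, and your two directions (a complete dyadic set is already a reduced one with singleton terminal blocks; a reverse induction up the refinement tree using Definition~\ref{peirce} at each exceptional block) are just the explicit writing-out of that reduction. No gaps.
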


In the last part of this section we describe automorphisms of certain
$n$-Peirce rings associated with a complete dyadic set of partitions, generalizing the notion of strongly generalized triangular matrix rings. First we need the following definition.
 
\begin{definition}
\label{peirce2} An idempotent $e=e^2$ in a ring $R$ is called a \emph{strict 1-Peirce} idempotent if it is Peirce trivial and $eRe$ is a $1$-Peirce ring. Also, $e$ is called a {\it strict n-Peirce} idempotent if $e$ is Peirce trivial and $eRe$ is an $n$-Peirce ring. A ring $R$ is called  inductively a {\it strict $n$-Peirce} ring if there is a strict 1-Peirce $e\in R$ such that $(1-e)R(1-e)$ is a strict $(n-1)$-Peirce ring. {\it Strict 1-Peirce} rings are precisely $1$-Peirce rings, whence strict 2- and strict 3-Peirce rings coincide also with 2- and 3-Peirce rings, respectively.
\end{definition}

Strict $n$-Peirce rings are precisely $n$-Peirce rings associated with a complete dyadic set $\{\l_1,\ldots,\l_n\}$ of partitions of $\{1,2,\ldots,n\}$ given by $\l_k=\bigl\{\{1\}, \{2\},\ldots, \hfill\break \{k-1\}, I_k\bigr\}, \ k=1,\ldots, n,$ where $I_k=\{k, \ldots, n\}$.
Therefore strict $n$-Peirce rings are natural extensions of strongly generalized triangular matrix rings (see \cite{avw2}), or in alternative terminology, rings with a complete set of triangulating idempotents (see~\cite{bihkipa}).

\begin{remark}
\label{primitive} It is worth emphasizing the subtle difference between $1$-Peirce idempotents and strict $1$-Peirce idempotents. The former are not necessarily Peirce trivial while the latter are such idempotents. For example, all proper idempotents in a matrix rings over a division ring are $1$-Peirce idempotents, but they are never strict $1$-Peirce idempotents!
\end{remark}

In order to obtain a description of  isomorphisms between strict $n$-Peirce rings, one needs some technical preparation. 

Let $A$ be a strict $m$-Peirce ring defined by an ordered sequence $e_1, \ldots, e_m$ of pairwise
orthogonal Peirce idempotents with sum~1 such that every $e_i, \ i=1,\ldots,m-1,$ is Peirce trivial in the subring $A_i=(e_i+\cdots +e_m)A(e_i+\cdots +e_m).$ Then $A_1=A$. Letting $R_i=e_iAe_i$, we have $A_m=R_m$.  

Next, let $B$ be another strict $n$-Peirce ring defined by an ordered sequence $f_1, \ldots, f_n$ of pairwise orthogonal Peirce idempotents 
with sum~1 such that every $f_i, \ i=1,\ldots,n-1,$ is Peirce trivial in the subring $B_i=(f_i+\cdots +f_n)B(f_i+\cdots +f_n).$ Then $B_1=B$. Letting $S_i=f_iBf_i$, we have $B_n=S_n$. 

If $\sigma$ is any permutation of $\{1, \ldots, n\}$, put $f^\sigma_1:=f_{\sigma(1)}, \ldots, f^\sigma_n:=f_{\sigma (n)}$. According to this notation, if we write $g_i = f^\sigma_i$, then one can identify the above convention as follows: 
$$ S^{\sigma}_i:=S_{\sigma(i)}  = g_iBg_i,   B^\sigma_i:=B_{\sigma (i)} = (g_i+ \cdots + g_n)B(g_i + \cdots + g_n). $$

We are now in a position to describe isomorphisms between strict $n$-Peirce rings (see [3, Theorem]).

\begin{theorem}
\label{iso}
Let $A$ and $B$ be strict $m$- and strict $n$-Peirce rings defined by ordered sequences $e_1, \ldots, e_m$ and
$f_1, \ldots, f_n$ of pairwise orthogonal $1$-Peirce idempotents (with sum 1 in both cases) associated with the complete dyadic sets $\{\l_1,\l_2,\ldots,\l_m\}$ and $\{\l'_1,\l'_2,\ldots,\l'_n\}$ of partitions of $\{1,2,\ldots,m\}$ and $\{1,2,\ldots,n\}$, respectively, where $\l_k=\{\{1\}, \{2\},.., \{k-1\}, I_k\}, \ k=1,\ldots,m,$ and $\l'_{k'}=\{\{1\}, \{2\},.., \{k'-1\}, I_{k'}\}, \ k'=1,\ldots,n$, with $I_k=\{k, \ldots, m\}$ and $I_{k'}=\{k', \ldots, n\}$. Then $A$ and~$B$ are isomorphic via an isomorphism $\varphi: A \rightarrow B$ iff $m=n$ and there is a permutation
$\sigma$ of $\{1, \ldots, m\}$ together with
ring isomorphisms $\rho_i: R_i=e_iAe_i \rightarrow S^\sigma_i = 
S_{\sigma (i)}=f^\sigma_iBf^\sigma_i=f_{\sigma(i)}Bf_{\sigma(i)}, \ i=1,\ldots, m=n,$ and for $i = 1, \ldots, m-1$ there are elements 
$m_i \in M^\sigma_i=f^\sigma_iB^\sigma_i(1-f^\sigma_i)$ and $n_i\in  N^\sigma_i=(1-f^\sigma_i)B^\sigma_if^\sigma_i, $ 
ring isomorphisms $\varphi_{i+1}: A_{i+1} \rightarrow
B^\sigma_{i+1}, \ R_i-A_{i+1}$-bimodule isomorphisms
$\chi_i : e_iA_i(1-e_i) \rightarrow M^\sigma_i$ and $A_{i+1}-R_i$-bimodule
isomorphisms
$\delta_i :(1-e_i)A_ie_i \rightarrow N^\sigma_i,$
with respect to $\rho_i$ and $\varphi_{i+1}$, such that 
for $i =1, \ldots, m-1$ and 
$a_i = \left[ \begin{array}{cc} r_i & x_i \\
y_i & a_{i+1} \end{array} \right] \in A_i,$

$$\varphi_i(a_i) = \left[ \begin{array}{cc}
\rho_i(r_i) & \rho_i(r_i)m_i+ \chi_i (x_i) - m_i \varphi_{i+1}(a_{i+1})\\
n_i\rho_i(r_i)+\delta_i(y_i)-n_i\varphi_{i+1}(a_{i+1}) & \varphi_{i+1}(a_{i+1}) \end{array}\right].$$

\noindent Moreover, all isomorphisms between isomorphic rings $A$ and $B$ can
be described in this manner. (Keep in mind that $\varphi_1 = \varphi, \ 
\varphi_m = \rho_m; \ A_m  = R_m.$) In particular, the automorphism group of a strict $n$-Peirce ring can be inductively described in terms of ones of $1$-Peirce subrings and of related bimodules.
\end{theorem}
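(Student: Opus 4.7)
The plan is to handle the two implications separately: sufficiency is a direct verification, while necessity is the main content and is proved by induction on $m$.

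For sufficiency, I would define $\varphi_i\colon A_i\to B^{\sigma}_i$ recursively, starting from $\varphi_m=\rho_m$ and using the displayed formula to build $\varphi_i$ from the already-constructed $\varphi_{i+1}$. Additivity is immediate; multiplicativity is checked by a mechanical block-matrix expansion using the identities $f^{\sigma}_i m_i = m_i = m_i(1-f^{\sigma}_i)$ and $n_i f^{\sigma}_i = n_i = (1-f^{\sigma}_i)n_i$, together with the compatibility of $\chi_i$ and $\delta_i$ with the bimodule actions via $\rho_i$ and $\varphi_{i+1}$. Bijectivity is obtained inductively by writing down the inverse from the inverses of the component isomorphisms.

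For necessity, assume $\varphi\colon A\to B$ is a ring isomorphism. By Corollary \ref{additive} the Peirce dimension is an invariant, so $m=n$. Since $\{\varphi(e_1),\ldots,\varphi(e_m)\}$ is a complete set of pairwise orthogonal $1$-Peirce idempotents of $B$, Theorem \ref{peirce4}(1) supplies a permutation $\sigma$ of $\{1,\ldots,m\}$ and an invertible $s\in B$ with $s\varphi(e_i)s^{-1}=f_{\sigma(i)}=f^{\sigma}_i$. Set $\tilde\varphi := \mathrm{inn}_s\circ\varphi$; then $\tilde\varphi(e_i)=f^{\sigma}_i$, and since Peirce triviality is transported along $\tilde\varphi$, the reordered sequence $f^{\sigma}_1,\ldots,f^{\sigma}_m$ endows $B$ with a strict $m$-Peirce structure matched to the one on $A$.

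Induction on $m$ now completes the argument. The base case $m=1$ is trivial with $\rho_1=\varphi$. For $m>1$, the equality $\tilde\varphi(e_1)=f^{\sigma}_1$ forces $\tilde\varphi$ to respect the Peirce decomposition induced by $e_1$; its restrictions produce $\rho_1$, bimodule isomorphisms $\tilde\chi_1, \tilde\delta_1$, and a ring isomorphism $\varphi_2\colon A_2\to B^{\sigma}_2$ to which the inductive hypothesis applies. In this reduced form $\tilde\varphi$ has exactly the displayed shape with all correction terms $m_i,n_i$ vanishing. Passing back via $\varphi = \mathrm{inn}_{s^{-1}}\circ\tilde\varphi$ amounts to conjugating by $s^{-1}$; decomposing $s^{-1}$ through its Peirce components and expanding the conjugation block by block at each level produces the off-diagonal correction terms, with the elements $m_i\in M^{\sigma}_i$ and $n_i\in N^{\sigma}_i$ read off via Proposition \ref{basic}, and $\chi_i,\delta_i$ emerging as the corresponding adjustments of $\tilde\chi_i,\tilde\delta_i$. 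The main obstacle I anticipate is precisely this last bookkeeping step: verifying that conjugation by $s^{-1}$ distributes coherently across all $n-1$ nested levels so as to yield the displayed correction pattern in each row without spurious cross terms. The automorphism-group assertion is then the immediate specialization $A=B$.
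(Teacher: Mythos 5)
Your proposal is correct in outline but takes a genuinely different route for the necessity direction, and the step you yourself flag as the ``main obstacle'' is exactly where the paper's proof does its real work by other means. The paper never introduces a global conjugating unit: it analyzes $\varphi(e_1)$ directly as a strict $1$-Peirce idempotent of $B$, uses the $\a+\b$ decomposition of Proposition~\ref{ortho} (one summand must vanish because $\varphi(e_1)B\varphi(e_1)$ is $1$-Peirce) to locate $\sigma(1)$ by an iterative peeling process, concludes that $\varphi(e_1)=\left[\begin{smallmatrix}1&m_1\\ n_1&0\end{smallmatrix}\right]$ relative to $f_{\sigma(1)}$, and then reads the displayed formula straight off Proposition~\ref{basic}'s explicit description of $\varphi(e_1)B\varphi(e_1)$, $(1-\varphi(e_1))B(1-\varphi(e_1))$ and the off-diagonal pieces. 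In that route the correction terms $m_1,n_1$ are simply the off-diagonal Peirce components of $\varphi(e_1)$, so no bookkeeping with a unit is needed. Your route (normalize by $\mathrm{inn}_s$ using Theorem~\ref{peirce4}(1), get the clean diagonal-respecting $\tilde\varphi$, then undo the conjugation) buys a cleaner statement of where $\sigma$ comes from and makes Theorem~\ref{innerauto} an input rather than a byproduct, but it defers all the difficulty to the de-conjugation step.

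That step is closable, but not by a generic ``expand block by block'': conjugation of the clean form by an arbitrary unit does not preserve the displayed shape (the $(1,1)$ entry of the display depends only on $r_1$, which a general conjugation destroys). What saves you is that at each level $i$ the Peirce triviality of $f^{\sigma}_i$ in $B^{\sigma}_i$ gives $M^{\sigma}_iN^{\sigma}_i=0=N^{\sigma}_iM^{\sigma}_i$, so any unit of $B^{\sigma}_i$ has invertible diagonal Peirce components and factors as a block-diagonal unit times a ``unipotent'' unit $\left[\begin{smallmatrix}1&m_i\\ n_i&1\end{smallmatrix}\right]$; the block-diagonal factor is absorbed into $\rho_i$ and $\varphi_{i+1}$ and recursed into the corner $B^{\sigma}_{i+1}$, while the unipotent factor produces exactly the $m_i,n_i$ corrections. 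You should either carry out this factorization explicitly or, more economically, abandon the conjugation altogether and argue as the paper does from the shape of the idempotent $\varphi(e_i)$ itself.
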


\begin{proof} Assume that $A$ and $B$ are isomorphic via $\varphi$. Then $\varphi(e_1)$ is a strict 1-Peirce idempotent in $B$. Therefore, as in the proof of Theorem \ref{main}, for the unique generalized matrix representation $\varphi(e_1) = \left[ \begin{array}{cc}
s_1 & m \\
n & b_2 \end{array}\right]$, with uniquely determined elements $s_1\in S_1, b_2\in B_2, m\in f_1B(1-f_1)$ and $n\in (1-f_1)Bf_1,$ by putting
$\a = \left[ \begin{array}{cc}
s_1 & s_1m \\
ns_1 & 0 \end{array}\right]$ and $\b = \left[ \begin{array}{cc}
0 & mb_2 \\
b_2n & b_2 \end{array}\right]$, one has that $\varphi(e_1)=\a+\b, \ {\a}{\b}={\b}{\a}=0,$ and $\a$ and $\b$ are Peirce trivial idempotents of $\varphi(e_1)B\varphi(e_1)$ as well as  of $S_1$ and~$B_2$, respectively.
Since $\varphi(e_1)B\varphi(e_1)$ is a $1$-Peirce ring, one of $\a$ and $\b$ must be 
0. If $\b=0$, then $\varphi(e_1)=\a$, and hence in this case one has $s_1=f_1$, the identity element of~$S_1$, and one puts $\sigma(1)=1$. If $\a=0$, then $\varphi(e_1)=\b$, and $b_2$ is a strict 1-Peirce idempotent of both $B_1=S$ and $B_2$. 

In this situation, one can repeat the process. Therefore after finitely many steps 
there exists a natural number $j=\sigma(1)$ such that $f_j$ is a strict 1-Peirce idempotent of $S$, for each $k<j$ there are elements $x_k\in f_kSf_j$  and $v_k\in f_jSf_k$, and for each $k>j$ there are elements $u_k\in f_jSf_k$ and $y_k\in f_kSf_j$ such that 

$$\varphi(e_1)=\left[ \begin{array}{ccccccccccc}
0&0&\cdots&\cdots&0&x_1&0&\cdots&\cdots&\cdots&0\\
0&0&0&\cdots&0&x_2&\vdots&\ddots&&&\vdots\\
\vdots&\ddots&\ddots&\ddots&\vdots&\vdots&\vdots&&\ddots&&\vdots\\
\vdots&&\ddots&\ddots&0&\vdots&\vdots&&&\ddots&\vdots\\
0&\cdots&\cdots&0&0&x_{j-1}&0&\cdots&\cdots&\cdots&0\\
v_1&v_2&\cdots&\cdots&v_{j-1}&1&u_{j+1}&u_{j+2}&\cdots&\cdots&u_n\\
0&\cdots&\cdots&\cdots&0&y_{j+1}&0&0&\cdots&\cdots&0\\
\vdots&\ddots&&&\vdots&y_{j+2}&0&0&0&\cdots&0\\
\vdots&&\ddots&&\vdots&\vdots&\vdots&\ddots&\ddots&\ddots&\vdots\\
\vdots&&&\ddots&\vdots&\vdots&\vdots&&\ddots&\ddots&0\\
0&\cdots&\cdots&\cdots&0&y_n&0&\cdots&\cdots&0&0\\
 \end{array}\right],$$

\noindent or equivalently, for $$m_1=x_1+\cdots +x_{j-1}+u_{j+1}+\cdots +u_n\in M^\sigma_1=M_{\sigma(1)}=f_jB(1-f_j)$$ and $$n_1=v_1+\cdots+ v_{j-1}+y_{j+1}+\cdots + y_n\in N^\sigma_1=N_{\sigma(1)}=
(1-f_j)Bf_j,$$  
$$\varphi(e_1)=\left[ \begin{array}{cc}
1 & m_1 \\
n_1 & 0 \end{array}\right] \in \left[ \begin{array}{cc}
f_jBf_j & f_jB(1-f_j) \\
(1-f_j)Bf_j & (1-f_j)B(1-f_j) \end{array}\right].$$
Therefore $\varphi$ induces  ring isomorphisms $\rho_1:e_1Ae_1=R_1\rightarrow 
\varphi(e_1)B\varphi(e_1)\cong B^\sigma_1=f_jBf_j=S_j=B^\sigma_1$ and $\varphi_2:A_2=(1-e_1)A(1-e_1)\rightarrow (1-\varphi(e_1))B(1-\varphi(e_1))\cong (1-f_j)B(1-f_j)=B^\sigma_2$, and the restrictions of $\varphi$ to $e_1A(1-e_1)$ and $(1-e_1)Ae_1$ define the bimodule isomorphisms $\chi_i$ and $\delta_i$ to $M^\sigma_1$ and $N^\sigma_1$, respectively. Consequently, if
$$a=a_1 = \left[ \begin{array}{cc} r_1 & x_1 \\
y_1 & a_2 \end{array} \right] \in A=A_1 = \left[ \begin{array}{cc} e_1Ae_1 & e_1A(1-e_1) \\
(1-e_1)Ae_1 & (1-e_1)A(1-e_1) \end{array} \right],$$ then 

$$\varphi(a) = \varphi_1(a_1)=\varphi(r_1)+\varphi(x_1)+\varphi(y_1)+\varphi(a_2)=$$

$$=\left[ \begin{array}{cc}
\rho_1(r_1)  & \rho_1(r_1)m_1+ \chi_1 (x_1) - m_1 \varphi_2(a_2)\\
n_1\rho_1(r_1)+\delta_1(y_1)-n_1\varphi_2(a_2) &  \varphi_2(a_2) \end{array}\right].$$
The theorem follows now easily by reduction.
\end{proof}
\smallskip

Observing that the proof of Theorem \ref{iso} also shows, for each index $i$, that  
$$\varphi(e_i)=\left[ \begin{array}{cc}
1 & m_i \\
n_i & 0 \end{array}\right] \in \left[ \begin{array}{cc}
S_{\sigma(i)} & f_{\sigma(i)}B(f_{\sigma(i+1)}+\cdots+f_{\sigma(n)}) \\
(f_{\sigma(i+1)}+\cdots+f_{\sigma(n)})Bf_{\sigma(i)} & B^{i+1}_{\sigma} \end{array}\right],$$ 
one obtains the following result in view of Lemma \ref{inner}.

\begin{theorem}
\label{innerauto} Let $R$ be a strict $n$-Peirce ring defined by two sequences
\newline $e_1, \ldots, e_n$ and $f_1, \ldots, f_n$ of pairwise orthogonal $1$-Peirce idempotents with sum 1 in each case. Then there is a permutation
$\sigma$ of $\{1,2,\ldots,n\}$ and a unit $s\in R$ such that $se_is^{-1}=f_{\sigma(i)}$ for all $i=1, \ldots, n$. 
\end{theorem}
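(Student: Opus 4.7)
The plan is to derive, for each $i$, a left $R$-module isomorphism $Re_i \cong Rf_{\sigma(i)}$ from the structural description supplied by Theorem \ref{iso}, and then invoke Lemma \ref{inner}. To start, I would apply Theorem \ref{iso} to the identity automorphism $\varphi=\mathrm{id}_R$, regarding the domain as the strict $n$-Peirce ring defined by $e_1,\dots,e_n$ and the codomain as that defined by $f_1,\dots,f_n$. The theorem, together with the explicit form of $\varphi(e_i)$ recorded in the observation immediately preceding its statement, yields a permutation $\sigma$ of $\{1,\dots,n\}$ and, for each $i$, elements $m_i\in f_{\sigma(i)}B^{\sigma}_i(1-f_{\sigma(i)})$ and $n_i\in(1-f_{\sigma(i)})B^{\sigma}_if_{\sigma(i)}$ such that, in the Peirce decomposition of $B^\sigma_i$ with respect to $f_{\sigma(i)}$,
$$e_i\;=\;\left[\begin{array}{cc} 1 & m_i \\ n_i & 0 \end{array}\right].$$

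Next, I would extract by one-line matrix multiplications the two identities
$$f_{\sigma(i)}\,e_i\,f_{\sigma(i)}=f_{\sigma(i)},\qquad e_i\,f_{\sigma(i)}\,e_i=e_i;$$
the only nontrivial point is that $n_im_i=0$ in the second one, which is immediate from $e_i^2=e_i$ (inspecting the $(2,2)$-block of $e_i^2$) or, equivalently, from the Peirce triviality of $f_{\sigma(i)}$ in $B^\sigma_i$. These identities make right multiplication by $f_{\sigma(i)}e_i$ and by $e_if_{\sigma(i)}$ mutually inverse left $R$-module maps $Re_i\to Rf_{\sigma(i)}$ and $Rf_{\sigma(i)}\to Re_i$; alternatively, this is precisely Proposition \ref{basic} applied inside $B^\sigma_i$. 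Hence ${_RRe_i}\cong{_RRf_{\sigma(i)}}$ for every $i$.

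The collections $\{e_1,\dots,e_n\}$ and $\{f_{\sigma(1)},\dots,f_{\sigma(n)}\}$ are now complete sets of pairwise orthogonal idempotents of $R$ with sum~$1$, matched index by index by these isomorphisms, so Lemma \ref{inner} produces a unit $s\in R$ with $se_is^{-1}=f_{\sigma(i)}$ for every $i$, which is the assertion of the theorem. The main obstacle is the bookkeeping needed to read off the matrix form of $e_i$ cleanly from Theorem \ref{iso}: the block description lives inside the subring $B^\sigma_i$ rather than in all of $R$, yet the identities it produces are to be used globally. Once this distinction is kept in mind, the rest of the argument is routine.
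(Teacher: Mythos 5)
Your proposal is correct and follows essentially the same route as the paper: the paper derives the theorem precisely by observing that the proof of Theorem \ref{iso} (applied to the identity map between the two defining sequences) exhibits each $e_i$ in the form $\left[\begin{smallmatrix}1 & m_i\\ n_i & 0\end{smallmatrix}\right]$ relative to $f_{\sigma(i)}$, and then invoking Lemma \ref{inner}. Your explicit verification that $f_{\sigma(i)}e_if_{\sigma(i)}=f_{\sigma(i)}$ and $e_if_{\sigma(i)}e_i=e_i$ (via $n_im_i=0$) yield the needed module isomorphisms $_RRe_i\cong{_RRf_{\sigma(i)}}$ just fills in the step the paper delegates to Proposition \ref{basic}.
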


Observe that not every permutation can occur in the description of the isomorphisms between and automorphisms of $n$-Peirce rings. Of course, such permutations form a subgroup of the symmetric group and this subgroup leaves invariant the class of all complete dyadic sets of partitions defining $R$. 

We conclude this section with some remarks on the automorphism group of an $n$-Peirce ring $R$ together with a complete set $I=\{e_1, e_2, \dots, e_n\}$ of pairwise orthogonal
1-Peirce idempotents. Any automorphism $\phi$ of $R$ transforms $I$ to the complete set $I_{\phi}=\{\phi(e_1), \phi(e_2), \dots, \phi(e_n)\}$ of pairwise orthogonal 1-Peirce idempotents. Therefore by Theorem \ref{peirce4} $\phi$ determines uniquely the permutation $\sigma_{\phi}$ and a unit $s_{\phi}$ such that $\phi(e_i)=s^{-1}_{\phi}e_{\sigma_{\phi}(i)}s_{\phi}$ for every~$i$. However, $s_{\phi}$ is not uniquely determined by $\phi$. To simplify notation, we write $\sigma$ and $s$ for $\sigma_{\phi}$ and $s_{\phi}$, respectively. These permutations $\sigma$ form a subgroup $\Omega_R$ of the symmetric group leaving invariant the class of all complete dyadic sets of partitions. It is clear that $\phi$ induces the automorphisms $\rho_i$ between subrings $R_i=e_iRe_i$ and $R^{\sigma}_i=e_{\sigma(i)}Re_{\sigma(i)}$ and the $(\rho_i,\rho_j)$-bimodules isomorphisms $\chi_{ij}$ between bimodules $_{R_i}e_iRe_{R_j}=R_{ij}$ and
$R^{\sigma}_{ij}=e_{\sigma(i)}Re_{\sigma(j)}$. One can describe automorphisms of $R$ in terms of permutations from $\Omega_R$ and isomorphisms
$\rho_i, \chi_{ij}$ and units $s$ in the way similar to one given in Theorem \ref{iso} by using a complete dyadic set of partitions defining $R$.

\section{Lifting process}
\smallskip

If $e^2=e\in R$ is a Peirce trivial idempotent, then 
$(ReR(1-e)R)^2=0=(R(1-e)ReR)^2$, whence $R$
is a direct product of the rings  $eRe$ and $(1-e)R(1-e)$, provided that $R$ is semiprime. This observation implies the following result, which is basic in the lifting process concerning the structure of rings. Recall from \cite{gw}, $R/\rho(R)$ is a semiprime ring for any supernilpotent radical $\rho$. The collection of supernilpotent radicals includes the prime, nil, Levitzki, Jacobson, and Brown-McCoy radicals.

\begin{theorem}
\label{observation} A semiprime $n$-Peirce ring is a direct product of $n$ semiprime $1$-Peirce rings (i.e., of $n$ semiprime indecomposable rings). In particular, an $n$-Peirce ring which is 
$\rho$-semisimple, for a supernilpotent radical $\rho$ (see \cite{gw}), is a direct product of~$n$ $\rho$-semisimple $1$-Peirce rings.
\end{theorem}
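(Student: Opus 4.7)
The plan is to prove the theorem by induction on $n$, with the key input being the observation immediately preceding the statement together with Remark \ref{recentral} (which tells us that in a semiprime ring every Peirce trivial idempotent is central).

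For the base case $n=1$ there is nothing to show, and a $1$-Peirce semiprime ring is indecomposable by Remark \ref{recentral}, since any central idempotent is Peirce trivial. For the inductive step, assume $n>1$ and that the theorem holds for all smaller values. Let $R$ be a semiprime $n$-Peirce ring, and choose by Definition \ref{peirce} a Peirce trivial idempotent $e\in R$ such that $eRe$ is an $m$-Peirce ring and $(1-e)R(1-e)$ is an $(n-m)$-Peirce ring, with $1\le m<n$. By the pre-theorem observation, $(ReR(1-e)R)^{2}=0=(R(1-e)ReR)^{2}$; since $R$ is semiprime, both $ReR(1-e)R$ and $R(1-e)ReR$ must vanish. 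This forces $eR(1-e)=0=(1-e)Re$, so $e$ is central and $R\cong eRe\times(1-e)R(1-e)$ as a ring direct product.

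Each factor of this direct product is itself semiprime: any ideal of $eRe$ or of $(1-e)R(1-e)$ extends (via the product decomposition) to an ideal of $R$ with the same square, so a square-zero ideal in a factor would give a square-zero ideal of $R$. Applying the induction hypothesis to $eRe$ and $(1-e)R(1-e)$ expresses each as a direct product of $m$ and $n-m$ semiprime $1$-Peirce rings respectively, yielding the desired decomposition of $R$ into $n$ semiprime $1$-Peirce factors. Each such factor is indecomposable as a ring by the base case.

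For the second assertion, recall that every supernilpotent radical $\rho$ contains the prime radical, so $\rho(R)=0$ implies that $R$ is semiprime. Thus the first part applies and gives $R\cong R_1\times\cdots\times R_n$ with each $R_i$ a semiprime $1$-Peirce ring. Since $\rho$ is hereditary (supernilpotent radicals are hereditary; see \cite{gw}), it respects finite ring direct products, so $0=\rho(R)=\rho(R_1)\times\cdots\times\rho(R_n)$ forces $\rho(R_i)=0$ for each $i$, completing the proof.

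The essential content is entirely in the first paragraph: the semiprimeness hypothesis, via the pre-theorem square-zero calculation, upgrades every Peirce trivial idempotent to a central one, at which point the Peirce decomposition becomes a ring direct product and the induction is automatic. There is no serious obstacle; the only point to handle with a little care is verifying that the factors inherit semiprimeness (respectively $\rho$-semisimplicity), which is standard once the decomposition is known to be a ring-theoretic direct product.
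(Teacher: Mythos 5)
Your proof is correct and follows essentially the same route as the paper: the paper's entire argument is the pre-theorem observation that $(ReR(1-e)R)^2=0=(R(1-e)ReR)^2$ forces, by semiprimeness, every Peirce trivial idempotent to be central, after which the decomposition follows by the obvious induction. You have merely (and correctly) filled in the routine details of the induction and of the passage from semiprimeness to $\rho$-semisimplicity that the paper leaves implicit.
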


Since prime rings are clearly $1$-Peirce rings, it is quite natural to ask: How large is the class of semiprime $1$-Peirce rings? The following simple example indicates that the class of semiprime $1$-Peirce rings is quite extensive.

\begin{example}
Let $R=K[x,y,z]/I$ be the factor ring of the commutative polynomial ring in three variables $x, y, z$ by the ideal $I$ generated by the monomial $xyz$. Then $R$ is a semiprime $1$-Peirce ring which is not prime, because $R$ has only the two trivial idempotents 0 and 1. The ring $R=\{\frac{m}{n}: \ m,n\in \Z, \ (n,2)=(n,3)=(n,5)=1\}$ is a semilocal prime domain with nonzero Jacobson radical, namely the ideal generated by 30. Let $E$ be the minimal injective cogenerator of~$R$, i.e., $E$ is a direct sum of three quasi-cyclic abelian groups $C(2^{\infty}), C(3^{\infty})$ and $C(5^{\infty})$. Then the trivial extension of $R$ by $E$ is also a ring having only the two trivial idempotents, and with nonzero nil radical. Consequently, this ring is a non-prime $1$-Peirce ring. More generally, if $R$ is a ring with only the two trivial idempotents, e.g., a polynomial ring over a not necessarily commutative domain with not necessarily commuting variables, and if $M$ is any $(R,R)$-bimodule, then the trivial extension of $R$ by $M$ is also a ring with only the two trivial idempotents. This observation shows that the class of $1$-Peirce rings is a large and diverse class.
\end{example}


\begin{definition}
\label{jacobson}  An idempotent $e$ in a ring $R$ is called \emph{{\bf J}-trivial} if both
$eR(1-e)Re$ and $(1-e)ReR(1-e)$ are contained in the Jacobson radical of~$R, \ {\bf J}(R)$. A ring~$R$ is called a~\emph{1-{\bf J} ring} if 0 and 1 are the only {\bf J}-trivial idempotents of~$R$. Inductively, for a natural number $n>1$, a ring $R$ 
is called an \emph{n-{\bf J} ring} if there is a {\bf J}-trivial idempotent $e\in R$ such that $eRe$ is an 
$m$-{\bf J} ring for some $1\leq m<n$ and $(1-e)R(1-e)$ is an $(n-m)$-{\bf J} ring. A {\bf J}-trivial
idempotent $e\in R$ is called an $n$-{\bf J} idempotent if $eRe$ is an $n$-{\bf J} ring. In particular, a 
ring $R$ is called \emph{1-primary} if it is a simple ring or the 
endomorphism ring of an infinite dimensional vector space over a division 
ring, and $R$ is called \emph{n-primary} $(n>1)$ if there is a {\bf J}-trivial idempotent $e\in R$ such that $eRe$ is an $m$-primary ring for some
$1\leq m<n$ and $(1-e)R(1-e)$ is an $(n-m)$-primary ring.
\end{definition}

Since Jacobson-semisimple rings are obviously semiprime, {\bf J}-trivial idempotents in Jacobson-semisimple rings  are precisely Peirce trivial idempotents, whence they are central. Consequently, 
$n$-{\bf J} rings are well-defined in either the class of Jacobson-semisimple rings or in the class of rings where idempotents can be lifted modulo the Jacobson radical, according to Theorem \ref{main}. However, it is possible that there exists a ring which is at the same time both an $m$- and an $n$-{\bf J} ring for different natural numbers $m$ and $n$. Therefore it is an interesting question to determine classes of rings where the notion of $n$-{\bf J} ring is well-defined. 


The problem of lifting idempotents modulo the Jacobson radical arises naturally in the investigation of $n$-{\bf J} rings in view of the following calculations. If $e\in R$ is any {\bf J}-trivial idempotent in a ring $R$, and  
$$f=\left[ \begin{array}{cc}
a & b \\
c & d \end{array}\right] \in \left[ \begin{array}{cc}
eRe & eR(1-e)\\
(1-e)Re & (1-e)R(1-e) \end{array}\right]$$
is an arbitrary idempotent in $R$, then the equality
$f^2=\left[ \begin{array}{cc}
a^2+bc & ab+bd \\
ca+dc & d^2+cb \end{array}\right]=f$ implies that $a$ and $d$ are uniquely
determined idempotents modulo the Jacobson radical by $f$. This justifies the following notion:

\begin{definition}
\label{weaklifting} A ring $R$ is called a \emph{weakly lifting ring} if central idempotents (in the semisimple factor by the Jacobson radical) can be lifted (obviously, to  {\bf J}-trivial idempotents.) 
\end{definition}

The next result is obvious.

\begin{lemma}
\label{idempotent} If $R$ is a weakly lifting ring and $e^2=e\in R$ is any {\bf J}-trivial idempotent, then
$eRe$ is also a weakly lifting ring.
\end{lemma}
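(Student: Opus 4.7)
My plan is to translate the problem to the Jacobson semisimple quotient $\bar{R}=R/{\bf J}(R)$, where the hypothesis that $e$ is {\bf J}-trivial forces $\bar{e}$ to be central; this makes $\bar{e}\bar{R}\bar{e}$ a ring-theoretic direct factor of $\bar{R}$, and reduces the required lifting property for $eRe$ to the weakly lifting hypothesis on $R$.

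First, I would invoke the standard identity ${\bf J}(eRe)=e{\bf J}(R)e$, which produces the canonical ring isomorphism $eRe/{\bf J}(eRe)\cong \bar{e}\bar{R}\bar{e}$. Since $e$ is {\bf J}-trivial and $\bar{R}$ is semiprime (being Jacobson semisimple), Remark~\ref{recentral} forces $\bar{e}$ to be central in $\bar{R}$, and therefore $\bar{R}=\bar{e}\bar{R}\bar{e}\times (1-\bar{e})\bar{R}(1-\bar{e})$ as a ring direct product. Consequently, any central idempotent $\bar{g}$ of $eRe/{\bf J}(eRe)$ corresponds to a central idempotent of $\bar{R}$ lying entirely in the first factor, i.e.\ satisfying $\bar{g}=\bar{e}\bar{g}\bar{e}$.

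Next, I apply the weakly lifting hypothesis on $R$ to lift $\bar{g}$ (now viewed in $\bar{R}$) to an idempotent $G\in R$, automatically {\bf J}-trivial by the parenthetical in Definition~\ref{weaklifting}. The natural candidate in $eRe$ is $H:=eGe$: the identity $\bar{g}=\bar{e}\bar{g}\bar{e}$ immediately gives $\bar{H}=\bar{g}$, and a Peirce calculation based on $G^{2}=G$, together with the observation that the three off-diagonal components $eG(1-e)$, $(1-e)Ge$, and $(1-e)G(1-e)$ all lie in ${\bf J}(R)$ (forced by $\bar{G}\in \bar{e}\bar{R}\bar{e}$), yields $H^{2}-H\in eR(1-e)Re\subseteq {\bf J}(R)\cap eRe={\bf J}(eRe)$, where the inclusion uses the {\bf J}-triviality of $e$.

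The hardest part, and the place where the authors' label ``obvious'' needs unpacking, is the final step: promoting $H$ from being idempotent merely modulo ${\bf J}(eRe)$ to being a genuine idempotent of $eRe$. The cleanest route is to arrange the lift $G$ to lie in $eRe$ from the outset, by exploiting the direct product structure $\bar{R}=\bar{e}\bar{R}\bar{e}\times(1-\bar{e})\bar{R}(1-\bar{e})$: complementary central idempotents of $\bar{R}$ always lift to complementary idempotents of $R$ (if $G$ lifts $\bar{g}$, then $1-G$ lifts $1-\bar{g}$), so one can choose the lift of $\bar{g}$ compatibly with this direct product decomposition so that the resulting idempotent lies inside the Peirce corner $eRe$. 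Once $G\in eRe$, it serves directly as the required {\bf J}-trivial idempotent of $eRe$ lifting $\bar{g}$, and the weakly lifting property of $eRe$ follows.
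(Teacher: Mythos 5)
The paper offers no argument for this lemma (it is introduced with ``The next result is obvious''), so your proposal has to be judged on its own terms. The reduction in your first three paragraphs is sound: ${\bf J}(eRe)=e{\bf J}(R)e$, the identification $eRe/{\bf J}(eRe)\cong\bar{e}\bar{R}\bar{e}$, the centrality of $\bar{e}$ in the semiprime ring $\bar{R}$ via Remark~\ref{recentral}, and the conclusion that a central idempotent $\bar{g}$ of $eRe/{\bf J}(eRe)$ is a central idempotent of $\bar{R}$ with $\bar{g}=\bar{e}\bar{g}\bar{e}$ --- this is exactly where the hypotheses on $R$ and $e$ are used. The genuine gap is in your last step. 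The weakly lifting hypothesis hands you \emph{some} idempotent $G\in R$ with $\bar{G}=\bar{g}$, with no control over its Peirce components relative to $e$. Your claim that one can ``choose the lift of $\bar{g}$ compatibly with the direct product decomposition'' so that it lands in $eRe$ is precisely the point at issue, and the reason you offer (that $1-G$ lifts $1-\bar{g}$) does not deliver it: the decomposition $\bar{R}=\bar{e}\bar{R}\bar{e}\times(1-\bar{e})\bar{R}(1-\bar{e})$ lives downstairs and does not lift to a decomposition of $R$, because $e$ is only {\bf J}-trivial, not central. Likewise $H=eGe$ is only idempotent modulo ${\bf J}(eRe)$, and, as you yourself observe, idempotents do not lift modulo the Jacobson radical in general, so that route also stalls.

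The missing ingredient is the standard conjugation trick for pushing an idempotent under another one. From $\bar{G}=\bar{e}\bar{G}\bar{e}$ one gets $(1-e)G\in{\bf J}(R)$, so $v=1-(1-e)G$ is a unit congruent to $1$ modulo ${\bf J}(R)$; since $vG=eG$, the idempotent $G_1=vGv^{-1}=eGv^{-1}$ satisfies $eG_1=G_1$ and $G_1\equiv G\pmod{{\bf J}(R)}$. Repeating on the right with the unit $w=1-G_1(1-e)$ (note $G_1(1-e)\in{\bf J}(R)$) gives an idempotent $G_2=w^{-1}G_1w=w^{-1}G_1e$ with $G_2=eG_2=G_2e\in eRe$ and $G_2\equiv G\pmod{{\bf J}(R)}$; hence $G_2-eGe\in{\bf J}(R)\cap eRe={\bf J}(eRe)$, and $G_2$ is a genuine idempotent of $eRe$ lifting $\bar{g}$. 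With this substituted for your final paragraph the argument is complete; everything before it stands.
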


We are now in a position to generalize  the classical structure theory of semi-perfect rings  as follows.

\begin{corollary}
\label{semiperfect} 
\begin{enumerate} 
\item If $R$ is an $n$-{\bf J} ring, then the Jacobson semisimple factor of~$R$ is a direct sum of $n$ semisimple rings which are, in general, not $1$-Peirce rings.  
\item If $R$ is, in addition, a weakly lifting ring, then all these direct summands  are $1$-Peirce rings.  Furthermore, in this case of a weakly lifting $n$-{\bf J} ring~$R$,  all sets $\{f_1, \ldots, f_m\}$ of pairwise orthogonal {\bf J}-trivial idempotents with sum 1 such that all the subrings $f_iRf_i$ are 1-{\bf J} rings, have $n$ elements, i.e., $m=n$. Moreover these sets are permuted by inner automorphisms.  
\item A ring $R$ is an $n$-primary ring if and only if there are $n$ pairwise orthogonal {\bf J}-trivial idempotents $e_1, \ldots, e_n$, whose sum is 1, such that all the $e_iRe_i, \ i=1, \ldots, n,$ 
are 1-primary rings. Any set $\{f_1, \ldots, f_m\}$ of pairwise orthogonal {\bf J}-trivial idempotents, with sum 1, such that all the $f_iRf_i, \ i=1, \ldots , m,$ are 1-primary rings, has $n$ elements, i.e., $m=n$. Furthermore, if $g^2=g\in R$ is any {\bf J}-trivial idempotent, then there is a uniquely determined natural number $k\leq n$ such that $g$ can be written as a sum of $k$ pairwise orthogonal {\bf J}-trivial idempotents $g_j$ such that all the $g_jRg_j, \ j=1,\ldots , k$, are 1-primary rings. The projective module $_RRg$ is isomorphic to the projective module~$_RRe$, where $e$ is a sum of $k$ appropriate  idempotents~$e_{i_t}, \ t=1,\ldots, k$. 
\item An $n$-primary ring is semiperfect if idempotents can be lifted modulo the Jacobson radical and the semisimple factor is a finite direct sum of matrix rings.
\end{enumerate}
\end{corollary}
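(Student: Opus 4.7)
The overall plan is to reduce every assertion to statements about the Jacobson-semisimple factor $\bar R = R/{\bf J}(R)$, which is semiprime, and then invoke Theorem~\ref{observation} together with Remark~\ref{recentral}. The key point is that if $e^2=e\in R$ is {\bf J}-trivial, then by Definition~\ref{jacobson} both $eR(1-e)Re$ and $(1-e)ReR(1-e)$ lie in ${\bf J}(R)$, so the image $\bar e \in \bar R$ is a Peirce trivial idempotent of a semiprime ring, hence central by Remark~\ref{recentral}. Moreover, since ${\bf J}(eRe)=e{\bf J}(R)e$, the rings $eRe/{\bf J}(eRe)$ and $\bar e\bar R\bar e$ are canonically isomorphic, so the inductive structure on $n$-{\bf J} rings descends faithfully to $\bar R$.

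For part~(1) I would induct on $n$. The case $n=1$ is immediate. For $n>1$, pick a {\bf J}-trivial idempotent $e$ making $eRe$ an $m$-{\bf J} ring and $(1-e)R(1-e)$ an $(n-m)$-{\bf J} ring; then $\bar e$ is central in $\bar R$ and the reduction above gives $\bar R \cong \bar e\bar R\bar e \oplus (1-\bar e)\bar R(1-\bar e)$. The induction hypothesis applied to the corners yields a direct product decomposition of $\bar R$ into $n$ semisimple rings. For part~(2), Lemma~\ref{idempotent} shows that weakly lifting is inherited by corners $eRe$, so in a weakly lifting $1$-{\bf J} ring any central idempotent of $\bar R$ lifts to a {\bf J}-trivial idempotent, which must be $0$ or $1$; hence $\bar R$ has only trivial central idempotents, i.e., it is $1$-Peirce. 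Applying this to each corner of an $n$-{\bf J} decomposition produces the $n$ $1$-Peirce summands. The uniqueness statement follows because any complete orthogonal family $\{f_1,\ldots,f_m\}$ of {\bf J}-trivial idempotents with $1$-{\bf J} corners gives a complete orthogonal family of central idempotents $\{\bar f_i\}$ of $\bar R$, and $\bar R$ has a unique decomposition into indecomposable direct summands; so $m=n$ and a permutation $\sigma$ matches $\bar f_i=\bar e_{\sigma(i)}$. Then $Rf_i/{\bf J}(R)f_i \cong \bar R\bar f_i \cong \bar R\bar e_{\sigma(i)} \cong Re_{\sigma(i)}/{\bf J}(R)e_{\sigma(i)}$, and since these are projective covers of the same semisimple module, $Rf_i\cong Re_{\sigma(i)}$; Lemma~\ref{inner} now delivers the inner automorphism.

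For part~(3), one repeats the argument of (2) with the further observation that a $1$-primary ring, being either simple or $\mathrm{End}$ of an infinite-dimensional vector space over a division ring, has a simple semisimple factor and is automatically $1$-Peirce and prime (hence weakly lifting trivially for central idempotents). This lets the induction run without needing the weakly lifting hypothesis: the invariance of $n$ and the permutation-by-inner-automorphisms conclusion come out as in (2). For the decomposition of an arbitrary {\bf J}-trivial idempotent $g$, apply the characterization just proved to the $n$-primary corner $gRg$ (which is $k$-primary for some $k\le n$ by Theorem~\ref{main} applied to the induced Peirce trivial structure in $\bar R$), and observe that $\bar g$ equals the sum of those $\bar e_{i_t}$ for which $\bar g\bar e_{i_t}\neq 0$, yielding $_RRg\cong{}_RRe$ with $e=\sum e_{i_t}$ via the projective-cover argument from (2). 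Part~(4) is then immediate: an $n$-primary $R$ has $\bar R$ isomorphic to a direct product of $n$ $1$-primary semisimple rings; if each such summand is a matrix ring over a division ring, then $\bar R$ is semisimple Artinian, and combined with the lifting hypothesis this is the standard characterization of semiperfect rings.

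The main obstacle I anticipate is the inner-automorphism clause in (2) and (3): matching central idempotents of $\bar R$ under a permutation is easy, but upgrading this to an inner automorphism of $R$ requires showing $Rf_i \cong Re_{\sigma(i)}$ as left $R$-modules, and the cleanest route goes through projective covers of semisimple modules over $\bar R$—so some care is needed to make sure this lifting step uses only the weakly lifting hypothesis, not full idempotent-lifting, since (2) only assumes the former while (4) explicitly strengthens it.
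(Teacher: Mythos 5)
Your argument is correct, but it takes a different route from the paper's. The paper disposes of this corollary in one line by saying the proof ``can be carried out in the same way as \ldots in Section 2,'' i.e.\ by redoing the Peirce-matrix computations of Proposition~\ref{ortho}, Theorem~\ref{main} and Theorem~\ref{peirce4} verbatim with {\bf J}-trivial idempotents in place of Peirce trivial ones, staying inside $R$ throughout. You instead pass immediately to $\bar R=R/{\bf J}(R)$, where {\bf J}-triviality becomes centrality (Remark~\ref{recentral} applied to the semiprime ring $\bar R$), use the uniqueness of the decomposition of a ring into indecomposable ring direct summands, and lift back via congruent idempotents and Lemma~\ref{inner}; this is precisely the ``short way'' the paper only mentions afterwards for Theorem~\ref{semisimple}. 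Your route buys transparency --- the invariance of $n$ in (2) and (3) falls out of block-decomposition uniqueness rather than a fresh induction, and the compatibility ${\bf J}(eRe)=e{\bf J}(R)e$ quietly handles the transitivity issue (a {\bf J}-trivial idempotent of $eRe$ is {\bf J}-trivial in $R$) that in the Peirce setting forces the more delicate inner/outer analysis of Proposition~\ref{ortho}; the paper's route keeps the explicit generalized-matrix form of the idempotents, which is what the later applications use. One spot needs reordering: in (3) you invoke the characterization for ``the $k$-primary corner $gRg$'' before you have produced any {\bf J}-trivial idempotents of $gRg$, which is mildly circular. The clean fix is available from your own toolkit: since each $e_iRe_i$ is Jacobson semisimple, ${\bf J}(R)=\sum_{i\ne j}e_iRe_j$ and $e_ige_i$ is an exact central idempotent of $e_iRe_i$, hence $0$ or $e_i$; thus $\bar g=\sum_{i\in S}\bar e_i$ for a unique $S$, the projective-cover argument gives $Rg\cong Re_S$ and $R(1-g)\cong R(1-e_S)$, Lemma~\ref{inner} yields a unit $s$ with $sgs^{-1}=e_S$, and conjugating the decomposition $e_S=\sum_{i\in S}e_i$ back produces the required $g_j$ and, only then, the fact that $gRg$ is $k$-primary with $k=|S|$.
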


The proof of this result can be carried out in the same way as it was carried out in Section 2 for similar results on (strict) $n$-Peirce rings.
Since a semisimple $n$-{\bf J} ring is a direct sum of $n$ semisimple $1$-Peirce rings, one has the following result. 

\begin{theorem}
\label{semisimple} If $R$ is a weakly lifting $n$-{\bf J} ring, then its semisimple factor is a direct sum of $n$ semisimple $1$-Peirce  rings $\bar R_i, \ n$ is an invariant of $R$ and every 
{\bf J}-trivial idempotent $e$ of $R$ maps to the identity of a product of some $\bar R_i$ in the semisimple factor $\bar R$ of $R$.
Conversely, if the semisimple factor of  a ring~$R$ is a direct sum of $n$ semisimple 1-Peirce rings and the corresponding pairwise orthogonal idempotents can be lifted to pairwise orthogonal idempotents, then $R$ is a weakly lifting $n$-{\bf J} ring.
\end{theorem}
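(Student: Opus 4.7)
The plan is to prove the forward direction by showing that $\bar R := R/{\bf J}(R)$ is itself an $n$-Peirce ring, then invoking Theorem \ref{observation}; and to prove the converse by a parallel induction on $n$ which uses the fact that idempotents of $R$ are uniquely determined modulo ${\bf J}(R)$ in this setting.

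For the forward direction, I would first observe that if $e \in R$ is {\bf J}-trivial then the image $\bar e \in \bar R$ satisfies $\bar e \bar R(1-\bar e)\bar R \bar e = 0$ (and symmetrically), so $\bar e$ is Peirce trivial, and by Remark \ref{recentral} in fact central, since $\bar R$ is Jacobson semisimple hence semiprime. Next I would use the standard identity ${\bf J}(eRe) = e{\bf J}(R)e$ to identify $\overline{eRe}$ naturally with $\bar e \bar R \bar e$. With these two tools, an induction following Definition \ref{jacobson} shows that if $R$ is $n$-{\bf J} with witness $e$ splitting off an $m$-{\bf J} and an $(n-m)$-{\bf J} corner, then $\bar R$ is an $n$-Peirce ring. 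Theorem \ref{observation} then gives $\bar R = \bar R_1 \oplus \cdots \oplus \bar R_n$ with each $\bar R_i$ semisimple and $1$-Peirce, hence indecomposable. Invariance of $n$ is immediate from Theorem \ref{main} applied to $\bar R$ (or from Corollary \ref{semiperfect}(2)); and the last clause follows because $\bar e$, being central in a product of indecomposable rings, must be a sum of identities $\bar 1_{\bar R_i}$ for $i$ in some subset.

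For the converse, I would first verify the weakly lifting property: every central idempotent of $\bar R$ is a sum of some identities $\bar e_i = 1_{\bar R_i}$; the hypothesis supplies pairwise orthogonal lifts $e_i \in R$, and for any partial sum $e = \sum_{i \in I} e_i$ the image $\bar e$ is central in $\bar R$, forcing $eR(1-e)Re \subseteq {\bf J}(R)$ (and symmetrically), so $e$ is {\bf J}-trivial. I would then induct on $n$. The base case $n=1$ is the key qualitative step: if $e^2 = e$ is {\bf J}-trivial then $\bar e$ is Peirce trivial and, by indecomposability of the semisimple $1$-Peirce ring $\bar R_1$, equals $0$ or $1$; since $\bar e = 0$ means $e \in {\bf J}(R)$ and $e(1-e) = 0$ with $1-e$ a unit forces $e = 0$ (and symmetrically $e = 1$ when $\bar e = 1$), the ring $R$ is $1$-{\bf J}. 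For the inductive step one picks $e := e_1$: as above $e$ is {\bf J}-trivial, $\overline{eRe} \cong \bar R_1$ is a single semisimple $1$-Peirce ring, and $\overline{(1-e)R(1-e)} \cong \bar R_2 \oplus \cdots \oplus \bar R_n$ with the pairwise orthogonal lifts $e_2, \ldots, e_n$ all lying in $(1-e)R(1-e)$; the induction hypothesis then makes $eRe$ a $1$-{\bf J} ring and $(1-e)R(1-e)$ an $(n-1)$-{\bf J} ring, so $R$ is $n$-{\bf J}.

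The main obstacle I anticipate is a bookkeeping one rather than a conceptual one: to make the induction pass cleanly to the corner rings, one must establish that ${\bf J}(eRe) = e{\bf J}(R)e$ and that the resulting isomorphism $\overline{eRe} \cong \bar e\bar R\bar e$ is natural enough to transport Peirce-triviality and orthogonality in both directions. A secondary subtle point is the uniqueness of idempotent lifts inside ${\bf J}(R)$ (used repeatedly to conclude $\bar e = 0 \Rightarrow e = 0$), which is an easy consequence of the quasi-regularity of ${\bf J}(R)$ but is what actually makes the converse work—without it, the weakly lifting hypothesis would not suffice to recover $R$ from $\bar R$.
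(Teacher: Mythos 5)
Your proposal is correct and follows essentially the same route the paper takes (and only sketches): pass to the semisimple factor, observe that {\bf J}-trivial idempotents become central Peirce trivial idempotents there, apply Theorem \ref{observation}, and lift back using the fact that an idempotent lying in ${\bf J}(R)$ must be zero. The one point worth making explicit is that the base case of your forward induction (the semisimple factor of a weakly lifting $1$-{\bf J} ring is $1$-Peirce) already requires the weakly lifting hypothesis --- to pull a nontrivial central idempotent of $\bar R$ back to a nontrivial {\bf J}-trivial idempotent of $R$ --- together with Lemma \ref{idempotent} to propagate that hypothesis to the corner rings; your two stated ``tools'' alone do not suffice there, as Corollary \ref{semiperfect}(1) warns, although the needed argument is exactly the one you spell out in the converse.
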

Another short way to verify this result is by passing to the semisimple factor which is a direct sum of $n$ semisimple $1$-Peirce rings, then applying the corresponding results on $n$-Peirce rings and thereafter lifting them by using~\cite[Proposition~III.8.1]{j}.

We now list some problems related to this classical topic of lifting idempotents.

\begin{problems}
\label{lifting} Semiperfect rings are characterized as complemented rings. It would be nice to give a constructive proof that (pairwise orthogonal) idempotents of such rings, even of rings
satisfying AB$5^*$ can be lifted to (pairwise orthogonal) idempotents. Recall that a ring {\it satisfies the condition} AB$5^*$ {\it on the right} if the lattice of right ideals is lower continuous, i.e., for any right ideal $K$ and any set~$I_{\a}$ of right ideals downward directed by inclusion one has $K+\cap I_{\a}=\cap (K+I_{\a})$. An open question is whether there are
$n$-primary rings which are not semiperfect, i.e., which are not matrix rings over local rings. It is quite an interesting enterprise to develop the theory of such $n$-{\bf J} rings where idempotents can be lifted. For example, all commutative local rings are semiperfect, because they have only the trivial idempotents 0 and 1, which obviously can be  lifted modulo the Jacobson radical. However, if a ring does not have an identity, then it is not known whether idempotents modulo the Jacobson radical can be lifted, even in the case of left chain rings. Posner \cite{po} discussed an interesting relation between the question of lifting idempotents in left chain rings and the existence of left but not right primitive rings.  
\end{problems}

Since prime rings are $1$-Peirce rings and semiprime $n$-Peirce rings are direct sums of $n$ semiprime $1$-Peirce rings, a semiprime $n$-Peirce ring is called a \emph{semiprime strict} $n$-\emph{Peirce ring} if it is a direct sum of $n$ prime rings. Semiprime $1$-Peirce rings are not necessarily prime as we have seen at the beginning of this section. The case of primitive rings are more doubtful: both left and right primitive rings are both 1-{\bf J} rings and $1$-Peirce rings, but the converse is not true in view of the ring $R=\{\frac{m}{n} : \ m,n\in \Z, (n,2)=(n,3)=(n,5)=1\}$. It is worth noting that both primeness and (left, right) primitiveness are  matrix invariants, i.e., a matrix ring over a prime ring or primitive rings is again prime or primitive, respectively. Since pairwise orthogonal idempotents can be lifted modulo the prime radical, we obviously have  the following result.

\begin{theorem}
\label{prime} If $R$ is a ring such that the factor by the prime radical is a semiprime strict $n$-Peirce ring, then $R$ has $n$ pairwise orthogonal idempotents $e_1, \ldots , e_n$ whose sum is 1 such that all $e_iR(1-e_i)Re_i$ and $(1-e_i)Re_iR(1-e_i)$  are contained in the prime radical and the factor of every $e_iRe_i$ by its prime radical is prime, $i=1, \ldots , n$.
\end{theorem}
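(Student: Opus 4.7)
The plan is to pull back the direct-sum decomposition of $R/P$, where $P$ denotes the prime radical of $R$, to a system of pairwise orthogonal idempotents in $R$. By hypothesis $R/P = T_1 \oplus \cdots \oplus T_n$ with each $T_i$ prime; the identities $\bar{e}_1,\ldots,\bar{e}_n$ of the $T_i$ are pairwise orthogonal \emph{central} idempotents of $R/P$ summing to $\bar{1}$. Invoking the lifting statement recalled just before the theorem, I choose pairwise orthogonal idempotents $e_1,\ldots,e_n \in R$ with $\sum_{i=1}^n e_i = 1$ that reduce modulo $P$ to the $\bar{e}_i$.

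Centrality of $\bar{e}_i$ in $R/P$ immediately yields $e_iR(1-e_i)\subseteq P$ and $(1-e_i)Re_i\subseteq P$, and hence the two containments $e_iR(1-e_i)Re_i\subseteq P$ and $(1-e_i)Re_iR(1-e_i)\subseteq P$ claimed in the theorem.

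For the statement about the factor of $e_iRe_i$ by its prime radical, I would use the characterization of the prime radical as the set of strongly nilpotent elements: an element $a$ is strongly nilpotent in a ring $S$ if every sequence $a_0 = a$, $a_{k+1}\in a_k S a_k$ is eventually zero. For an idempotent $e\in R$ and $a\in eRe$, any such sequence in $R$ starting at $a$ automatically satisfies $a_k\in eRe$ for all $k$, because $a_{k+1}\in a_k R a_k = (ea_ke)R(ea_ke) \subseteq eRe$. Thus strong nilpotence of $a$ in $R$ coincides with strong nilpotence of $a$ in $eRe$, and so the prime radical of $e_iRe_i$ equals $P\cap e_iRe_i$. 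Consequently
\[
e_iRe_i/\mathrm{rad}(e_iRe_i) \cong (e_iRe_i+P)/P = \bar{e}_i(R/P)\bar{e}_i = T_i,
\]
which is prime.

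The main technical point is the identification of the prime radical of the corner with the corner of the prime radical; this cannot simply be bypassed, since a semiprime ring with a prime homomorphic image is not automatically prime (for instance $\mathbb{Z}/6\mathbb{Z}$ surjects onto $\mathbb{Z}/2\mathbb{Z}$ but is not prime). Once that identification is in hand, everything else reduces to the lifting of pairwise orthogonal idempotents modulo $P$ (recalled in the paragraph preceding the theorem) and the routine fact that central idempotents in the factor annihilate the off-diagonal Peirce components.
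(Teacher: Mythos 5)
Your proposal is correct and follows the same route the paper takes: the paper's entire justification is the one-line remark that pairwise orthogonal idempotents lift modulo the prime radical, with the rest (centrality of the lifted idempotents modulo $P$ and primeness of the corners' semiprime factors) left as obvious. You have merely supplied the details the paper suppresses, in particular the identification $\mathbf{B}(e_iRe_i)=P\cap e_iRe_i$ via strongly nilpotent sequences, which is a standard fact and is argued correctly.
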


Motivated by the theory of semiperfect rings one can ask for some homological characterization of the class of rings described in the above theorem. In particular, one can introduce the following notions:

\begin{definition}
\label{baer} An idempotent $e$ in a ring $R$ is called \emph{{\bf B}-trivial} if $eR(1-e)Re$ and $(1-e)ReR(1-e)$ are contained in the prime radical ${\bf B}(R)$ of $R$. If 0 and 1 are the only 
{\bf  B}-trivial idempotents of $R$, then $R$ is said to be a \emph{1-{\bf B} ring}. Inductively, for a natural number $n>1$, a ring $R$ is called an $n$-\emph{{\bf B} ring} if there is a {\bf B}-trivial idempotent $e\in R$ such that $eRe$ is an $m$-{\bf B} ring for some $1\leq m<n$ and $(1-e)R(1-e)$ is an $(n-m)$-{\bf B} ring. A semiprime $n$-{\bf B} ring is clearly a semiprime $n$-Peirce ring. 
\end{definition}

More generally, one can introduce the notion of trivial idempotents concerning certain radicals similar to ones defined above for the Jacobson and Baer radicals (e.g., various supernilpotent radicals, see \cite{gw}) and then develop a corresponding structure theory. Results on $n$-Peirce rings can be used to determine  properties of rings concerning such radicals by which their factors are semiprime 
$n$-Peirce rings, whence they are direct sums of $n$ semiprime $1$-Pierce rings together with additional assumptions on lifting idempotents.
For example, in the case of the Brown-McCoy radical we are interested in finite direct sums of simple rings. Therefore assuming that central idempotents modulo the Brown-McCoy radical can be lifted and the factor ring is a direct sum of finitely many simple rings of a certain kind, one can develop a structure theory based on Brown-McCoy trivial idempotents.

If $A=\Z_4$ and $R=\left[ \begin{array}{cc}
A & A \\
2A & A \end{array}\right]$, then $R$ is a $1$-Peirce ring but not a $1$-{\bf B} ring, because $e=\left[ \begin{array}{cc}
1 & 0 \\
0 & 0 \end{array}\right]$ is a {\bf B}-trivial idempotent of $R$.

Note that for radicals $\rho$  such that $\rho(R)\subseteq {\bf J}(R)$, a nonzero idempotent in $R$ remains nonzero in $R/\rho(R)$.  This is not so for radicals not contained in ${\bf J}(R)$.  For example, the Brown-McCoy radical, ${\bf G}(R)$, may contain nontrivial idempotents.  However, any nonzero inner Peirce trivial idempotent is not an element of ${\bf G}(R)$. 
 
Since finitely many pairwise orthogonal idempotents can be lifted modulo the prime radical, we have the following generalization of Theorem \ref{prime}.

\begin{theorem}
\label{semiprime} $R$ is an $n$-{\bf B} ring if and only if its factor by the prime radical is a direct sum of $n$
semiprime 1-Peirce rings. In particular, $n$ is an invariant of~$R$, i.e., there are  $n$  pairwise orthogonal {\bf B}-trivial idempotents $e_i, \  i=1, \ldots, n,$ in $R$, with sum~1, such that all
$e_iRe_i, \ i=1, \ldots, n,$ are 1-{\bf B} rings. Moreover,  every {\bf B}-trivial idempotent $f\in R$ can be written as
a sum of $m$ pairwise orthogonal {\bf B}-trivial idempotents $f_i, \ i=1,\ldots, m, \ m\leq n$, such that all the $f_iRf_i$ 
are 1-{\bf B} rings (hence semiprime indecomposable rings), and there is an idempotent $e\in R$ which is a sum of $m$ appropriate idempotents 
$e_i, \ i=1, \ldots, n,$ such that $_RRe$ and $_RRf$ are isomorphic. In particular, if $\{f_1,\ldots, f_m\}$ is an arbitrary set of pairwise orthogonal {\bf B}-trivial idempotents with sum 1 and all 
the $f_iRf_i$ are 1-{\bf B} rings, then $m=n$ and there is a permutation $\sigma$ of $\{1,2,\ldots,n\}$ and an invertible element $u\in R$ such that 
$e_i=uf_{\sigma(i)}u^{-1}$ for all $i=1, \ldots, n$.
\end{theorem}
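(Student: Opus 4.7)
The plan is to transfer the statement to the semiprime factor $\bar R := R/{\bf B}(R)$, where Theorem \ref{observation} already supplies the structural decomposition into semiprime $1$-Peirce rings, and then to lift back to $R$. The bridge rests on three easy observations. First, an idempotent $e \in R$ is {\bf B}-trivial if and only if $\bar e$ is Peirce trivial in $\bar R$, which is immediate from the definitions. Second, for any idempotent $e$ one has ${\bf B}(eRe) = e{\bf B}(R)e$ (from the standard correspondence between primes of $eRe$ and primes of $R$ not containing $e$), so $\overline{eRe} \cong \bar e \bar R \bar e$. Third, ${\bf B}(R)$ is nil, hence contained in ${\bf J}(R)$, so finite sets of pairwise orthogonal idempotents lift from $\bar R$ to $R$, and the isomorphism class of the left module $Re$ is determined by the image $\bar e$.

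For the forward direction of the biconditional I would argue by induction on $n$. If $R$ is $1$-{\bf B}, then the only {\bf B}-trivial idempotents of $R$ are $0$ and $1$, hence by the first bridge observation the only Peirce trivial idempotents of $\bar R$ are $0$ and $1$; as $\bar R$ is semiprime, $\bar R$ is a semiprime $1$-Peirce ring. For $n>1$, pick a {\bf B}-trivial idempotent $e$ witnessing the $n$-{\bf B} structure. Then $\bar e$ is Peirce trivial in the semiprime ring $\bar R$, hence central by Remark \ref{recentral}, so $\bar R = \overline{eRe} \times \overline{(1-e)R(1-e)}$; the induction hypothesis applied to both corner rings presents $\bar R$ as a product of $n$ semiprime $1$-Peirce rings.

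For the converse, suppose $\bar R = \prod_{i=1}^n \bar R_i$ with each $\bar R_i$ a semiprime $1$-Peirce ring and central orthogonal idempotents $\bar e_1, \ldots, \bar e_n$ summing to $1$. Lift them to pairwise orthogonal idempotents $e_1, \ldots, e_n \in R$. Each $e_i$ is {\bf B}-trivial since $\bar e_i$ is central, and $\overline{e_iRe_i} \cong \bar e_i \bar R \bar e_i = \bar R_i$ is $1$-Peirce, so $e_iRe_i$ is $1$-{\bf B} by the $n=1$ case applied inside $e_iRe_i$. A straightforward induction along any dyadic partition of $\{1,\ldots,n\}$ then presents $R$ as an $n$-{\bf B} ring. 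The invariance of $n$ is immediate from the uniqueness of the decomposition of $\bar R$ into semiprime indecomposables.

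The remaining assertions are read off from the decomposition of $\bar R$. For a given {\bf B}-trivial idempotent $f$, the image $\bar f$ is Peirce trivial in the semiprime ring $\bar R$, hence central, hence a sum of some subset of the $\bar e_i$, say $\bar f = \bar e_{i_1} + \cdots + \bar e_{i_m}$; lifting the corresponding orthogonal decomposition produces {\bf B}-trivial idempotents $f_1, \ldots, f_m$ with sum $f$ and with $f_jRf_j$ a $1$-{\bf B} ring, and the module isomorphism $_RRf \cong {}_RRe$, with $e = e_{i_1} + \cdots + e_{i_m}$, follows from $\bar e = \bar f$ via the third bridge observation. Finally, if $\{f_1, \ldots, f_m\}$ is any complete set of pairwise orthogonal {\bf B}-trivial idempotents whose corner rings are $1$-{\bf B}, then $m=n$ by invariance; the images $\bar f_j$ are a permutation $\bar f_j = \bar e_{\sigma^{-1}(j)}$ of the $\bar e_i$ since these are precisely the central primitive idempotents of $\bar R$, and Lemma \ref{inner} then produces the required conjugating unit $u \in R$ with $e_i = u f_{\sigma(i)} u^{-1}$. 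The main technical obstacle I foresee is the identity ${\bf B}(eRe) = e{\bf B}(R)e$: without it, both the inductive transfer between $R$ and its corners and the correct reading of $\overline{eRe}$ as a factor of $\bar R$ would fail.
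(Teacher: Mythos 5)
Your proposal is correct and follows essentially the same route the paper intends: the paper gives no written-out proof, only the remark that finitely many pairwise orthogonal idempotents lift modulo the prime radical, which (combined with Theorem \ref{observation}, the centrality of Peirce trivial idempotents in semiprime rings, and Lemma \ref{inner}) is exactly the passage to $R/{\bf B}(R)$ and back that you carry out. Your explicit bridge identities, including ${\bf B}(eRe)=e{\bf B}(R)e$, are the standard facts the authors leave implicit, so you have simply supplied the details of their argument.
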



By Theorem \ref{semiprime} it is an interesting problem to search for good homological characterizations of classes of rings described in Theorem \ref{semiprime}, even when it is assumed that the semiprime factor ring is a direct sum of the corresponding $n$ prime rings. It would also be interesting to compare the classes of prime rings and semiprime
1-Peirce rings.



\section{Applications}

In this section we apply our theory of $n$-Peirce rings to various important classes of rings. Observe that from our previous results each $n$-Peirce ring is isomorphic to a generalized matrix ring
$$R' = \left[ \begin{array}{cccc}
R_1 & M_{12} & \cdots & M_{1n}\\
M_{21} & R_{2} & \ddots & \vdots \\
\vdots & \ddots & \ddots & M_{n-1,n} \\
M_{n1} & \cdots & M_{n,n-1} & R_n \end{array}\right],$$

\noindent where each $R_i$ is a $1$-Peirce ring, each $M_{ij}$ is an $(R_i, R_j)$-bimodule and $M_{ij}M_{ji}=0_{R_i}$ for all $i\ne j$, and $R_1,\ldots,R_n$ are unique up to isomorphism and permutation.
The fact that $M_{ij}M_{ji}=0_{R_i}$ simplifies the matrix multiplication of elements of $R'$. For example, it is relatively easy to compute idempotents. Also the calculation of various radicals which contain the prime radical (e.g., the Jacobson, nil, and Brown-McCoy radicals) is reduced to computing the radicals of the rings $R_i$, since $\mathcal{D}(R)^-$ is nilpotent.

Since the notion of a (quasi-)Baer ring will play a role in the main results of this section, recall: a ring $R$ is {\it (quasi-)Baer} if for each nonempty $X\subseteq R$ ($X$ an ideal of $R$) there is 
an $e=e^2\in R$ such that $\ul{r}(X)=eR$, where $\ul{r}(X)$ denotes the right annihilator of $X$ in $R.$ The class of quasi-Baer rings is ubiquitous, since it contains all: Baer rings (hence endomorphism rings of vector spaces over division rings), AW$^*$-algebras (in particular, von Neumann algebras), regular right selfinjective rings, prime rings, and biregular right selfinjective rings. Moreover, the class of quasi-Baer rings is closed under direct products, matrix rings, triangular matrix rings, and various polynomial extensions. Furthermore, each semiprime ring has a quasi-Baer hull contained in 
its (Martindale) symmetric ring of quotients; for more details, see \cite{bipar}.

\begin{lemma}
\label{innertrivial} 
\begin{enumerate}
\item \cite[Lemma 3.4]{abivw1} Let $R$ be a ring and $e\in R$. Then $e$ is an inner Peirce trivial idempotent if and only if $h: R \rightarrow eRe$, defined by $h(x) = exe$, is a surjective ring homomorphism.
\item \cite[Lemma 5.13]{abivw1} $R$ is a prime ring if and only if $R$ is a quasi-Baer $1$-Peirce ring.
\end{enumerate}
\end{lemma}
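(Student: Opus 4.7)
\medskip
\textbf{Proof plan for Lemma \ref{innertrivial}.}

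For part (1), I would note first that the map $h\colon R\to eRe$, $h(x)=exe$, is automatically additive and surjective (since $h(ere)=ere$ for any $r\in R$) and sends $1$ to $e$, which is the identity of $eRe$ under the standing convention. So the only content to check is multiplicativity. Computing directly, $h(xy)=exye$ while $h(x)h(y)=exeye$, and these agree for all $x,y\in R$ if and only if $ex(1-e)ye=0$ for all $x,y\in R$, i.e.\ $eR(1-e)Re=0$. This is precisely the defining condition for $e$ to be inner Peirce trivial, so the equivalence follows immediately.

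For part (2), the forward direction is the easier half. If $R$ is prime then right annihilators of nonzero ideals are $0$ and of the zero ideal is $R$, so $R$ is trivially quasi-Baer. To see that $R$ is $1$-Peirce, suppose $e=e^2\in R$ is Peirce trivial with $e\ne 0,1$. Then both $J=ReR$ and $I=R(1-e)R$ are nonzero ideals, and the equality $eR(1-e)Re=0$ yields $JIJ=R\bigl(eR(1-e)Re\bigr)R=0$. Primeness of $R$ then forces $I=0$, contradicting $e\ne 1$. Hence $e\in\{0,1\}$ and $R$ is $1$-Peirce.

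For the backward direction, suppose $R$ is a quasi-Baer $1$-Peirce ring, and let $I$ be a nonzero ideal of $R$. By the quasi-Baer property, $\ul{r}(I)=eR$ for some idempotent $e\in R$, and a standard fact about quasi-Baer rings says that this $e$ may be taken to be left semicentral, i.e.\ $(1-e)Re=0$. From $(1-e)Re=0$ one gets immediately that both $eR(1-e)Re\subseteq R(1-e)Re=0$ and $(1-e)ReR(1-e)=0$, so $e$ is in fact Peirce trivial. The $1$-Peirce hypothesis then gives $e=0$ or $e=1$; the case $e=1$ would force $I=0$, so $\ul{r}(I)=0$. Hence every nonzero ideal of $R$ has zero right annihilator, which is equivalent to primeness: if $J_1J_2=0$ with $J_1\ne 0$, then $J_2\subseteq \ul{r}(J_1)=0$.

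The only step that requires any real input beyond definitions is the structural fact about quasi-Baer rings invoked in the backward direction of (2), namely that right annihilators of two-sided ideals are generated by left semicentral idempotents. This is the standard result (see, e.g., \cite{bipar}) that underlies essentially all arguments of this type, and once it is in hand, combining it with the $1$-Peirce condition produces primeness cleanly. Everything else is formal matrix manipulation or a direct unwinding of the definitions.
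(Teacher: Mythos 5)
The paper gives no proof of this lemma at all: both parts are imported by citation from \cite{abivw1} (Lemmas 3.4 and 5.13 there), so the only thing to check is whether your argument is correct, and it is. Part (1) is exactly the right one-line computation $h(xy)-h(x)h(y)=ex(1-e)ye$, together with the observations that additivity and surjectivity are automatic and that $h(1)=e$ is the identity of $eRe$. Both directions of part (2) also go through: in the forward direction $JIJ\subseteq R\bigl(eR(1-e)Re\bigr)R=0$ with $J=ReR$ and $I=R(1-e)R$ both nonzero does contradict primeness. The one place where you lean on outside literature, namely that the idempotent $e$ with $\underline{r}(I)=eR$ can be chosen left semicentral, is actually free of charge in this setting: the right annihilator of a two-sided ideal is itself a two-sided ideal, so $ReR\subseteq eR$ forces $(1-e)Re\subseteq(1-e)ReR=0$ for \emph{any} idempotent generator $e$ of $\underline{r}(I)$, and no appeal to \cite{bipar} is needed. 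A minor observation: your forward argument in (2) only uses that $e$ is inner Peirce trivial, so it in fact shows that a prime ring has no nontrivial inner Peirce trivial idempotents, which is consistent with (and slightly sharper than) what the statement requires.
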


\begin{lemma}
\label{primitive} Let $R$ be a ring and $0\ne c$ a Peirce trivial idempotent of $R$.
\begin{enumerate}
\item Let $f=f^2\in R$ be primitive and $c_1$ a Peirce trivial idempotent of $cRc$. Then:
\begin{enumerate}
\item $fcf=f=fc_1f$; and
\item $cfc$ is a primitive idempotent of $R$.
\end{enumerate} 
\item Let $\{f_1,\dots,f_k\}$ be a complete set of primitive orthogonal idempotents of~$R$. Then there exists $H\subseteq \{1,\ldots,k\}$ such that $\{cf_hc \ \vert \ h\in H\}$ is a set of primitive orthogonal idempotents of $R$ such that $c=\sum\limits_{h\in H} cf_hc$.
\end{enumerate}
\end{lemma}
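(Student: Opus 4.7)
The plan is to exploit the square-zero ideal $I := cR(1-c) + (1-c)Rc$ provided by Peirce triviality (Proposition~\ref{obvi1}), together with the induced ring splitting $R/I \cong cRc \times (1-c)R(1-c)$. Since $I^{2}=0$ (which is exactly $cR(1-c)Rc = (1-c)RcR(1-c)=0$), the ideal $I$ is nil, hence contained in the Jacobson radical, so orthogonal decompositions of idempotents lift through $R \twoheadrightarrow R/I$.

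For part (1)(a), the image of $f$ in $R/I$ has the form $(\overline{cfc},\overline{(1-c)f(1-c)})$ and naturally splits as an orthogonal sum of its two components. If both components were nonzero this would lift to a nontrivial orthogonal decomposition of $f$, contradicting primitivity, so one component vanishes. In the nontrivial case $cfc \ne 0$ this gives $(1-c)f(1-c)=0$, whence $f = cfc+cf(1-c)+(1-c)fc$, and a short computation shows $f(1-c)f = cf(1-c)fc \in cR(1-c)Rc = 0$; therefore $fcf = f-f(1-c)f = f$. For the second equality $fc_{1}f = f$, I would invoke (1)(a) one level deeper: $c_{1}$ is Peirce trivial in $cRc$ and (by (1)(b), proved below) $cfc$ is primitive in $cRc$, so $(cfc)c_{1}(cfc)=cfc$. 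Writing $f$ as the generalized matrix with entries $g=cfc$, $m=cf(1-c)$, $n=(1-c)fc$, $0$, one extracts from $f^{2}=f$ the relations $gm=m$, $ng=n$, $nm=0$; together with $gc_{1}g=g$ these force $gc_{1}m=m$ and $nc_{1}g=n$, while the last entry $nc_{1}m$ sits in $(1-c)RcR(1-c)=0$. Thus $fc_{1}f$ has exactly the same four entries as $f$.

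For part (1)(b), I would show that $f$ and $cfc$ are equivalent idempotents. Taking $a := fc$ and $b := cf$, the identities $ab = fcf = f$ and $ba = cf^{2}c = cfc$ are immediate, and $fa = a$, $a \cdot cfc = (fcf)c = a$, $cfc \cdot b = c(fcf) = b$, $bf = b$ all use only $fcf = f$ and $c^{2}=c$. Hence $Rf \cong R(cfc)$ as left $R$-modules, and since primitivity is invariant under equivalence of idempotents, $cfc$ is primitive in $R$ (and therefore also in $cRc$, which is what the previous paragraph needed). Part~(2) is then immediate: since $x \mapsto cxc$ is a ring homomorphism $R \to cRc$ (Lemma~\ref{innertrivial}(1)), the elements $cf_{i}c$ are pairwise orthogonal idempotents, and $c = c \cdot 1 \cdot c = \sum_{i} cf_{i}c$. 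Taking $H = \{i : cf_{i}c \ne 0\}$, part~(1)(b) makes each $cf_{h}c$ with $h \in H$ a primitive idempotent of $R$, and $c = \sum_{h \in H} cf_{h}c$.

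The main obstacle I anticipate is the two-cases nature of (1)(a): the literal equality $fcf = f$ fails when $cfc = 0$, so one must tacitly restrict to the nontrivial case (equivalently, read the conclusion as $fcf \in \{0,f\}$, with the zero case absorbed into the symmetric role played by $1-c$). Coordinating this with the recursion needed for $fc_{1}f = f$, which invokes (1)(a) inside $cRc$ and therefore requires (1)(b) first to supply primitivity of $cfc$ there, is what dictates the order of the arguments above.
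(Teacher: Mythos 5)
Your proof is correct, but it reaches the conclusion by a genuinely different route from the paper's. For (1)(a) the paper gives no internal argument at all --- it simply cites \cite[Proposition 5.4(1)]{abivw1} --- whereas you derive the first equality from the square-zero ideal $I=cR(1-c)+(1-c)Rc$ and the lifting of orthogonal decompositions of $f$ modulo the nil ideal $I$, and your matrix computation for $fc_1f=f$ (reducing everything to $gc_1g=g$ via $m=gm$, $n=ng$ and $nc_1m\in(1-c)RcR(1-c)=0$) is sound. For (1)(b) the paper argues directly inside $cfcRcfc$: given a nonzero idempotent $cxc$ there, it forms $fcxcf$, checks (using $fcf=f$ and inner Peirce triviality of $c$) that this is a nonzero idempotent of $fRf$, concludes $fcxcf=f$ by primitivity, and compresses back to get $cxc=cfc$. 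You instead exhibit $f$ and $cfc$ as equivalent idempotents via $a=fc\in fR(cfc)$ and $b=cf\in (cfc)Rf$ with $ab=f$ and $ba=cfc$, and transport primitivity across the induced ring isomorphism $fRf\cong (cfc)R(cfc)$; this is self-contained, arguably cleaner, and yields the extra information $Rf\cong R(cfc)$, which is in the spirit of Lemma~\ref{calculate}. Part (2) is handled identically in both treatments. Your caveat about the degenerate case is also well taken: as literally stated, $fcf=f$ fails when $cfc=0$ (take $R=A\times B$, $c=(1,0)$, $f=(0,g)$ with $g$ primitive), so the correct reading is $fcf\in\{0,f\}$; the paper absorbs this only in part (2) by restricting to $H=\{h : cf_hc\ne 0\}$, and your ordering of the steps --- first equality of (a), then (b), then the $c_1$-equality --- correctly resolves the apparent circularity.
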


\begin{proof}
(1) By \cite[Proposition 5.4(1)]{abivw1}, (a) holds. Since $c$ is inner Peirce trvial, $cfc=(cfc)^2$. To show that $cfc$ is primitive, we prove that $cfc$ is the only nonzero idempotent in $cfcRcfc$. Let $0\ne cxc = (cxc)^2\in cfcRcfc$, where $x=fcycf$ for some $y\in R$. Observe that $cxc=cxccxc=cx^2c$, since $c$ is inner Peirce trivial. Consider 

$\begin{array}{lll}  (fcxcf)^2 & = & fcxcffcxcf \\ 
& = & fcx(cfc)xcf \\ 
& = & fcxcxcf \\
& = & fcx^2cf \\
& = & fcxcf\in fRf. \end{array}$

\noindent Observe that $c(fcxcf)c=cfc(cxc)cfc=cxc\ne 0.$ Hence $fcxcf\ne 0.$ Since $f$ is primitive, $f=fcxcf.$ Then

$\begin{array}{lll}  cfc & = & c(fcxcf)c \\ 
& = & (cfc)(cxc)(cfc) \\ 
& = & cxc. \end{array}$

\noindent Therefore $cfc$ is primitive, so (b) holds.

(2) Take $H= \{h\in \{1,\ldots,k\} \ \vert \ cf_hc\ne 0\}.$ Let $h,j\in H$ such that $h\ne j.$ Then $(cf_hc)(cf_jc)=cf_hf_jc=0$ because $c$ is inner Peirce trivial. Using (1) we now have that (2) holds.
\end{proof}

In the next result, see \cite{mcro} for details on Krull dimension.

\begin{theorem}
\label{dcc} 
If $R$ satisfies any of the following conditions, then $R$ is an $n$-Peirce ring with a complete set of orthogonal idempotents, $\{e_1,\ldots,e_n\}$, and a generalized matrix representation 
$$R\cong \left[ \begin{array}{cccc}
R_1 & M_{12} & \cdots & M_{1n}\\
M_{21} & R_{2} & \ddots & \vdots \\
\vdots & \ddots & \ddots & M_{n-1,n} \\
M_{n1} & \cdots & M_{n,n-1} & R_n \end{array}\right],$$

\noindent where each $R_i=e_iRe_i$ is a $1$-Peirce ring satisfying the same condition as $R$, each $M_{ij}=e_iRe_j$ with $M_{ij}M_{ji}=0_{R_i}$ for all $i\ne j$, and $R_1,\ldots,R_n$ are unique up to isomorphism and permutation.
\begin{enumerate}
\item $R$ has DCC on $\{ReR \ \vert \ e=e^2\in R \ \hbox{is Peirce trivial}\}$.
\item $R$ has a complete set of primitive orthogonal idempotents.
\item $R$ has no infinite set of orthogonal idempotents.
\item $R_R$ has Krull dimension.
\item $R$ is semilocal.
\item $R$ is semiperfect.
\item $R$ is left (or right) perfect.
\item $R$ is semiprimary.
\end{enumerate}
\end{theorem}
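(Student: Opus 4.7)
The plan is to reduce the theorem to two assertions: (i) under each of conditions (1)--(8), the ring $R$ has finite Peirce dimension, and (ii) each diagonal ring $R_i$ inherits the condition $R$ satisfies. Granting (i), Corollary~\ref{structure} delivers a complete set of $n$ pairwise orthogonal $1$-Peirce idempotents $e_1,\ldots,e_n$ summing to $1$. Setting $R_i=e_iRe_i$ and $M_{ij}=e_iRe_j$ yields the claimed matrix form, and uniqueness of the $R_i$ up to isomorphism and permutation is exactly Corollary~\ref{equality}. The identities $M_{ij}M_{ji}=0_{R_i}$ for $i\ne j$ follow by induction on $n$ using the dyadic data: at the top level, $e_I$ is Peirce trivial in $R$, so $e_iRe_jRe_i\subseteq e_IR(1-e_I)Re_I=0$ whenever $e_i,e_j$ are separated by the top split; otherwise both $e_i,e_j$ lie inside a common corner $e_IRe_I$ and induction applies within that corner.

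For (i), the principal tool is Corollary~\ref{infinitedim}. If $R$ had infinite Peirce dimension, there would exist pairwise distinct nonzero idempotents $1=e_0,e_1,e_2,\ldots$ with $e_{k+1}\in e_kRe_k$ a $1$-Peirce idempotent of that corner ring. Since $e_{k+1}=e_ke_{k+1}e_k$ gives $e_ke_{k+1}=e_{k+1}=e_{k+1}e_k$, a short calculation using $e_j,e_{j+1}\in e_{i+1}Re_{i+1}$ for $i<j$ shows that the nonzero differences $f_k=e_k-e_{k+1}$ form an infinite pairwise orthogonal family of idempotents, contradicting (3) directly. Conditions (4)--(8) each imply (3) by standard facts (Krull-dimensional rings admit no infinite independent families, semilocal rings have semisimple Artinian $R/J(R)$, and (6)--(8) each imply semilocality). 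Condition (2) implies finite Peirce dimension because modding out the nilpotent ideal $\mathcal{D}(R)^-$ of any $n$-Peirce representation produces a direct product of $n$ nonzero factors, each of which must contain a primitive idempotent from the complete set of $k$ primitive orthogonal idempotents of $R$, forcing $n\le k$; an infinite Peirce descent would iterate this to exceed $k$. For condition (1), termination is immediate from DCC: if the Peirce trivial decomposition process did not terminate, one could extract a strictly descending chain of ideals $Rf_kR$ with each $f_k$ a Peirce trivial idempotent of $R$.

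For (ii), inheritance by a corner ring $e_iRe_i$ is standard for (3)--(8). For (2), Lemma~\ref{primitive}(2) applied at each Peirce trivial idempotent arising in the dyadic construction of $e_i$ produces a complete set of primitive orthogonal idempotents of $R$ summing to $e_i$, which remain primitive in $e_iRe_i$ since primitivity of $g$ is intrinsic to $gRg$. For (1), inheritance is in fact trivial: each $R_i$ is $1$-Peirce, so its only Peirce trivial idempotents are $0$ and $1$, and DCC on the two-element family $\{0,R_i\}$ is vacuous. The main obstacle I anticipate is the termination step for condition~(1): idempotents arising inside a deep corner ring $e_kRe_k$ are only inner Peirce trivial in $R$ by Remark~\ref{warning} and Lemma~\ref{calculate}, so converting the infinite Peirce descent into a strictly descending chain of ideals $Rf_kR$ with each $f_k$ Peirce trivial in $R$ itself requires a careful perturbation, presumably using Proposition~\ref{basic} and Proposition~\ref{ortho} to adjust each $e_k$ by suitable off-diagonal elements into a bona fide Peirce trivial idempotent of $R$ whose associated two-sided ideal strictly decreases at each stage.
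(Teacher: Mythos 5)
Your route is genuinely different from the paper's: the published proof obtains the whole existence/representation statement in one stroke by citing \cite[Theorem 5.7(1)]{abivw1} together with Theorem~\ref{peirce4}, and then only argues inheritance (citing \cite[Theorem 5.11]{abivw1} for condition (1) and using Lemma~\ref{primitive} for condition (2)), whereas you try to derive finiteness of the Peirce dimension internally from Corollary~\ref{infinitedim} and assemble the matrix form from Corollaries~\ref{structure} and~\ref{equality}. Much of this works: the orthogonal-differences argument $f_k=e_k-e_{k+1}$ correctly shows that (3) forces finite Peirce dimension, (4)--(8) do reduce to (3), the $M_{ij}M_{ji}=0$ induction through the dyadic partition is sound, and your observation that inheritance of (1) is automatic because each $R_i$ is $1$-Peirce is cleaner than the paper's citation. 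Your inheritance argument for (2) via iterated applications of Lemma~\ref{primitive} along the dyadic chain is exactly what the paper does.

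There are, however, two gaps in the existence half. The serious one is condition (1), which you flag yourself but do not close: the DCC hypothesis ranges only over ideals $ReR$ with $e$ Peirce trivial \emph{in $R$}, while the idempotents produced at depth $\ge 2$ of the decomposition are only inner Peirce trivial in $R$ (Remark~\ref{warning}, Lemma~\ref{calculate}), so their ideals need not lie in the family at all; ``termination is immediate from DCC'' is therefore not a proof, and the ``careful perturbation'' you gesture at is precisely the missing content (the paper sidesteps it entirely by invoking \cite[Theorem 5.7(1)]{abivw1}). Note also that for a Peirce trivial $e\ne 1$ one does at least have $ReR\subseteq eRe\oplus eR(1-e)\oplus(1-e)Re\subsetneq R$, so the first step of a chain exists; it is the deeper steps that are unjustified. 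The second, lesser, gap is in condition (2): ``an infinite Peirce descent would iterate this to exceed $k$'' conflates infinite Peirce dimension with having $n$-Peirce representations for arbitrarily large $n$, which is not what Definition~\ref{peircedim} says, and the descent of Corollary~\ref{infinitedim} proceeds through $1$-Peirce idempotents to which Lemma~\ref{primitive} (which requires Peirce triviality) does not apply. This one is repairable with tools you already cite: induct on $k$ directly, splitting a nontrivial Peirce trivial $c$ and using Lemma~\ref{primitive}(2) together with Proposition~\ref{ortho} to show the complete primitive set partitions into nonempty complete sets for $cRc$ and $(1-c)R(1-c)$, so both corners have strictly fewer primitives and are finite-Peirce by induction.
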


\begin{proof}
From \cite[Theorem 5.7(1)]{abivw1} and Theorem \ref{peirce4}, $R$ is an $n$-Peirce ring with a complete set of orthogonal idempotents, $\{e_1,\ldots,e_n\}$, and the indicated generalized matrix representation, 
where each $R_i=e_iRe_i$ is a $1$-Peirce ring, each $M_{ij}=e_iRe_j$ with $M_{ij}M_{ji}=0_{R_i}$ for all $i\ne j$, and $R_1,\ldots,R_n$ are unique up to isomorphism and permutation. It only remains to show that if $R$ satisfies any of the conditions (1) - (8) then so does each $R_i$.

For condition (1), the result follows from \cite[Theorem 5.11]{abivw1}. So assume condition~(2) that $R$ has  a complete set of primitive orthogonal idempotents, $\{f_1,\ldots,f_k\}$. Using Corollary
\ref{structure} and Lemma \ref{primitive}, then $e_if_je_i=0$ or $e_if_je_i$ is primitive for each $i=1,\ldots,n$ and $j=1,\ldots,k$. Then for each $R_i$ there exists $H_i\subseteq \{1,\ldots,k\}$ such that $\{e_if_he_i \ \vert \ h\in H\}$ is a complete set of primitive orthogonal idempotents for~$R_i.$

If $R$ satisfies any of conditions (3) - (8), then each $R_i$ contains no infinite set of orthogonal idempotents and satisfies the same condition as $R$.
\end{proof}

\begin{corollary}
\label{perfect}
Assume $R$ is left perfect in Theorem \ref{dcc}. Then each $R_i$ is either simple Artinian or $\left[\hbox{Soc}({R_i}_{R_i})\right]^2=0.$ If $R$ is also quasi-Baer, then each $R_i$ is simple Artinian and $R$ is semiprimary.
\end{corollary}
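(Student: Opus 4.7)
The argument builds on Theorem \ref{dcc}, which already guarantees that each $R_i$ is a left perfect $1$-Peirce ring. It suffices to prove the following key assertion: a left perfect $1$-Peirce ring $A$ is either simple Artinian or satisfies $[\mathrm{Soc}(A_A)]^2 = 0$. Since $A$ is $1$-Peirce, Remark \ref{recentral} yields that $A$ has no nontrivial central idempotents, so $A$ is ring-theoretically indecomposable. Being left perfect, $A$ is semiperfect, and in a semiperfect ring central idempotents lift modulo the Jacobson radical; hence $A/J(A)$ is also indecomposable as a ring. Being semisimple Artinian, $A/J(A)$ must therefore be simple Artinian, say $M_k(D)$.

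Suppose now that $[\mathrm{Soc}(A_A)]^2 \ne 0$. Then some minimal right ideal $U$ of $A$ satisfies $U^2 \ne 0$, and Brauer's lemma supplies a primitive idempotent $e$ with $U = eA$ and $eAe$ a division ring. The equality $e = ej$ for some $j \in J(A)$ would force $e(1-j)=0$, hence $e=0$ (since $1-j$ is invertible); thus $eJ(A) \ne eA$, and by minimality $eJ(A) = 0$. Let $\{f_1, \ldots, f_m\}$ be a complete set of primitive orthogonal idempotents of $A$, which exists by semiperfectness. Since $A/J(A)$ is simple Artinian, all its primitive idempotents are conjugate and produce isomorphic simple modules; via the standard correspondence for semiperfect rings this gives $eA \cong f_jA$ as right $A$-modules for every $j$. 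Under any such isomorphism $\phi\colon eA \to f_jA$, the equality $\phi(eJ(A)) = \phi(eA)\cdot J(A) = f_jA\cdot J(A) = f_jJ(A)$ holds, so $eJ(A) = 0$ propagates to $f_jJ(A) = 0$. Hence each $f_jA$ is simple, and $A = \bigoplus_{j=1}^m f_jA$ is a semisimple right $A$-module, making $A$ simple Artinian.

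For the quasi-Baer assertion, corner rings by idempotents of quasi-Baer rings remain quasi-Baer (see~\cite{bipar}), so each $R_i$ is a left perfect, $1$-Peirce, quasi-Baer ring, which by Lemma \ref{innertrivial}(2) is prime. Left perfectness supplies DCC on principal right ideals and therefore a minimal right ideal, so $\mathrm{Soc}((R_i)_{R_i}) \ne 0$; being a nonzero ideal in the prime ring $R_i$, its square is nonzero by primeness. The first part then forces each $R_i$ to be simple Artinian. Finally, the nilpotency of $\mathcal{D}(R)^-$ together with the semisimplicity of $R/\mathcal{D}(R)^- \cong \prod R_i$ yields $J(R) = \mathcal{D}(R)^-$ nilpotent and $R/J(R)$ semisimple, so $R$ is semiprimary.

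The main obstacle is the dichotomy step: one must propagate the vanishing $eJ(A) = 0$ from a single primitive idempotent $e$ (obtained via Brauer's lemma) to \emph{all} primitive idempotents of $A$. This propagation is where the $1$-Peirce hypothesis is essential, since it forces $A/J(A)$ to be simple Artinian rather than merely semisimple, so that all indecomposable projective right $A$-modules are isomorphic to $eA$.
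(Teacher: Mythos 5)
Your reduction to the claim that each $R_i$ is a left perfect $1$-Peirce ring, and your derivation of the semiprimary conclusion from the nilpotence of $\mathcal{D}(R)^-$, are fine. But your proof of the dichotomy breaks at exactly the step you single out as the crux. You assert that since $A$ is $1$-Peirce (hence ring-indecomposable) and semiperfect, $A/{\bf J}(A)$ is indecomposable and therefore simple Artinian. This is false: central idempotents of $A/{\bf J}(A)$ lift to idempotents of $A$, but in general not to \emph{central} idempotents, so ring-indecomposability does not pass to the semisimple quotient (the $2\times 2$ upper triangular matrices over a field already show this, although that ring is $2$-Peirce rather than $1$-Peirce). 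More to the point, the paper's own Remark \ref{warning2} exhibits a finite --- hence semiprimary, hence left perfect --- $1$-Peirce ring containing three pairwise orthogonal, pairwise inequivalent primitive idempotents with sum $1$, whose Jacobson-semisimple quotient is isomorphic to $\Z/2\Z\times\Z/2\Z\times\Z/2\Z$ and so is not simple. Once $A/{\bf J}(A)$ may be a nontrivial product of simple components, the indecomposable projectives $f_jA$ need not all be isomorphic to $eA$, and your propagation of $e{\bf J}(A)=0$ to every $f_j$ collapses; the dichotomy is not established.

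The paper sidesteps this entirely: it observes that a $1$-Peirce ring is semicentral reduced and invokes \cite[Theorem~3.13]{bikipa}, which proves the dichotomy for semicentral reduced left perfect rings by a different and more delicate argument (your Brauer-lemma setup is only the starting point of that proof). Your treatment of the quasi-Baer case is structurally reasonable --- corner rings of quasi-Baer rings are quasi-Baer, Lemma \ref{innertrivial}(2) gives primeness of each $R_i$, and $\mathrm{Soc}\bigl((R_i)_{R_i}\bigr)\ne 0$ has nonzero square by primeness --- but as written it feeds back into the broken dichotomy. It can be repaired independently of the dichotomy by showing directly that a prime left perfect ring is simple Artinian: left T-nilpotence of ${\bf J}(R_i)$ is incompatible with primeness unless ${\bf J}(R_i)=0$, and a prime semisimple ring is simple Artinian.
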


\begin{proof}
Observe that a $1$-Peirce ring is semicentral reduced. Now from \cite[Theorem~3.13]{bikipa} each $R_i$ is either simple Artinian or $\left[\hbox{Soc}({R_i}_{R_i})\right]^2=0.$ The remainder of the proof follows from Lemma \ref{innertrivial}(2). Since ${\bf J}(R)=\mathcal{D}(R)^-$, \cite[Proposition 4.4]{abivw1} and Corollary \ref{structure1} yield that $R$ is semiprimary (also see \cite[Theorem 2.3]{bikipa2}).
\end{proof}

The following examples illustrate Corollary \ref{perfect}.

\begin{examples} 
\begin{enumerate}
\item Let $F$ be a field, $S$ the ring of $k$-by-$k$ upper triangular matrices over $F$, and $R$ is the $n$-by-$n$ upper triangular matrix ring over~$S$, where $k,n\ge 1$. Then $R$ is a semiprimary quasi-Baer $kn$-Peirce ring which is not a Baer ring, where each $R_i$ is isomorphic to $F$ (see \cite{polza}). 
\item Let $A$ be an Artinian $1$-Peirce ring such that $A/J$ is a simple ring, $J^3=0$, and $J^2\ne0$, where $J$ is the Jacobson radical of $A$ (e.g., $A=\Z/8\Z$). Let 
$R=\left[ \begin{array}{ccc}
A & A/J \\
0 & A/J \end{array}\right]$. Then $R$ is a $2$-Peirce ring with $R_1=A$ and $\left[\hbox{Soc}({R_1}_{R_1})\right]^2=0$, and $R_2$ is a simple Artinian ring.
\item Let $A$ be as in (2). Let 
$$R=\left[ \begin{array}{ccc}
A & J^2 & J \\
J^2 & A & J^2\\
J^2 & J & A \end{array}\right].$$ Then $R$ is a $3$-Peirce ring where each $R_i=A$, hence $\left[\hbox{Soc}({R_i}_{R_i})\right]^2=0$ for all~$i$.
\end{enumerate}
\end{examples}

\begin{theorem}
\label{quasibaer} If $R$ is an $n$-{\bf B} ring such that the semiprime factor by the prime radical is quasi-Baer, then $R$ is an $n\times n$ generalized matrix ring with rings having prime factors by the prime radical on the diagonal. 
\end{theorem}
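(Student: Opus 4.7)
The plan is to combine Theorem~\ref{semiprime} with Lemma~\ref{innertrivial}(2) and the standard compatibility of the prime radical with corner rings. Concretely, I would proceed as follows.

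First, since $R$ is an $n$-{\bf B} ring, Theorem~\ref{semiprime} gives $n$ pairwise orthogonal {\bf B}-trivial idempotents $e_1,\dots,e_n\in R$ with sum $1$ such that each $e_iRe_i$ is a $1$-{\bf B} ring, and moreover $R/{\bf B}(R)\cong S_1\oplus\cdots\oplus S_n$, where each $S_i$ is a semiprime $1$-Peirce ring and the image $\overline{e_i}$ is (after relabelling) the identity of $S_i$. In particular, the $e_i$'s provide the generalized matrix representation
$$R=\left[\begin{array}{cccc} e_1Re_1 & e_1Re_2 & \cdots & e_1Re_n \\ e_2Re_1 & e_2Re_2 & \ddots & \vdots \\ \vdots & \ddots & \ddots & e_{n-1}Re_n \\ e_nRe_1 & \cdots & e_nRe_{n-1} & e_nRe_n \end{array}\right].$$

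Second, I would use the hypothesis that $R/{\bf B}(R)$ is quasi-Baer. Since each $\overline{e_i}$ is a central idempotent of $R/{\bf B}(R)$, the summand $S_i\cong \overline{e_i}(R/{\bf B}(R))\overline{e_i}$ is quasi-Baer as well (the right annihilator in $S_i$ of an ideal $X\subseteq S_i$ coincides with $\overline{e_i}\cdot\underline{r}_{R/{\bf B}(R)}(X)$, which is generated by an idempotent). Since $S_i$ is also a $1$-Peirce ring, Lemma~\ref{innertrivial}(2) forces $S_i$ to be a prime ring.

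Third, I would identify $e_iRe_i/{\bf B}(e_iRe_i)$ with $S_i$. For this, the key equality is
$${\bf B}(e_iRe_i)=e_i{\bf B}(R)e_i.$$
The inclusion ``$\supseteq$'' follows from the strong-nilpotence characterization of the prime radical: any element of $e_i{\bf B}(R)e_i$ is strongly nilpotent in $R$, hence a fortiori strongly nilpotent in the subring $e_iRe_i$. The inclusion ``$\subseteq$'' follows because $e_i{\bf B}(R)e_i$ is a two-sided ideal of $e_iRe_i$ whose quotient embeds into the semiprime ring $R/{\bf B}(R)$ as the corner $\overline{e_i}(R/{\bf B}(R))\overline{e_i}$, hence is semiprime, and thus contains the prime radical. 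Consequently $e_iRe_i/{\bf B}(e_iRe_i)\cong S_i$ is prime, which is the desired conclusion for the diagonal rings $R_i=e_iRe_i$.

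The only mildly technical step is the verification ${\bf B}(e_iRe_i)=e_i{\bf B}(R)e_i$; everything else is a repackaging of Theorem~\ref{semiprime}, the quasi-Baer/$1$-Peirce characterization of prime rings in Lemma~\ref{innertrivial}(2), and the passage to central-idempotent corners of a quasi-Baer ring. I expect no significant obstacle beyond careful bookkeeping of the identifications between $\overline{e_i}$ and the identity of $S_i$.
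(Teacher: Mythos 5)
Your proposal is correct and follows essentially the same route as the paper: the paper likewise observes that the quasi-Baer semiprime factor decomposes into $n$ summands which, being quasi-Baer $1$-Peirce rings, are prime by Lemma~\ref{innertrivial}(2), and then concludes via Theorem~\ref{prime}. The only difference is that you verify the corner identification ${\bf B}(e_iRe_i)=e_i{\bf B}(R)e_i$ explicitly, whereas the paper absorbs that step into its citation of Theorem~\ref{prime}.
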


\begin{proof} The condition that the factor by the prime radical is quasi-Baer implies that this factor is a direct sum of prime rings. Therefore the result follows immediately from Theorem \ref{prime}. 
\end{proof}

\begin{proposition}
\label{complete}
Let $R$ be an $n$-Peirce ring. Then $R$ has a complete set of orthogonal idempotents, $\{e_1,\ldots,e_n\}$, and a generalized matrix representation 
$$R\cong \left[ \begin{array}{cccc}
R_1 & M_{12} & \cdots & M_{1n}\\
M_{21} & R_{2} & \ddots & \vdots \\
\vdots & \ddots & \ddots & M_{n-1,n} \\
M_{n1} & \cdots & M_{n,n-1} & R_n \end{array}\right],$$

\noindent where each $R_i=e_iRe_i$ is a $1$-Peirce ring, each $M_{ij}=e_iRe_j$ with $M_{ij}M_{ji}=0_{R_i}$ for all $i\ne j$, and $R_1,\ldots,R_n$ are unique up to isomorphism and permutation. Moreover, if $R$ satisfies any condition which transfers from $R$ to a homomorphic image or to $eRe$, where $e=e^2\in R$, then each $R_i$ also satisfies the condition.
\end{proposition}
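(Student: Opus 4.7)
The plan is to extract the required data from Corollary \ref{structure} and then read off the matrix representation from the associated Peirce decomposition. First I would apply Corollary \ref{structure} to obtain pairwise orthogonal $1$-Peirce idempotents $e_1,\ldots,e_n$ summing to $1$ together with a complete dyadic set $\{\lambda_0,\lambda_1,\ldots,\lambda_{n-1}\}$ of partitions of $\{1,\ldots,n\}$ such that whenever an exceptional block $I\in\lambda_s$ splits into $J,L\in\lambda_{s+1}$, the idempotent $e_J=\sum_{k\in J}e_k$ is Peirce trivial in $e_IRe_I$. Setting $R_i=e_iRe_i$ and $M_{ij}=e_iRe_j$, the decomposition $R=\bigoplus_{i,j}e_iRe_j$ gives the displayed generalized matrix ring, with matrix operations induced by the bimodule structure on the $M_{ij}$.

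The one substantive point to verify is $M_{ij}M_{ji}=0_{R_i}$ for $i\neq j$. Given distinct indices $i$ and $j$, the dyadic tree contains a unique minimal exceptional block $I$ whose two children $J,L$ separate them; say $i\in J$ and $j\in L$. Peirce triviality of $e_J$ in the subring $e_IRe_I$ combined with Proposition \ref{obvi1} (applied inside $e_IRe_I$) forces $e_J(e_IRe_I)e_L(e_IRe_I)e_J=0$, which collapses to $e_JRe_LRe_J=0$ after using $e_Je_I=e_J$ and $e_Le_I=e_L$. Since $e_Je_i=e_i$ and $e_Le_j=e_j$, I obtain $M_{ij}M_{ji}=e_iRe_jRe_i\subseteq e_JRe_LRe_J=0$, as required.

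Uniqueness of the diagonal rings up to isomorphism and permutation is then immediate from Theorem \ref{peirce4}(1) and Corollary \ref{equality}: any other complete set of pairwise orthogonal $1$-Peirce idempotents with sum $1$ has $n$ elements and is carried onto $\{e_1,\ldots,e_n\}$ by an inner automorphism after a suitable permutation, and this inner automorphism restricts to ring isomorphisms between the corresponding corners.

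For the transfer clause, $R_i$ is literally the corner $e_iRe_i$, so any property that passes from $R$ to corners immediately holds for each $R_i$. When the property is only guaranteed to pass to homomorphic images, I would invoke Corollary \ref{structure1}: since ${\frak D}(R)^-=\sum_{i\neq j}e_iRe_j$ is nilpotent and the images $\bar{e}_i$ become pairwise orthogonal central idempotents in $R/{\frak D}(R)^-$, there is a natural ring isomorphism $R/{\frak D}(R)^-\cong R_1\times\cdots\times R_n$, exhibiting each $R_i$ as a homomorphic image of $R$. The principal obstacle is the bookkeeping in the second paragraph, namely using the hierarchy provided by the dyadic tree together with the containments $e_i\le e_J$, $e_j\le e_L$ to deduce $M_{ij}M_{ji}=0$ from a Peirce triviality statement that a priori lives only inside $e_IRe_I$; once this is in hand, the remaining steps merely repackage earlier results.
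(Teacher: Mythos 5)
Your proof is correct, but it is more self-contained than the paper's, which simply records the proposition as a consequence of three citations: \cite[Theorem 5.7(1)]{abivw1} (for the existence of the representation with $M_{ij}M_{ji}=0$), Theorem \ref{peirce4}(1) (for uniqueness), and Lemma \ref{innertrivial} (for the transfer clause). The genuinely different content in your write-up is the second paragraph: instead of invoking the external Theorem 5.7(1) of \cite{abivw1}, you derive $M_{ij}M_{ji}=0$ directly from Corollary \ref{structure} by locating, for each pair $i\neq j$, the first level of the dyadic tree at which $i$ and $j$ are separated into children $J\ni i$ and $L\ni j$ of an exceptional block $I$, and then unwinding inner Peirce triviality of $e_J$ in $e_IRe_I$ (this is immediate from Definition \ref{peirce}; the appeal to Proposition \ref{obvi1} is unnecessary but harmless). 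That argument is sound --- the containments $e_iRe_j\subseteq e_JRe_L$ and $e_jRe_i\subseteq e_LRe_J$ do collapse $M_{ij}M_{ji}$ into $e_J(e_IRe_I)e_L(e_IRe_I)e_J=0$ --- and it buys independence from the external reference at the cost of the combinatorial bookkeeping you flag. For the transfer clause, the paper's route via Lemma \ref{innertrivial}(1) exhibits $R_i$ as the image of the surjective ring homomorphism $x\mapsto e_ixe_i$ (each $e_i$ is inner Peirce trivial precisely because $e_iR(1-e_i)Re_i=\sum_{j\neq i}M_{ij}M_{ji}=0$), whereas you factor through $R/{\frak D}(R)^-\cong R_1\times\cdots\times R_n$; these are the same map in two guises, so nothing is lost or gained there. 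The uniqueness step via Theorem \ref{peirce4}(1) and Corollary \ref{equality} matches the paper.
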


\begin{proof}
This result is a consequence of \cite[Theorem 5.7(1)]{abivw1}, Theorem \ref{peirce4}(1) and Lemma \ref{innertrivial}.
\end{proof}

To indicate the applicability of Proposition \ref{complete}, the following is a list of some of the classes of rings which are closed with respect to homomorphic images or contain $eRe$ 
whenever $e=e^2\in R$ and $R$ is in the following class: right Noetherian, right (semi-)Artinian, PI (i.e., satisfies a polynomial identity), (quasi-)Baer, right (semi-)hereditary, (bi-, $\pi$-, semi-)regular, $I$-ring (i.e., every non-nil right ideal contains a nonzero idempotent), bounded index of nilpotency, right selfinjective, etc. Furthermore, in Proposition \ref{complete}, if $R$ satisfies any of the above conditions and is quasi-Baer, then each $R_i$ is a prime ring satisfying the condition.

We conclude the paper with the following question:
\medskip

\begin{question} If R is an $n$-Pierce, under what conditions is the right classical ring of quotients or the maximal right ring of quotients also a $k$-Peirce ring for some positive integer $k$?
\end{question}

\end{document}